\newtheorem{prethm}{{\bf Theorem}}[section]
\newenvironment{thm}{\begin{prethm}{\hspace{-0.5
em}{\bf.}}}{\end{prethm}}
\newtheorem{prepro}{{\bf Theorem}}
\newtheorem{precor}[prethm]{{\bf Corollary}}
\newenvironment{cor}{\begin{precor}{\hspace{-0.5
em}{\bf.}}}{\end{precor}}
\newtheorem{preconj}[prethm]{{\bf Conjecture}}
\newtheorem{preremark}[prethm]{{\bf Remark}}
\newenvironment{remark}{\begin{preremark}\em{\hspace{-0.5
em}{\bf.}}}{\end{preremark}}
\newtheorem{prelem}[prethm]{{\bf Lemma}}
\newenvironment{lem}{\begin{prelem}{\hspace{-0.5
em}{\bf.}}}{\end{prelem}}
\newtheorem{preque}[prethm]{{\bf Question}}
\newtheorem{preobserv}[prethm]{{\bf Observation}}
\newtheorem{preproposition}[prethm]{{\bf Proposition}}
\newtheorem{preproof}{{\bf Proof.}}
\newtheorem{preprooff}{{\bf Proof}}
\newenvironment{proof}[1]{\begin{preproof}{\rm
#1}\hfill{$\Box$}}{\end{preproof}}
\newtheorem{preproofs}{{\bf Second proof of }}
\newtheorem{preprooft}{{\bf Third proof of }}
\newtheorem{preproofF}{{\bf Proof of}}
\title{\bf\Large 
Toughness and the existence of tree-connected $\{f,f+k\}$-factors
}
\author{{\normalsize{\sc Morteza Hasanvand${}$} }\vspace{3mm}
\\{\footnotesize{${}$\it Department of Mathematical
 Sciences, Sharif
University of Technology, Tehran, Iran}}
{\footnotesize{}}\\{\footnotesize{  $\mathsf{morteza.hasanvand@alum.sharif.edu }$ }}}
\date{}
\begin{document}
\maketitle
\begin{abstract}{
Let $G$ be a graph and  let $f$ be a positive integer-valued function on $V(G)$ satisfying $2m\le f\le b$, where  $b$ and $m$ are two positive integers with $b\ge 4m^2$. In this paper, we show that  if $G$ is $b^2$-tough and $|V(G)|\ge b^2$, then it has an $m$-tree-connected  factor $H$ such that for each vertex $v$, $$d_H(v)\in \{f(v), f(v)+1\}.$$  Next, we generalize this result by  giving sufficient conditions for a tough graph to have a tree-connected factors $H$ such that for each vertex $v$, $d_H(v)\in \{f(v), f(v)+k\}$. As an application, we prove that every $64b(b-a)^2$-tough graph $G$ of order at least $b+1$ with  $ab|V(G)|$ even admits a  connected factor whose degrees lie in the set $\{a,b\}$, where  $a$ and $b$ are two  integers with $2\le a< b <  \frac{6}{5}a$. Moreover, we prove that every $16$-tough graph $G$ of order at least three admits a $2$-connected factor whose degrees lie in the set $\{2,3\}$, provided that $G$ has a $2$-factor with girth at least five. This result confirms a weaker version of a long-standing conjecture due to Chv\'atal (1973).
\\
\\
\noindent {\small {\it Keywords}:
\\
Spanning tree;
tree-connectivity;
spanning Eulerian;
spanning closed trail;
connected factor;
toughness.
}} {\small
}
\end{abstract}
%
%
%
%
%
%
%
%
%
%
%
%
%
%
\section{Introduction}
In this article, all graphs have no loop, but multiple edges are allowed and a simple graph is a graph without multiple edges.
 Let $G$ be a graph. 
The vertex set, the edge set, the minimum degree, the maximum degree, and the number of components of $G$ are denoted by $V(G)$, $E(G)$, $\delta(G)$, $\Delta(G)$, and $\omega(G)$, respectively. 
The degree $d_G(v)$ of a vertex $v$ is the number of edges of $G$ incident to $v$.
The set of edges of $G$ that are incident to $v$ is denoted by $E_G(v)$.
We denote by $d_G(C)$ the number of edges of $G$ with exactly one end in $V(C)$, where $C$ is a subgraph of $G$.
For a set $X\subseteq V(G)$, we denote by $G[X]$ the induced subgraph of $G$ with the vertex set $X$ containing
precisely those edges 
of $G$ whose ends lie in~$X$.
For a set $A$ of integers, an {\bf $A$-factor} is a spanning subgraph with vertex degrees in $A$. 
If for each vertex $v$, $A(v)$ is a set of  integers, an {\bf $A$-factor} is a spanning subgraph $F$ such that for each vertex $v$,
$d_F(v)\in A(v)$.
Let $F$ be a spanning subgraph of $G$.
For  an edge set  $E$, we denote by $F-E$ the graph obtained from $F$ by removing  the edges of $E$ from $F$.
Likewise, we denote by $F+E$ the graph obtained from $F$ by inserting  the edges of $E$ into $F$.
For convenience, we use $e$ instead of $E$ when $E=\{e\}$.
For two edge sets $E_1$ and $E_2$, we also use the notation $E_1+E_2$ for the union of them.
The graph obtained from $G$ by contracting any component of $F$ is denoted by $G/F$. 
The graph  $F$ is said to be {\bf trivial}, if it has no edge. 
Let $S\subseteq V(G)$. 
 The graph obtained from $G$ by removing all vertices of $S$ is denoted by $G\setminus S$.
Denote by $G\setminus [S,F]$ the graph obtained from $G$ by removing all edges incident to the vertices of $S$ except the edges of $F$. 
Note that while the vertices of $S$ are deleted in $G\setminus S$, no vertices are removed in $G\setminus [S, F]$.
Denote by $e_G(S)$ the number of edges of $G$ with both ends in $S$. 
Let $P$ be a partition of $V(G)$.
Denote by $e_G(P)$ the number of edges of $G$ whose ends lie in different parts of $P$.
A graph $G$ is called {\bf$m$-tree-connected}, if it has $m$ edge-disjoint spanning trees. 
In addition, an $m$-tree-connected graph $G$ is called {\bf minimally $m$-tree-connected},
 if $|E(G)|=m(|V(G)|-1)$.
In other words, for any edge $e$ of $G$, the graph $G\setminus e$ is not $m$-tree-connected.
The vertex set of any graph $G$ can be expressed uniquely as a disjoint union of vertex sets of some induced $m$-tree-connected subgraphs.
These subgraphs are called the
{\bf $m$-tree-connected components} of $G$.
For a graph $G$, we define the parameter $\Omega_m(G)=m|P|-e_G(P)$ to measure tree-connectivity, where $P$ is the unique partition of $V(G)$
obtained from the $m$-tree-connected components of $G$.
Note that $\Omega_1(G)$ is the same number of components of $G$, while $\Omega_m(G)$ is less or equal than the number of $m$-tree-connected components of $G$.
The definition implies that
the null 
graph $K_0$ with no vertices is not $m$-tree-connected and $\Omega_m(K_0)=0$.
In this paper, we assume that all graphs are nonnull, except for the graphs that obtained by removing vertices.
We say that a graph $F$ is {\bf $m$-sparse}, if $e_F(S)\le m|S|-m$, for all nonempty subsets $S\subseteq V(G)$.
It is not hard to check that $\Omega_m(F) < \Omega_m(F\setminus e)$, 
 for any edge $e$ of $F$.
Clearly, $1$-sparse graphs are forests.
%
A graph $G$ is $m$-sparse if and only if   its $m$-tree-connected components are minimally $m$-tree-connected.
Note that all maximal $m$-sparse factors of $G$ form the bases of a matroid, see~\cite{Edmonds-1970}.
Note also that several basic tools in this paper for working with sparse and tree-connected graphs can
be obtained using matroid theory. 
Let $t$ be a positive real number, 
a graph $G$ is said to be {\bf $t$-tough}, if $\omega(G\setminus S)\le \max\{1,\frac{1}{t}|S|\}$ for all $S\subseteq V(G)$. 
The {\bf bipartite  index $bi(G)$} of a graph $G$ is the smallest number of all $|E(G)\setminus E(H)|$ taken over all bipartite factors~$H$.
Throughout this article, all variables $k$ and $m$ are positive integers.
%
%
%
%
%
%
%

Recently, the present author~\cite{ ClosedWalks} investigated connected factors with small degrees and established
the following theorem. This result is an improvement several results due to Win (1989)~\cite{Win-1989}, Ellingham and Zha~(2000)~\cite{Ellingham-Zha-2000}, and Ellingham, Nam, and Voss~(2002)~\cite{Ellingham-Nam-Voss-2002}.
\begin{thm}{\rm(\cite{ClosedWalks})}\label{intro:thm:c=3:G[S]}
{Let $G$ be a graph with a factor $F$ of which every component contains at least $c$ vertices with $c \ge 2$.
Let $h$ be a nonnegative integer-valued function on $V(G)$.
If for all $S\subseteq V(G)$,
$$ \omega (G\setminus S) < \sum_{v\in S}\big(\frac{c}{2c-2}h(v)-\frac{1}{c-1}\big)+2+\frac{1}{c-1}\omega(G[S]),$$
then $G$ has a connected factor $H$ containing $F$  such that for each vertex $v$, $d_H(v) \le h(v)+d_F(v)$.
}\end{thm}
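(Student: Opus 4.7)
The plan is to reduce the problem to finding a degree-bounded spanning tree of the contracted graph $G/F$, and then establish existence by a Win-type extremal argument.

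First, setting $S=\emptyset$ in the hypothesis gives $\omega(G)<2$, so $G$ is connected. Any minimal connected factor $H$ containing $F$ has the form $H=F+T$, where $T$ (viewed as a set of edges of $G$ joining distinct $F$-components) corresponds to a spanning tree of the contracted graph $G/F$. Since removing edges from $H$ only decreases vertex degrees, it suffices to find a spanning tree $T$ of $G/F$ such that for each $v\in V(G)$, at most $h(v)$ edges of $T$ are incident to $v$ in $G$.

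Second, I would argue by contradiction. Among all spanning trees of $G/F$, choose $T$ to minimize the potential $\Phi(T)=\sum_{v\in V(G)}\max\{d_T(v)-h(v),0\}$, where $d_T(v)$ counts $T$-edges of $G$ at $v$, and pick a vertex $v_0$ with $d_T(v_0)>h(v_0)$. Build a set $S\subseteq V(G)$ by starting from $v_0$ and closing it under fundamental-cycle swaps: if $v\in S$ and there is an edge $e\in E(G)\setminus T$ whose fundamental cycle passes through a $T$-edge at $v$, then the endpoints of $e$ that would accept an incidence without increasing $\Phi$ are added to $S$. The minimality of $\Phi(T)$ forces that every completed swap would merely shift excess between already-saturated vertices, which yields the structural conclusion that no edge of $G$ leaves $S$ except via $T$ and that every vertex of $S$ is $h$-saturated.

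Third, I would translate this structure into a counting inequality. Each component of $G\setminus S$ is attached to $S$ by a $T$-edge, so $\omega(G\setminus S)$ is bounded above by the number of $T$-incidences at $S$ that leave $S$. Here the hypothesis that every $F$-component contains $\ge c$ vertices enters decisively: each super-vertex of $G/F$ corresponding to an $F$-component meeting $S$ provides $\ge c$ actual vertex slots, and a spanning tree of $G/F$ has degree-sum exactly $2(\omega(F)-1)$, so tree-incidences at $S$ are "shared" with $\ge c-1$ further vertices of the same $F$-component. These two facts yield the coefficient $\tfrac{c}{2c-2}$ on $h(v)$ and the $\tfrac{1}{c-1}\omega(G[S])$ correction accounting for $F$-components that sit partly in $S$ and partly outside. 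Assembling the bound contradicts the displayed inequality of the theorem.

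The main obstacle I expect is making the $S$-closure argument precise, because a single fundamental-cycle swap in $G/F$ corresponds in $G$ to a trade between vertices that may lie in several $F$-components, and the admissible swaps must avoid raising $\Phi$ at any endpoint. Partial $F$-components inside $S$ are especially delicate: they both contribute to $\omega(G[S])$ and account for tree-incidences that neither enter nor leave $S$, and matching these correctly is what produces the exact coefficients in the hypothesis. An alternative route—likely cleaner—would be to phrase the degree-bounded spanning tree problem via matroid union (as foreshadowed in the introduction) and deduce the inequality from a matroid-partition theorem, trading the exchange argument for a longer but more mechanical setup.
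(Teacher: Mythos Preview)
Your overall plan---choose an optimal spanning structure, build a saturated set $S$ via exchange closures, then derive a counting contradiction---is exactly the strategy the paper uses (Theorem~\ref{thm:final:minor-improvement} is the exchange-closure lemma, and Theorem~\ref{thm:gen:tough-enough} carries out the counting for general $m$, with $m=1$ recovering the present statement). Two points in your sketch need repair, however.

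First, the structural conclusion from the closure is misstated. What the exchange argument actually yields (Condition~\ref{condition:final:1} of Theorem~\ref{thm:final:minor-improvement}) is $\omega(G\setminus[S,F])=\omega(H\setminus[S,F])$: every edge of $G$ that joins distinct components of $H\setminus[S,F]$ already has an endpoint in $S$. Your phrasing ``no edge of $G$ leaves $S$ except via $T$'' says something different and is generally false. Second, and more seriously, your counting in step three runs the wrong way. You write that $\omega(G\setminus S)$ is bounded \emph{above} by the number of $T$-incidences leaving $S$; but to contradict the hypothesis (itself an upper bound on $\omega(G\setminus S)$) you need a \emph{lower} bound. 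The paper obtains it as follows: the structural equality together with Lemma~\ref{lem:spanningforest:gen} gives the exact identity $\sum_{v\in S}h(v)=\omega(H\setminus[S,F])-\omega(H)+e_{\mathcal F}(S)$ with $\mathcal F=H\setminus E(F)$, hence $\omega(G\setminus[S,F])=\omega(H)+\sum_{v\in S}h(v)-e_{\mathcal F}(S)$. The $c$-vertex hypothesis enters only through Lemma~\ref{lem:gen}, which yields $\omega(G\setminus[S,F])\le \omega(G\setminus S)+\tfrac{1}{c-1}e_F(S)$; combining this with $e_{\mathcal F}(S)+e_F(S)\le |S|-\omega(G[S])$ and $e_{\mathcal F}(S)\le \tfrac12\sum_{v\in S} h(v)$ is what produces the coefficients $\tfrac{c}{2c-2}$ and $\tfrac{1}{c-1}$. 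Your ``sharing with $\ge c-1$ further vertices'' is a reasonable heuristic for Lemma~\ref{lem:gen}, but as written it does not deliver an inequality in the direction you need.
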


In this paper, we generalize Theorem~\ref{intro:thm:c=3:G[S]} to the following tree-connected version with more complicated arguments. 
The special case $c=2$ of this theorem was former studied in~\cite{ClosedTrails}.
\begin{thm}\label{intro:thm:gen:tough-enough}
{Let $G$ be a graph with a factor $F$. Assume  that
for every $m$-tree-connected component $C$ of $F$, $|V(C)| +\frac{c-1}{2m}d_F(C) \ge  c$ in which $c\ge 2$.
Let $h$ be an integer-valued function on $V(G)$.
If for all $S\subseteq V(G)$,
$$ \Omega_m(G\setminus S) < \sum_{v\in S}\big(\frac{c}{2c-2}h(v)-\frac{m}{c-1}\big)+m+1+\frac{1}{c-1}\Omega_m(G[S]),$$
then $G$ has an $m$-tree-connected factor $H$ containing $F$ 
such that for each vertex $v$, 
$d_H(v)\le h(v)+d_F(v) .$
}\end{thm}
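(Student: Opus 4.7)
The plan is a contradiction argument based on an extremally chosen $m$-tree-connected factor, mirroring the approach to Theorem~\ref{intro:thm:c=3:G[S]} but working with $m$-sparse rather than ordinary graphic matroids.

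First, setting $S = \emptyset$ in the hypothesis yields $\Omega_m(G) < m+1$, so $\Omega_m(G) = m$ and $G$ itself is $m$-tree-connected; hence $m$-tree-connected factors containing $F$ do exist. Suppose, for contradiction, that none of them satisfies $d_H(v) \le h(v) + d_F(v)$ everywhere. Among all such factors $H$, choose one minimizing $b(H) := \sum_v \max(0, d_H(v) - d_F(v) - h(v))$ and, subject to that, with $|E(H)|$ minimum. Since deleting any edge of $E(H) \setminus E(F)$ weakly decreases $b(H)$, the joint minimality forces that every such edge is essential: $H \setminus e$ is no longer $m$-tree-connected. Put $S := \{v : d_H(v) > d_F(v) + h(v)\}$, nonempty by assumption.

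Next, consider the auxiliary graph $J := H \setminus [S, F]$, obtained from $H$ by removing all non-$F$ edges incident to $S$. Its $m$-tree-connected components refine the partition determined by those of $F$, since the only merging edges available are non-$F$ edges of $H$ lying entirely in $V \setminus S$; edge-minimality of $H$ modulo $F$ sharply constrains these merging edges and gives a tight estimate for $\Omega_m(J)$. From the $m$-tree-connected components of $J$, one extracts a partition $P'$ of $V \setminus S$ and applies the identity $\Omega_m(G \setminus S) \ge m|P'| - e_{G \setminus S}(P')$ to lower-bound $\Omega_m(G\setminus S)$; any extra cross-edges of $E(G) \setminus E(H)$ within $V \setminus S$ are then controlled by an augmenting argument showing that, if too many were present, they could be swapped into $H$ to reduce $b(H)$, contradicting the choice of $H$.

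The main technical obstacle is to combine the structural assumption $|V(C)| + \frac{c-1}{2m}d_F(C) \ge c$ on each $m$-tree-connected component $C$ of $F$ with the degree excess $\sum_{v\in S}(d_H(v) - d_F(v)) \ge \sum_{v \in S}(h(v) + 1)$ into a weighted estimate producing the exact constants $\frac{c}{2c-2}$ and $\frac{1}{c-1}$ of the statement. The intended mechanism is to charge each small $F$-component $C$ a deficit $c - |V(C)|$ paid for by $\frac{c-1}{2m}d_F(C)$ external $F$-edges, and to split each excess non-$F$ edge at $S$ between contributions to $\Omega_m(G[S])$ and $\Omega_m(G\setminus S)$ via the factor $\frac{c}{2c-2} = \frac{1}{2}\cdot\frac{c}{c-1}$. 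Arranging the arithmetic so that the resulting inequality $\Omega_m(G\setminus S) \ge \sum_{v\in S}\big(\frac{c}{2c-2}h(v)-\frac{m}{c-1}\big)+m+1+\frac{1}{c-1}\Omega_m(G[S])$ contradicts the hypothesis is the delicate heart of the argument, generalizing the corresponding weighting in~\cite{ClosedWalks} from unweighted $c$-vertex components to the $m$-tree-connected setting where $d_F(C)$ can be nonzero.
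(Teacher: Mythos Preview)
Your extremal choice is dual to the paper's, and this is where the proposal runs into a genuine gap. The paper selects $H$ among $m$-sparse factors containing $F$ that \emph{already satisfy} $d_H\le h+d_F$ everywhere, and minimizes $\Omega_m(H)$; you instead select $H$ among $m$-tree-connected factors and minimize the total excess $b(H)$. In the paper's framework the witnessing set $S$ is produced by a layered construction $V_1\subseteq V_2\subseteq\cdots$ (Theorem~\ref{thm:final:minor-improvement}) which guarantees the key equality $\Omega_m(G\setminus[S,F])=\Omega_m(H\setminus[S,F])$. Your ``augmenting argument'' is meant to play this role, but the obvious swap does not work: if $e=xy\in E(G)\setminus E(H)$ with $x,y\notin S$ joins different $m$-tree-connected components of $J=H\setminus[S,F]$, then the matroid circuit of $H+e$ does contain some $e'\in E(H)\setminus E(F)$ incident to $S$, and $H-e'+e$ is still $m$-tree-connected with the $S$-end of $e'$ losing one degree; but $d_{H}(x)$ goes up by one, so if $d_H(x)=h(x)+d_F(x)$ held exactly, $x$ becomes a new excess vertex and $b(H-e'+e)$ need not decrease. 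Controlling this cascade of ``almost-tight'' vertices is exactly what the recursion in Theorem~\ref{thm:final:minor-improvement} accomplishes, and your proposal offers no substitute.

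The second gap is that the final arithmetic is only described, not performed. In the paper the component hypothesis $|V(C)|+\tfrac{c-1}{2m}d_F(C)\ge c$ enters through Lemma~\ref{lem:gen} (with $\xi=0$), which bounds $\Omega_m(G\setminus[S,F])-\Omega_m(G\setminus S)$ by $\tfrac{1}{c-1}e_F(S)$; this is combined with the counting identity of Lemma~\ref{lem:spanningforest:gen} and the sparsity bound $e_H(S)\le m|S|-\Omega_m(G[S])$ to obtain the constants $\tfrac{c}{2c-2}$ and $\tfrac{1}{c-1}$ in the proof of Theorem~\ref{thm:gen:tough-enough}. Your sketch gestures at a similar weighting but does not show how $\Omega_m(G[S])$ arises, nor does it verify that the inequalities close up with the stated coefficients.
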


In 1973 Chv\'atal~\cite{Chvatal-1973} conjectured that there exists a positive real number $t_0$ such that every $t_0$-tough graph of order at least three admits a Hamiltonian cycle (connected $2$-factor).
In 1989 Win~\cite{Win-1989} gave the first step forward to confirm this conjecture by proving that every $1$-tough graph of order at least three admits a connected $\{1,2,3\}$-factor.
In 2000 Ellingham and Zha~\cite{Ellingham-Zha-2000} improved Win's result
by proving that every $4$-tough graph of order at least three admits a connected $\{2,3\}$-factor.
In Subsection~\ref{subsec:2-connected}, we prove the following stronger assertion for graphs with higher toughness.
\begin{thm}
{Let $G$ be a graph of order at least three. If $G$ is $16$-tough, then it admits a $2$-connected $\{2,3\}$-factor, provided that $G$ admits a $2$-factor with girth at least five.
}\end{thm}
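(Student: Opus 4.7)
The plan is to first apply Theorem~\ref{intro:thm:gen:tough-enough} with $m=1$, $c=5$, $F$ equal to the hypothesised $2$-factor of girth at least $5$, and $h\equiv 1$, to obtain a connected factor $H_0\supseteq F$ with $d_{H_0}(v)\in\{2,3\}$; and then to upgrade $H_0$ to a $2$-connected $\{2,3\}$-factor through a local exchange argument that exploits both the $16$-toughness and the girth hypothesis.

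For the first step, each $1$-tree-connected component of $F$ is a cycle of length at least $5$ with $d_F(C)=0$, so the component hypothesis $|V(C)|\ge 5=c$ holds. The toughness hypothesis becomes
\begin{equation*}
\omega(G\setminus S)<\tfrac{3}{8}|S|+2+\tfrac{1}{4}\omega(G[S])\quad\text{for every }S\subseteq V(G),
\end{equation*}
and follows from $\omega(G\setminus S)\le\max\{1,|S|/16\}$ (the right-hand side is at least $2$ when $S=\emptyset$ and dominates $|S|/16$ for $|S|\ge 1$).

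For the upgrade, any cut vertex $v$ of $H_0$ must have degree $3$: a degree-$2$ vertex uses both its edges along its $F$-cycle, and its cycle-neighbours remain joined in $H_0-v$ through the rest of that cycle. Hence a cut vertex $v$ has a non-$F$ edge $vw$ whose endpoint $w$ lies in a component $D$ of $H_0-v$ disjoint from $v$'s cycle. The idea is to swap $vw$ for a $G$-edge $xy$ with $x\in V(D)$ and $y\in V(H_0)\setminus(D\cup\{v\})$, where $w$ has degree $3$ and both $x$ and $y$ have degree $2$ in $H_0$, so that $H_0-vw+xy$ remains a $\{2,3\}$-factor containing $F$ but with strictly fewer cut vertices. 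Iterating eliminates all cut vertices; since $\Delta\le 3$ rules out cut vertices of degree $\ge 4$ in a $2$-edge-connected graph, the resulting factor is $2$-connected.

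The main obstacle is guaranteeing that the replacement edge $xy$ always exists with the required degree profile, and that the exchange can be ordered so as to terminate. Here the $16$-toughness supplies an abundance of $G$-edges between any two substantial vertex sets, while the girth $\ge 5$ ensures that each $F$-cycle contains enough degree-$2$ vertices in $H_0$ to serve as exchange endpoints; together these two hypotheses drive the argument. A direct application of Theorem~\ref{intro:thm:gen:tough-enough} with $m=2$ is unavailable in this setting: the $2$-tree-connected components of a cycle of length $\ge 5$ are singletons, since the cycle itself is not $2$-tree-connected, with $d_F(v)=2$, so the pointwise bound $|V(C)|+\tfrac{c-1}{4}d_F(C)\ge c$ becomes $1+\tfrac{c-1}{2}\ge c$, forcing $c\le 1$ in contradiction to the requirement $c\ge 2$; moreover no augmentation of $F$ inside the $\{2,3\}$-degree budget can make the cycles $2$-tree-connected, because that would require roughly $2|V(C)|-2$ chord-endpoint incidences across only $|V(C)|$ vertices.
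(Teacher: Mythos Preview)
Your first step is fine, but the second step---the local exchange argument---is only a plan, not a proof. The two assertions it rests on are never established: (i) that a replacement edge $xy$ with $x\in V(D)$, $y\notin V(D)\cup\{v\}$, and $d_{H_0}(x)=d_{H_0}(y)=2$ always exists, and (ii) that the swap $H_0-vw+xy$ strictly decreases some potential. Neither is automatic. For (i), the degree-$2$ vertices of $H_0$ inside $D$ and outside $D$ may be few and may fail to span a $G$-edge; ``$16$-toughness supplies an abundance of edges'' does not by itself locate an edge between two \emph{specified} sparse subsets. For (ii), adding $xy$ can create new cut vertices at $x$, at $y$, or elsewhere along the new path, so the cut-vertex count need not drop. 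Without both ingredients the iteration may not terminate, and the argument is incomplete.

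The paper sidesteps the exchange entirely with a one-line trick that also dissolves the obstruction you correctly identified to using $m=2$. Duplicate every edge of $F$ inside $G$, obtaining $F'\subseteq G'$. A doubled cycle \emph{is} $2$-tree-connected, so the $2$-tree-connected components of $F'$ are precisely the (doubled) cycles, each with $|V(C)|\ge5$ and $d_{F'}(C)=0$; the component hypothesis with $m=2$ and $c=5$ now holds. Theorem~\ref{thm:tough:1-Extend} with $m=2$, $c=5$ (whose toughness condition $\omega(G\setminus S)\le\frac{c-2m}{2m(c-1)}|S|+1=\frac{1}{16}|S|+1$ is exactly $16$-toughness) gives a $2$-tree-connected $H'\supseteq F'$ with $d_{H'}(v)\le d_{F'}(v)+1$. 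Removing one copy of $F$ from $H'$ yields $H$ with $d_H(v)\le d_F(v)+1$; since $H/F=H'/F'$ is $2$-tree-connected and each component of $F$ is a $2$-edge-connected cycle, $H$ is $2$-edge-connected. With $\Delta(H)\le3$ this forces $2$-connectivity. Your diagnosis that $m=2$ fails on the undoubled $F$ was exactly right; the missing idea is that doubling $F$ repairs the component hypothesis at no cost to the degree budget.
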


In 1990  Katerinis~\cite{Katerinis-1990} formulated the following sufficient toughness condition for the existence of $f$-factors.
Next, some sufficient toughness-type conditions for the existence of connected $\{f,f+1\}$-factors were investigated in~\cite{Ellingham-Nam-Voss-2002,Ellingham-Zha-2000,iso-tough}.
\begin{thm}{\rm (\cite{Katerinis-1990}, see~\cite{iso-tough})}\label{intro:thm:f} 
{Let $G$ be a  graph,  let $b$ be a positive integer with $b\ge 2$, and  let $f$ be a positive integer-valued function on $V(G)$ with  $f\le b$. If $G$ is $b^2$-tough and $|V(G)|\ge b^2$, then it has a factor $F$ such that for all vertices $v$, $d_H(v)=f(v)$, except possibly for a vertex $u$ satisfying $d_H(u)=f(u)+1$.
}\end{thm}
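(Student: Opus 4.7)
The plan is to apply Tutte's $f$-factor theorem after a parity adjustment, and to verify the resulting deficiency inequality using the $b^2$-toughness. First, perform a parity reduction: if $\sum_{v \in V(G)} f(v)$ is even set $f' = f$; otherwise pick any vertex $u$ and set $f'(u) = f(u) + 1$ and $f'(v) = f(v)$ for $v \ne u$. Then $1 \le f' \le b + 1$, $\sum_v f'(v)$ is even, and an $f'$-factor of $G$ is precisely the factor the theorem asserts. By Tutte's $f$-factor theorem, it suffices to show that for every pair of disjoint $S, T \subseteq V(G)$,
\[
\delta(S, T) := \sum_{v \in S} f'(v) + \sum_{v \in T}\bigl(d_{G \setminus S}(v) - f'(v)\bigr) - q_{f'}(S, T) \;\ge\; 0,
\]
where $q_{f'}(S, T)$ counts the components $C$ of $G \setminus (S \cup T)$ for which $e_G(V(C), T) + \sum_{v \in V(C)} f'(v)$ is odd.

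The toughness hypothesis provides two standard consequences: $\omega(G \setminus X) \le \max\{1, |X|/b^2\}$ for every $X \subseteq V(G)$, and (taking $X = N_G(v)$) $\delta(G) \ge 2b^2$. These yield $q_{f'}(S, T) \le \max\{1, (|S| + |T|)/b^2\}$ and $d_{G \setminus S}(v) \ge 2b^2 - |S|$ for each $v \in T$, while the hypothesis $|V(G)| \ge b^2$ ensures the toughness component bound is not vacuous in the relevant regime.

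Verification of $\delta(S, T) \ge 0$ then proceeds by case analysis on $|S|$. The case $S = T = \emptyset$ reduces to $q_{f'}(\emptyset, \emptyset) = 0$, which holds since $\sum_v f'(v)$ is even. When $|S| \le 2b^2 - b - 1$ the degree bound gives $d_{G \setminus S}(v) - f'(v) \ge 0$ for every $v \in T$, so the $T$-sum is pointwise nonnegative; then $\sum_{v \in S} f'(v) \ge |S|$ together with the slack in the $T$-sum dominates $q_{f'}(S, T)$ after a short calculation using $b \ge 2$. When $|S| > 2b^2 - b - 1$ individual vertices of $T$ may have small residual degree and the $T$-sum can become negative, so one must instead control $\sum_{v \in T} d_{G \setminus S}(v)$ by an aggregate edge count from $T$ into the components of $G \setminus (S \cup T)$, exploiting that $G \setminus S$ itself has at most $|S|/b^2$ components.

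The main obstacle is this last regime. The key observation is that any singleton component $\{w\}$ of $G \setminus (S \cup T)$ counted by $q_{f'}$ satisfies $d_G(w) \le |S| + |T| - 1$, so $\delta(G) \ge 2b^2$ forces $|S| + |T| \ge 2b^2 + 1$, giving the slack needed against the toughness bound on $q_{f'}$; non-singleton components meanwhile contribute many edges to $T$ which can be absorbed into $\sum_{v \in T} d_{G \setminus S}(v)$. Balancing these contributions against the component count and the bound $f' \le b+1$ is the delicate part, and it is precisely here that the two instances of $b^2$ (toughness threshold and order bound) interact in a nontrivial way to close the argument.
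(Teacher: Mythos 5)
This statement is not proved in the paper at all: it is quoted as a known result of Katerinis \cite{Katerinis-1990} (in the form given in \cite{iso-tough}), so there is no in-paper argument to compare against. Your overall route --- reduce to an $f'$-factor by a single parity adjustment, apply Tutte's $f$-factor theorem, and verify the deficiency inequality using the toughness bound on $q_{f'}(S,T)\le\omega(G\setminus(S\cup T))$ together with a minimum-degree bound extracted from toughness --- is indeed the standard way such theorems are proved, and in spirit it matches Katerinis's original method. However, as written the proposal has genuine gaps. First, the claimed bound $\delta(G)\ge 2b^2$ is false under the stated hypotheses: taking $X=N_G(v)$ only yields $\omega(G\setminus X)\ge 2$ when some vertex lies outside $N_G(v)\cup\{v\}$, so if $v$ is adjacent to all other vertices (e.g.\ $G=K_{b^2}$, which is $t$-tough for every $t$ and satisfies $|V(G)|\ge b^2$) one gets only $d_G(v)=|V(G)|-1\ge b^2-1$. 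Your case split at $|S|\le 2b^2-b-1$, which relies on $d_{G\setminus S}(v)\ge 2b^2-|S|\ge f'(v)$, therefore does not go through as stated and needs either the corrected bound $\delta(G)\ge\min\{2b^2,|V(G)|-1\}$ or a separate treatment of near-complete graphs.

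Second, and more seriously, the decisive regime $|S|>2b^2-b-1$ is not actually verified: you identify the difficulty (the $T$-sum may be negative, singleton components force $|S|+|T|$ to be large, non-singleton components send many edges into $T$) and then state that balancing these contributions ``is the delicate part'' where the two occurrences of $b^2$ ``interact in a nontrivial way to close the argument.'' That balancing is precisely the content of the proof; without an explicit chain of inequalities showing $\delta(S,T)\ge 0$ in this regime, what you have is a plan rather than a proof. To make this a complete argument you would need to carry out the aggregate edge count (bounding $\sum_{v\in T}d_{G\setminus S}(v)$ from below via the components of $G\setminus(S\cup T)$ and the bound $\omega(G\setminus S)\le|S|/b^2$) and exhibit the final inequality in terms of $|S|$, $|T|$, $b$, and $|V(G)|\ge b^2$; alternatively, simply cite Katerinis's theorem as the paper does.
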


 In Section~\ref{sec:f,f+1}, we apply Katerinis's result to give  a sufficient toughness condition for the existence of $m$-tree-connected$\{f,f+1\}$-factors as mentioned in the abstract. Moreover, we apply it together with the following recent result to give a sufficient toughness condition for a graph to have a tree-connected $\{f,f+k\}$-factor.

\begin{thm}{\rm (\cite{complementary})}\label{intro:thm:complementary}
{Let $G$ be a $(2m+2m_0+6k)$-tree-connected graph satisfying $bi(G)\ge k-1$ and $k>  m+m_0\ge 0$, and let $f$ be a positive integer-valued function on $V(G)$. 
If for each vertex $v$, $f(v)+m_0\le \frac{1}{2}d_G(v)\le f(v)+k-m$,
then $G$ has an $m$-tree-connected factor $H$ such that its complement is $m_0$-tree-connected and for each vertex $v$,
$$d_H(v)\in \{f(v),f(v)+k\},$$ provided that $(k-1)\sum_{v\in V(G)}f(v)$ is even.
}\end{thm}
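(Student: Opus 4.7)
The plan is to separate the proof into two phases: first reserve enough edge-disjoint spanning trees on each side to handle the tree-connectivity requirements \emph{a priori}, and then use a modular factor theorem to tune the degrees into $\{f, f+k\}$. The connectivity constant $2m + 2m_0 + 6k$ is calibrated precisely so that $m$ spanning trees can be earmarked for $H$, $m_0$ spanning trees for $G - H$, and $6k$ worth of tree-connectivity survives in the leftover subgraph for the degree-adjustment step.

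In the first phase, apply the Nash-Williams/Tutte theorem to extract $m$ edge-disjoint spanning trees whose union $A$ will sit inside $H$, and then $m_0$ more edge-disjoint spanning trees from the remainder whose union $B$ will sit inside $G - H$. The residual graph $R = G - E(A) - E(B)$ remains $(m + m_0 + 6k)$-tree-connected, so henceforth \emph{any} partition of $E(R)$ between $H$ and its complement automatically extends $A$ to an $m$-tree-connected $H$ and $B$ to an $m_0$-tree-connected $G - H$. The residual task becomes: choose $F \subseteq E(R)$ with $d_F(v) \in \{\phi(v),\phi(v)+k\}$ for all $v$, where $\phi(v) := f(v) - d_A(v)$. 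Under this reduction, the assumed window $f(v) + m_0 \le \tfrac{1}{2} d_G(v) \le f(v) + k - m$ converts into a symmetric window of width $k$ around $\tfrac{1}{2} d_R(v)$, placing the problem in the natural regime for a $\{\phi,\phi+k\}$-factor theorem.

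In the second phase, I would invoke a Tutte/Lov\'asz-style $\{\phi, \phi+k\}$-factor theorem for highly tree-connected graphs, exploiting the $6k$-tree-connectivity slack in $R$ to perform local edge-splits and swaps while preserving the target structure. The condition that $(k-1)\sum_v f(v)$ is even serves as the global compatibility (parity) condition; it is automatic when $k$ is odd, while when $k$ is even the assumption $bi(G) \ge k - 1$ supplies the required odd edges to realise the local parity toggles. The hardest step, I expect, is exactly this coordination: ensuring that a sequence of local swaps in $R$ lands every vertex's degree in $\{\phi(v),\phi(v)+k\}$ simultaneously, with the bipartite-index hypothesis playing the role of the modular feasibility condition analogous to the parity condition in Tutte's $f$-factor theorem, and with tree-connectivity of both sides preserved because the reservations $A$ and $B$ are never touched.
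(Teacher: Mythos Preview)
The paper does not actually prove this theorem; it is quoted from~\cite{complementary} and invoked as a black box in Section~5 and Section~6. So there is no in-paper argument to compare your sketch against.

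Evaluating the sketch on its own terms, Phase~1 has a concrete gap. When you reserve unions $A$ and $B$ of $m$ and~$m_0$ edge-disjoint spanning trees, the vertex degrees $d_A(v)$ and $d_B(v)$ are completely uncontrolled: a single vertex may have arbitrarily large degree inside a spanning tree. Your claimed conversion of the hypothesis $f(v)+m_0 \le \tfrac{1}{2}d_G(v) \le f(v)+k-m$ into a usable window on the residual graph $R$ therefore fails. Writing $\phi(v)=f(v)-d_A(v)$ and computing directly,
\[
\tfrac{1}{2}d_R(v)-\phi(v)\;=\;\bigl(\tfrac{1}{2}d_G(v)-f(v)\bigr)+\tfrac{1}{2}\bigl(d_A(v)-d_B(v)\bigr),
\]
and while the first bracket lies in $[m_0,\,k-m]$ by hypothesis, the second bracket can be arbitrarily large in either direction for a bad choice of~$A$ and~$B$. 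So $\phi$ need not land in the regime of any $\{\phi,\phi+k\}$-factor theorem on~$R$; indeed $\phi(v)$ can be negative. A second, related problem is that the bipartite-index hypothesis $bi(G)\ge k-1$ need not survive passage to~$R$: the edges of $A\cup B$ may contain an entire minimum odd-edge-transversal, leaving $R$ bipartite.

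To salvage the outline you would have to choose $A$ and $B$ with per-vertex degree bounds (something like $d_A(v)\le 2m$ and $d_B(v)\le 2m_0$) \emph{and} keep $k-1$ odd-cycle edges outside $A\cup B$; neither comes for free from Nash--Williams/Tutte. The argument in~\cite{complementary} presumably intertwines the tree-connectivity construction with the modular degree adjustment rather than decoupling them, precisely because a naive reservation step wrecks the degree window.
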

%
%
%
%
%
\section{Preliminary results}
\label{sec:Generalizations}
Here, we state a fundamental theorem for finding tree-connected factors with small degrees which is similar to Theorem 3.1 in~\cite{ClosedTrails} which  provides a tree-connected version for Theorem 1 in~\cite{Ellingham-Zha-2000}.
Before doing so, let us state some basic tools for working with sparse graphs and tree-connected graphs which are as well-known in terms of matroid theory, see \cite{ClosedTrails}.
\begin{lem}\label{lem:add-e}
{Let $G$ be a graph with an $m$-sparse factor $F$.
If $e\in E(G)\setminus E(F)$ joins different tree-connected components of $F$, then $F+e$ is still $m$-sparse.
}\end{lem}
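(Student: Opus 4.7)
The plan is to verify the defining $m$-sparseness inequality $e_{F+e}(S) \le m|S|-m$ directly for every nonempty $S \subseteq V(G)$, splitting into two cases according to whether $e$ has both endpoints in $S$.

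In the easy case, where at least one endpoint of $e$ lies outside $S$, the addition of $e$ does not change the edge count inside $S$, so $e_{F+e}(S)=e_F(S)\le m|S|-m$ by the $m$-sparseness of $F$, and nothing further is needed.

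The substantive case is when both endpoints of $e$ lie in $S$. Then $e_{F+e}(S)=e_F(S)+1$, so I would aim to show $e_F(S)\le m|S|-m-1$ by contradiction. Assume equality $e_F(S)=m|S|-m$ holds. Then the induced subgraph $F[S]$ is itself $m$-sparse (sparseness is inherited by induced subgraphs) and has exactly $m(|S|-1)$ edges. I would then invoke the Nash--Williams forest decomposition theorem (equivalently, the matroid union characterisation that the paper has already flagged before stating the lemma): $m$-sparseness forces $F[S]$ to decompose into $m$ edge-disjoint forests, and a simple edge count forces each such forest to be a spanning tree of $F[S]$. Hence $F[S]$ is $m$-tree-connected, so $S$ lies inside a single $m$-tree-connected component of $F$. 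But this puts both endpoints of $e$ in the same $m$-tree-connected component, contradicting the hypothesis on $e$.

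The only non-routine ingredient is the implication ``$m$-sparse graph on $n$ vertices with exactly $m(n-1)$ edges is $m$-tree-connected'', and the paper explicitly notes that these foundational facts follow from matroid theory, so I would cite them rather than re-derive them; the rest is a straightforward case split. Consequently I do not anticipate a genuine obstacle here, and the lemma falls out of the definition together with this one structural fact.
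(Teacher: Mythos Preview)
Your argument is correct. The case split is the natural one, and the key step---showing that $e_F(S)=m|S|-m$ forces $F[S]$ to be $m$-tree-connected, hence $S$ to lie in a single $m$-tree-connected component of $F$---is exactly the structural fact needed, and the paper already flags it (in the introduction, where it notes that a graph is $m$-sparse if and only if its $m$-tree-connected components are minimally $m$-tree-connected, and that these facts come from matroid theory).

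There is nothing to compare against, however: the paper does not prove Lemma~\ref{lem:add-e} at all. It is stated, together with Lemma~\ref{lem:xGy-exchange}, as a ``basic tool'' that is ``well-known in terms of matroid theory'' with a pointer to~\cite{ClosedTrails}; only Lemma~\ref{lem:X-Y} receives an explicit proof. So your proposal supplies a direct, self-contained argument where the paper defers to a citation. In matroid language the lemma is the trivial statement that adding an element not in the closure of an independent set keeps it independent; your write-up unpacks this in the specific setting of $m$-sparse graphs, which is perfectly appropriate.
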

\begin{lem}\label{lem:xGy-exchange}
{Let $G$ be a graph with an $m$-sparse factor $F$.
If $xy\in E(G)\setminus E(F)$ and $Q$ is a minimal $m$-tree-connected subgraph of $F$ including $x$ and $y$, then for every $e \in E(Q)$, the graph $F-e+xy$ remains $m$-sparse.
}\end{lem}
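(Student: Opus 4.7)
The plan is to proceed by contradiction. Assume $F-e+xy$ is not $m$-sparse, so that some nonempty $S\subseteq V(G)$ satisfies $e_{F-e+xy}(S)\ge m|S|-m+1$. Writing
\[
e_{F-e+xy}(S)=e_F(S)+\mathbf{1}[\{x,y\}\subseteq S]-\mathbf{1}[e\in E(G[S])]
\]
and using the $m$-sparsity bound $e_F(S)\le m|S|-m$, I would deduce that the only way to overshoot by one is to have $\{x,y\}\subseteq S$, $e\notin E(G[S])$, and $e_F(S)=m|S|-m$ exactly.

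Next I would record the subclaim that any $m$-sparse graph on $n$ vertices with exactly $m(n-1)$ edges is $m$-tree-connected: for any partition $V_1,\dots,V_p$, applying $e(V_i)\le m|V_i|-m$ to each part yields $e(P)=m(n-1)-\sum_i e(V_i)\ge m(p-1)$, so Nash--Williams' theorem applies. Applied to $F[S]$, this shows $F[S]$ is itself (minimally) $m$-tree-connected.

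The heart of the argument is a contradiction with the minimality of $Q$. Set $W=V(Q)\cap S$ and $U=V(Q)\setminus S$; note $\{x,y\}\subseteq W$. I would consider the subgraph $H=F[S]\cup Q$ of $F$, which is therefore $m$-sparse. Since $Q\subseteq F$, the intersection $E(F[S])\cap E(Q)$ is precisely $E(Q[W])$, and inclusion--exclusion gives
\[
|E(H)|=m(|S|-1)+m(|V(Q)|-1)-e_Q(W),
\]
while $|V(H)|=|S|+|U|$. The sparsity inequality $|E(H)|\le m(|V(H)|-1)$ collapses after cancellation to $e_Q(W)\ge m(|W|-1)$. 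Combining this with the reverse inequality $e_Q(W)\le m|W|-m$, coming from $m$-sparsity of $Q$, forces equality; by the subclaim, $Q[W]$ is $m$-tree-connected.

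Using $|E(Q)|=m(|V(Q)|-1)$ (valid since a minimal $m$-tree-connected subgraph is minimally $m$-tree-connected), the edge deficit $|E(Q)|-|E(Q[W])|=m|U|$ is positive whenever $U\ne\emptyset$. Hence $Q[W]$ would be a proper $m$-tree-connected subgraph of $Q$ still containing $\{x,y\}$, contradicting minimality. So $U=\emptyset$, $V(Q)\subseteq S$, and both endpoints of $e\in E(Q)$ lie in $S$, contradicting $e\notin E(G[S])$. The main obstacle I anticipate is the inclusion--exclusion bookkeeping for $|E(H)|$ and checking the identification $E(F[S])\cap E(Q)=E(Q[W])$; once the algebra collapses correctly, the saturation of the edge count on $W$ automatically promotes $Q[W]$ to $m$-tree-connectedness and the minimality of $Q$ delivers the contradiction.
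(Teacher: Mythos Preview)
Your proof is correct. The paper does not actually prove this lemma; it states Lemmas~\ref{lem:add-e} and~\ref{lem:xGy-exchange} without proof, remarking only that they ``are as well-known in terms of matroid theory'' and citing~\cite{ClosedTrails}. So there is no proof in the paper to compare against line by line.

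What the paper alludes to is the standard matroid argument: the edge sets of $m$-sparse factors are exactly the independent sets of the $m$-fold union of the cycle matroid of $G$, and in any matroid, if $I$ is independent and $I+f$ is dependent, then $I+f$ contains a unique circuit $C$ through $f$, and $I+f-e$ is independent for every $e\in C$. Here the circuit through $xy$ in $F+xy$ is precisely $E(Q)\cup\{xy\}$, which immediately gives the conclusion. Your argument is a direct, matroid-free unpacking of this: you locate a violating set $S$, force $e_F(S)=m|S|-m$, and then use inclusion--exclusion on $F[S]\cup Q$ together with the Nash--Williams criterion to show $Q[W]$ is $m$-tree-connected, contradicting minimality of $Q$. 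This is a genuinely self-contained combinatorial proof and has the advantage of not presupposing the matroid structure; the matroid route is shorter but imports more machinery.

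Two small remarks. First, you state that $F[S]$ is $m$-tree-connected after proving the subclaim, but you never use that fact---only the edge count $e_F(S)=m(|S|-1)$ enters the inclusion--exclusion. Second, when you invoke sparsity of $Q$ on $W$ you implicitly use $W\neq\emptyset$; this is fine since $x,y\in W$, but it is worth saying explicitly. Neither affects correctness.
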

\begin{lem}\label{lem:X-Y}
{Let $G$ be a graph with two $m$-sparse factors $F$ and $F_0$. If for a vertex set $X$,  
$F[X]$ is  $m$-tree-connected, then $F-E(F[X])+E(F_0[X])$ remains $m$-sparse.
}\end{lem}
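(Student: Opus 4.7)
The plan is to verify the $m$-sparsity condition $e_{F'}(S)\le m|S|-m$ directly for every nonempty $S\subseteq V(G)$, where $F'=F-E(F[X])+E(F_0[X])$. The critical preliminary observation is that $e_F(X)=m|X|-m$: the lower bound $e_F(X)\ge m(|X|-1)$ comes from $F[X]$ being $m$-tree-connected (so it contains $m$ edge-disjoint spanning trees of $X$), and the upper bound is the $m$-sparsity of $F$. Thus $F[X]$ is automatically minimally $m$-tree-connected.

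Fix a nonempty $S\subseteq V(G)$ and set $Y=S\cap X$ and $Z=S\setminus X$. Only edges with both ends in $Y$ are altered when passing from $F$ to $F'$, so a direct count gives
$$ e_{F'}(S)=e_F(S)-e_F(Y)+e_{F_0}(Y)=e_F(Y,Z)+e_F(Z)+e_{F_0}(Y). $$
I would bound the first two terms by applying $m$-sparsity of $F$ to $X\cup Z$, which is nonempty since $X$ is:
$$ e_F(X)+e_F(X,Z)+e_F(Z)=e_F(X\cup Z)\le m(|X|+|Z|)-m. $$
Substituting $e_F(X)=m|X|-m$ reduces this to $e_F(X,Z)+e_F(Z)\le m|Z|$, and hence $e_F(Y,Z)+e_F(Z)\le m|Z|$ because $Y\subseteq X$. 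For the last term, when $Y\ne\emptyset$, $m$-sparsity of $F_0$ yields $e_{F_0}(Y)\le m|Y|-m$. Summing,
$$ e_{F'}(S)\le m|Z|+m|Y|-m=m|S|-m, $$
and the case $Y=\emptyset$ is immediate since then $e_{F'}(S)=e_F(S)$.

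There is no real obstacle in this argument; the only subtlety is that applying $m$-sparsity of $F$ on $S$ alone does not suffice, because $e_{F_0}(Y)$ can exceed $e_F(Y)$, and the tree-connectivity hypothesis must be invoked through the larger set $X\cup Z$. The extra ``budget'' $m|Y|-m$ supplied by the identity $e_F(X)=m|X|-m$ is exactly what absorbs $e_{F_0}(Y)$. As the paper notes, this lemma is really a basis-exchange property of the $m$-sparse matroid, but the direct counting above avoids invoking that machinery.
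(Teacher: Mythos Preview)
Your proof is correct and is essentially the same argument as the paper's: both use $e_F(X)=m(|X|-1)$, apply $m$-sparsity of $F$ to $X\cup S$ (the paper phrases this via the supermodular inequality $e_F(S)-e_F(S\cap X)\le e_F(S\cup X)-e_F(X)$, which your decomposition into $Y,Z$ unpacks explicitly), and then bound $e_{F_0}(S\cap X)$ by sparsity of $F_0$. The only difference is notational.
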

\begin{proof}
{Set $H=F-E(F[X])+E(F_0[X])$ and let $A$ be a nonempty subset of $V(H)$.
If $X\cap A$ is empty, then $e_{H}(A)= e_{F}(A)\le m(|A|-1)$.
So, suppose that $X\cap A$ is not empty.
Obviously, $e_{F}(A)-e_{F}(X\cap A) \le e_{F}(X\cup A)-e_F(X)$.
Since $e_{F}(X)=m(|X|-1)$ and $e_{F}(X\cup  A)\le m(|X| +|A|-|X\cap A|-1)$, we must have 
$e_{F}(A)-e_{F}(X\cap A)\le m(|A|-|X\cap A|)$.
Since $e_{F_0}(X\cap A )\le m(|A\cap X|-1)$, we therefore have $e_{H}(A)= e_{F}(A)-e_{F}(X\cap A)+e_{F_0}(X\cap A )\le m(|A|-1)$. Hence $H$ is $m$-sparse and the proof is completed.
}\end{proof}
Now, we are ready to prove the main result of this section.
\begin{thm}\label{thm:final:minor-improvement}
{Let $G$ be a graph with an $m$-sparse factor $F$ 
and let $h$ be a nonnegative integer-valued function on $V(G)$.
 If $H$ is an $m$-sparse factor of $G$ containing $F$ with $te(H,h+d_F)=0$ and with the minimum $\Omega_m(H)$, then there exists a subset $S$ of $V(G)$ with the following properties:
\begin{enumerate}{
\item $\Omega_m(G\setminus [S,F])=\Omega_m(H\setminus [S,F])$.\label{condition:final:1}
\item For each vertex $v$ of $S$, $d_H(v)= h(v)+d_F(v)$.\label{condition:final:2}
}\end{enumerate}
}\end{thm}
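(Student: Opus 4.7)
The plan is to take $S$ to be exactly the saturated vertices of $H$,
\[ S = \{v \in V(G) : d_H(v) = h(v) + d_F(v)\}, \]
so that condition 2 holds by construction. Since $H \setminus [S,F] \subseteq G \setminus [S,F]$ and adding edges can only weakly decrease $\Omega_m$, the direction $\Omega_m(G \setminus [S,F]) \le \Omega_m(H \setminus [S,F])$ is automatic. Condition 1 therefore reduces to showing that every edge $e = xy \in E(G)\setminus E(H)$ with $x,y \notin S$ lies within a single $m$-tree-connected component of $H \setminus [S,F]$.

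I would argue this by contradiction, splitting on whether $x$ and $y$ already lie in the same $m$-tree-connected component of $H$. If they lie in different $m$-tree-connected components of $H$, then Lemma~\ref{lem:add-e} gives that $H + e$ is $m$-sparse; it still contains $F$, and since $x,y \notin S$ each had at least one unit of slack, $te(H+e, h+d_F)=0$. The strict monotonicity of $\Omega_m$ under edge removal in $m$-sparse graphs then gives $\Omega_m(H+e) < \Omega_m(H)$, contradicting the minimality of $\Omega_m(H)$.

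In the remaining case, where $x, y$ lie in the same $m$-tree-connected component of $H$, let $Q$ be a minimal $m$-tree-connected subgraph of $H$ containing both of them. Since $x$ and $y$ are separated in $H \setminus [S,F]$, the subgraph $Q$ must use an edge $f = su$ with $s \in S$ and $f \notin E(F)$; otherwise $Q$ would lie entirely in $H \setminus [S,F]$, connecting $x$ to $y$ there. Lemma~\ref{lem:xGy-exchange} then yields that $H^* = H - f + e$ is $m$-sparse, and a routine check at the four vertices $s,u,x,y$ gives $F \subseteq H^*$ and $te(H^*, h+d_F) = 0$; since $|E(H^*)| = |E(H)|$ and both are $m$-sparse, $\Omega_m(H^*) = \Omega_m(H)$, so $H^*$ is only another $\Omega_m$-minimizer rather than an immediate contradiction.

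To close this case, I would sharpen the extremal choice of $H$ by a secondary criterion — for instance, among all $\Omega_m$-minimizers, additionally requiring $H$ to maximize $|S(H)|$, or more robustly a lexicographically extremal potential that takes into account the local tree-connected structure around $Q$ — that is strictly improved by the swap $H \to H^*$. The main obstacle lies here: the multiset of deficiencies at the four swap vertices $\{s,u,x,y\}$ transforms from $\{0,c,a,b\}$ to $\{1,c+1,a-1,b-1\}$, which is a balanced permutation, so any purely symmetric function of deficiencies is preserved; the correct tie-breaker must use the tree-connected-component structure asymmetrically (or iterate swaps along an alternating chain until a Case-1 situation is produced), and this technical step is what makes the argument more complicated than the closed-trail analogue of \cite{ClosedTrails}.
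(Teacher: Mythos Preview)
Your analysis correctly isolates the hard case and honestly admits the gap, but the gap is real: no secondary extremal criterion on $H$ alone (size of $S(H)$, lexicographic deficiencies, etc.) will close it, for exactly the reason you note---the swap $H\to H^*$ permutes deficiencies in a balanced way, so symmetric potentials are preserved, and a single swap need not produce a Case~1 configuration. The ``iterate swaps along an alternating chain'' idea is on the right track but, as stated, is not a proof: you would need to show such a chain terminates in Case~1 rather than cycling, and this is where the argument becomes substantial.

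The paper's proof avoids picking a single $H$ with tie-breakers. Instead it builds $S$ as the stable limit of an increasing chain $V_0=\emptyset\subseteq V_1\subseteq\cdots$, where $v\in V_n\setminus V_{n-1}$ means $v$ is saturated not just in $H$ but in \emph{every} $\Omega_m$-minimizer $H'$ that agrees with $H$ outside the $m$-tree-connected component of $H\setminus[V_{n-1},F]$ containing $v$. The key claim---that any edge $xy\in E(G)\setminus E(H)$ crossing components of $H\setminus[V_{n-1},F]$ forces $x\in V_n$ or $y\in V_n$---is proved by induction on $n$: if $x,y\notin V_n$, witnesses $H_x,H_y$ with slack at $x,y$ are glued via Lemma~\ref{lem:X-Y} into a new minimizer $H'$ that has slack at the pivot vertex $z\in V_{n-1}\setminus V_{n-2}$, contradicting $z\in V_{n-1}$. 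This recursive definition is precisely what replaces the missing tie-breaker: rather than one extremal $H$, it quantifies over a controlled family of minimizers at each level, and the inductive structure is what makes the ``alternating chain'' terminate.
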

\begin{proof}
{Define $V_0=\emptyset $. 
For any $S\subseteq V(G)$ and  $u\in V(G)\setminus S$, 
let $\mathcal{A}(S, u)$ be the set of all $m$-sparse factors 
$H^\prime $ of $G$ containing $F$
with $te(H',h+d_F)=0$ such that 
 $\Omega_m(H')=\Omega_m(H)$, $H'[X]$ is $m$-tree-connected, and
$H^\prime$ and $H$ have the same edges, except for some of the edges of $G$ whose ends are in $X$, 
where $H[X]$ is the $m$-tree-connected component of $H\setminus [S,F]$ containing $u$.
Now, for each integer $n$ with $n\ge 2$,  recursively define $V_n$ as follows:
$$V_n=V_{n-1} \cup \{\, v\in V(G)\setminus V_{n-1} \colon \, d_{H^\prime }(v)= h(v)+d_F(v) \text{\, for all\, }H^\prime \in \mathcal{A}(V_{n-1},v)\,\}.$$
 Now, we prove the following claim.
%
\vspace{2mm}\\
{\bf Claim.} 
Let $x$ and $y$ be two vertices in different m-tree-connected components of $H\setminus [V_{n-1},F]$.
If $xy\in E(G)\setminus E(H)$, then $x\in V_{n}$ or $y\in V_{n}$.
\vspace{2mm}\\
{\bf Proof of Claim.} 
By induction on $n$.
Suppose, to the contrary,  that vertices $x$ and $y$ are in different $m$-tree-connected components of $H\setminus [V_{n-1},F]$,
$xy\in E(G)\setminus E(H)$, 
and $x,y\not \in V_{n}$. 
Let $X$ and $Y$ be the vertex sets  of the $m$-tree-connected components of $H\setminus [V_{n-1},F]$
containing $x$ and $y$, respectively. 
Since $x,y\not\in V_{n}$,
 there exist
 $H_x\in \mathcal{A}(V_{n-1},x)$ and
 $H_y\in \mathcal{A}(V_{n-1},y)$ with $d_{H_x}(x)< h(x)+d_F(x)$ and $d_{H_y}(y)< h(y)+d_F(y)$. 
 For $n=1$, define $H'$ to be the factor of $G$ containing $F$ with
 $$E(H')=E(H)+xy
-E(H[X])+E(H_x[X])
-E(H[Y])+E(H_y[Y]).$$
Since  $xy$ joins different $m$-tree-connected components of $H$, by Lemma~\ref{lem:add-e}, $H'$ is still $m$-sparse and so
$\Omega_m(H') < \Omega_m(H)$.
Since  $te (H',h+d_F)=0$, we arrive at a contradiction.
Now, suppose $n\ge 2$.
By the induction hypothesis,
 $x$ and $y$ are in the same $m$-tree-connected component 
of $H\setminus [V_{n-2},F]$ with the vertex set $Z$ so that $X\cup Y \subseteq Z$.
Let $Q$ be a minimal $m$-tree-connected subgraph
of $H$ including $x$ and $y$.
Notice that the vertices of $Q$ lie in $Z$ and also $Q$
includes at least a vertex $z$ of $Z\cap  V_{n-1}$ with $zz' \in E(Q)\setminus E(F)$.
By Lemma~\ref{lem:xGy-exchange},
the graph $H-zz'+xy$ is still $m$-sparse.
Now, let $H'$ be the factor of $G$  containing  $F$ with
 $$E(H')=E(H)-zz'+xy
-E(H[X])+E(H_x[X])
-E(H[Y])+E(H_y[Y]).$$
According to Lemma~\ref{lem:X-Y}, one can easily conclude that $H'$ is also $m$-sparse. 
Since $|E(H')|=|E(H)|$, we must have  $\Omega_m(H') = \Omega_m(H)$.
For each $v\in V(H')$, we have
$$d_{H'}(v)\le  
 \begin{cases}
d_{H_v}(v)+1,	&\text{if  $v\in \{x,y\}$};\\
d_{H}(v),	&\text{if  $v=z'$},
\end {cases}
\quad \text{ and }\quad
 d_{H'}(v)= 
 \begin{cases}
d_{H_x}(v),	&\text{if  $v\in X\setminus \{x,z'\}$};\\
d_{H_y}(v),	&\text{if  $v\in Y\setminus \{y,z'\}$};\\
d_{H}(v),	&\text{if  $v\notin X\cup Y\cup \{z,z'\}$}.
\end {cases}$$
It is not hard to check that $d_{H'}(z)<d_{H}(z)\le h(z)+d_F(z)$ and $H'$ lies in $ \mathcal{A}(V_{n-2}, z)$.
Since $z\in V_{n-1}\setminus V_{n-2}$, we arrive at a contradiction.
Hence the claim holds.
%
%
%

Obviously, there exists a positive integer $n$ such that and $V_1\subseteq \cdots\subseteq V_{n-1}=V_{n}$.
 Put $S=V_{n}$. 
For each $v\in V_i\setminus V_{i-1}$, 
we have $H\in \mathcal{A}(V_{i-1},v)$ and so $d_H(v)= h(v)+d_F(v)$. 
This establishes  Condition~\ref{condition:final:2}.
 Because $S=V_{n}$, 
the previous claim implies Condition~\ref{condition:final:1} and completes the proof.
}\end{proof}
\section{Highly tree-connected factors with bounded degrees}
Our aim in this section is to prove Theorem~\ref{intro:thm:gen:tough-enough} with a more stronger version.
Before formulating the main result, we shall begin with some lemmas that allows us to make the proof shorter.
\subsection{Comparing tree-connectivity of $G \setminus [S,F]$ and $G \setminus S$}

In this subsection, we shall compare two parameters
$\Omega_m(G \setminus S)$
and $\Omega_m(G \setminus [S,F])$.
Before stating the main comparison, let us make the following simpler version.
Note that $\Omega_m(H\setminus [S,F])=\Omega_m(H\setminus S)+m|S|$ when $F$ is the trivial factor.
\begin{lem}\label{lem:GS,G[S,F]}
{Let $G$ be a graph with a factor $F$.
If $S\subseteq V(G)$, then
$$\Omega_m(G\setminus [S, F]) 
\le \Omega_m(G\setminus S)+mt_s-i_{s}.$$
where $t_s$ is the number  of $m$-tree-connected components of $G\setminus [S,F]$ whose vertices entirely lie in $S$, and
 $i_{s}$ is the number of edges of $F$ incident to vertices in $S$ joining different $m$-tree-connected components of $G\setminus [S,F]$.
}\end{lem}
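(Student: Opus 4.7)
The plan is to compare $\Omega_m(G\setminus[S,F])$ and $\Omega_m(G\setminus S)$ by analyzing how the $m$-tree-connected components of $H := G\setminus[S,F]$ interact with those of $G' := G\setminus S$. Let $\mathcal{C} = \{C_1,\ldots,C_k\}$ be the components of $H$, with $C_1,\ldots,C_{t_s}$ those entirely contained in $S$, and define the restricted partition $P' = \{C_i \setminus S : t_s < i \le k\}$ of $V(G')$, so $|P'| = k - t_s$. Any edge of $H$ that crosses $\mathcal{C}$ either has both endpoints outside $S$ (in which case it lies in $G'$ and crosses $P'$) or is incident to $S$, and then must belong to $F$ by the construction of $H$, so it is counted by $i_s$. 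Combining the two types yields $e_H(\mathcal{C}) = e_{G'}(P') + i_s$, and hence
$$\Omega_m(H) \;=\; mk - e_H(\mathcal{C}) \;=\; \bigl(m|P'| - e_{G'}(P')\bigr) + m t_s - i_s,$$
so it suffices to show $m|P'| - e_{G'}(P') \le \Omega_m(G')$.

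Let $\mathcal{D}$ denote the $m$-tree-connected components of $G'$. Since each $D \in \mathcal{D}$ is $m$-tree-connected in $G'$, hence in $H$, and is connected, $D$ lies inside a unique $C_i$ and in fact inside some $C_i \setminus S$. Therefore $\mathcal{D}$ refines $P'$, and for each $A = C_i \setminus S$ in $P'$ the $m$-tree-connected components of $G'[A]$ are exactly $\{D \in \mathcal{D} : D \subseteq A\}$; let $n_A$ denote their number. A direct computation identifies $e_{G'}(\mathcal{D}) - e_{G'}(P')$ with the number of edges of $G'$ between different $D$'s that lie in a common $A \in P'$, and grouping by $A$ gives $e_{G'}(\mathcal{D}) - e_{G'}(P') = \sum_{A \in P'}\bigl(m n_A - \Omega_m(G'[A])\bigr)$. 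The desired inequality thus reduces to showing $\Omega_m(G'[A]) \ge m$ for every $A \in P'$ and summing.

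The hard part is to verify the general lower bound $\Omega_m(G'') \ge m$ for any nonnull graph $G''$. I would prove this by contracting each $m$-tree-connected component of $G''$ to a single vertex, obtaining a multigraph $G^*$ with $\Omega_m(G'') = m|V(G^*)| - |E(G^*)|$; the maximality of the components forces $G^*$ to contain no $m$-tree-connected induced sub-multigraph on two or more vertices (otherwise its preimage would be a strictly larger $m$-tree-connected induced subgraph of $G''$). An induction on $|V(G^*)|$ applying Nash-Williams to $G^*$ (which must fail on some partition, since $G^*$ itself is not $m$-tree-connected) together with the inductive hypothesis on the parts of that partition shows that $G^*$ is $m$-sparse, so $|E(G^*)| \le m(|V(G^*)| - 1)$ and hence $\Omega_m(G'') \ge m$. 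Applying this bound to each $G'[A]$ gives $e_{G'}(\mathcal{D}) - e_{G'}(P') \le m(|\mathcal{D}| - |P'|)$, which rearranges to $m|P'| - e_{G'}(P') \le m|\mathcal{D}| - e_{G'}(\mathcal{D}) = \Omega_m(G')$, completing the proof.
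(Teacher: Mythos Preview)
Your proof is correct and follows essentially the same route as the paper's: split the $m$-tree-connected components of $G\setminus[S,F]$ into those lying in $S$ and the rest, relate the latter to the partition of $G\setminus S$, and reduce everything to the inequality $\Omega_m(G'[A])\ge m$ (which the paper simply asserts as $m(|P_A|-1)\ge e_A$, while you supply a Nash--Williams argument for it). One small slip: where you write ``the components of $H$'' you evidently mean the $m$-tree-connected components, since you immediately use $\Omega_m(H)=mk-e_H(\mathcal{C})$ and match $t_s$ to the count from the statement.
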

\begin{proof}
{Define $P$ to be the partition of $V(G)$ obtained from the $m$-tree-connected components of $G\setminus [S,F]$.
Let $P_s=\{A\in P: A\subseteq S\}$ and $P'=\{A\in P: A\not \in P_s\}$, and also $P_0=\{A\setminus S: A\in P'\}$.
For every $A\in P$, define $P_A$  to be the partition of $A\setminus S$ obtained from the $m$-tree-connected components of $G[A]\setminus S$. Also, define $e_A$ to be the number of edges with both ends in $A\setminus S$ joining different $m$-tree-connected components of $G[A]\setminus S$.
Since for every $A\in P'$, $m(|P_A|-1)\ge e_A$, we must have 
$$\Omega_m(G\setminus [S, F])=
m|P_s|+m|P'|- e_{G\setminus [S, F]}(P)\le
m|P_s|+ \sum_{A\in P'}(m|P_A|-e_A)- e_{G\setminus [S, F]}(P),$$
which implies that  
$$\Omega_m(G\setminus [S, F]) \le  
m|P_s|+m\sum_{A\in P'}|P_A|-e_{G\setminus S}(P_0)-i_s\le 
mt_s+\Omega_m(G\setminus S)-i_s.$$
Hence the assertion holds.
}\end{proof}
The following theorem  is a generalization of 
Lemma 5.1 in \cite{ClosedWalks} and provides a useful relationship between two parameters
$\Omega_m(G \setminus S)$
and $\Omega_m(G \setminus [S,F])$.
\begin{lem}\label{lem:gen}
{Let $G$ be a graph with a factor $F$.
Let $c\in (1,\infty)$ be a real number and let $\xi:V(G)\rightarrow [0,m]$ be a real function 
in which for every $m$-tree-connected component $C$ of $F$, 
$\sum_{v\in V(C)}\xi(v)\ge m-\frac{m}{c-1}(|V(C)-1|)-\frac{1}{2}d_F(C)$.
If $S\subseteq V(G)$, then
$$ \Omega_m(G\setminus [S,F])\le \Omega_m(G\setminus S)+
\frac{1}{(c-1)}e_F(S)+\sum_{v\in S}\xi(v).$$
In addition, $\Omega_m(G\setminus [S,F])\le\Omega_m(G\setminus S)+ e_F(S)+\sum_{v\in S}\max\{0,m- d_F(v)\}$.
}\end{lem}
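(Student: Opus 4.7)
The starting point is Lemma~\ref{lem:GS,G[S,F]}, which gives
$\Omega_m(G\setminus[S,F]) \le \Omega_m(G\setminus S) + mt_s - i_s$;
it thus suffices to bound $mt_s - i_s$ from above by $\frac{1}{c-1}e_F(S) + \sum_{v\in S}\xi(v)$ for the main inequality and by $e_F(S) + \sum_{v\in S}\max\{0,m-d_F(v)\}$ for the ``in addition'' part. Recall that $t_s$ is the number of $m$-tree-connected components of $G\setminus[S,F]$ lying entirely in $S$, and $i_s$ is the number of $F$-edges incident to $S$ joining different $m$-tree-connected components of $G\setminus[S,F]$.

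The structural observation driving the analysis is that every $m$-tree-connected component $D$ of $G\setminus[S,F]$ entirely in $S$ satisfies $D = F[V(D)]$, since the only edges surviving among vertices of $S$ are $F$-edges; consequently $D$ is an $m$-tree-connected induced subgraph of $F$ and, in particular, lies within a single $m$-tree-connected component of $F$. After accounting for the fact that each $F$-edge between two such components $D$ and $D'$ is counted once in $i_s$ but twice in $\sum_D d_F(D)$, one obtains
\[
mt_s - i_s \;\le\; \sum_{D}\bigl(m - d_F(D)\bigr) + E^{\times},
\]
where $E^{\times}$ denotes the number of $F$-edges with endpoints in two distinct components $D \ne D'$ of $G\setminus[S,F]$ entirely contained in $S$.

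For the ``in addition'' bound I would establish the per-$D$ estimate $m - d_F(D) \le e_F(V(D)) + \sum_{v\in V(D)}\max\{0,m-d_F(v)\}$ by splitting on $|V(D)|$. If $|V(D)|\ge 2$, the $m$-tree-connectedness of $D$ forces $e_F(V(D)) \ge m(|V(D)|-1) \ge m$, and the bound holds trivially; if $|V(D)|=1$ with $V(D)=\{v\}$, it reduces to $m-d_F(v) \le \max\{0,m-d_F(v)\}$. Summing and noting that $\sum_D e_F(V(D)) + E^{\times}$ is a subcount of $F$-edges with both endpoints in $\bigcup_D V(D) \subseteq S$, hence at most $e_F(S)$, yields the ``in addition'' bound.

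For the main first bound I would aggregate contributions by the $m$-tree-connected components $C$ of $F$, invoking for each $C$ the hypothesis
$\sum_{v\in V(C)}\xi(v)\ge m-\frac{m}{c-1}(|V(C)|-1)-\frac{1}{2}d_F(C)$
together with the lower bound $|E(C)|\ge m(|V(C)|-1)$, in order to charge the contributions of the $D$'s lying in $V(C)\cap S$ against $\frac{1}{c-1}e_F(V(C)\cap S) + \sum_{v\in V(C)\cap S}\xi(v)$. When $V(C)\subseteq S$, the analysis is direct: the single relevant $D$ equals $C$, and substituting into the hypothesis yields the bound, with the non-positive remainder $-\tfrac12 d_F(C)$ absorbing any double count arising from other components $C'$ satisfying $V(C')\subseteq S$. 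The main obstacle will be the case $V(C)\not\subseteq S$: the $D$'s contained in $V(C)\cap S$ need not partition it, and one must allocate the $\xi$-budget on the uncovered vertices of $V(C)\cap S$ together with the $\frac{m}{c-1}(|V(C)|-1)$ and $\tfrac12 d_F(C)$ slack from the hypothesis to make up the shortfall. Summing over $C$ with $\sum_C e_F(V(C)\cap S) \le e_F(S)$ and $\bigcup_C (V(C)\cap S) = S$ then completes the argument.
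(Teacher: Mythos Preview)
Your treatment of the ``in addition'' inequality is essentially the paper's: both split the $m$-tree-connected components $A\in P_s$ lying inside $S$ according to whether $|A|=1$ or $|A|\ge 2$, use $e_F(A)\ge m$ in the latter case, and sum.

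For the first inequality your route diverges from the paper's, and the place where you stop is exactly where the difficulty lies. The paper does \emph{not} aggregate by $m$-tree-connected components $C$ of $F$; instead it works per $A\in P_s$. Since $A\subseteq S$ forces $(G\setminus[S,F])[A]=F[A]$, each such $F[A]$ is itself an $m$-tree-connected subgraph of $F$, and the paper simply invokes the $\xi$-hypothesis for that subgraph to get
\[
m-\tfrac12 d_F(A)\ \le\ \tfrac{m(|A|-1)}{c-1}+\sum_{v\in A}\xi(v)\ \le\ \tfrac{1}{c-1}e_F(A)+\sum_{v\in A}\xi(v),
\]
and then sums over $A\in P_s$, using $\sum_{A}\tfrac12 d_F(A)\le i_s$, $\sum_A e_F(A)\le e_F(S)$ and $\sum_A\sum_{v\in A}\xi(v)\le\sum_{v\in S}\xi(v)$. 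No case analysis on $C$ is needed at all. Note that this reading of the hypothesis (for every $m$-tree-connected \emph{subgraph}) is the one used in Theorem~\ref{thm:gen:tough-enough}; the word ``component'' in the lemma statement appears to be a slip.

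If you insist on the literal ``component'' hypothesis and proceed per $C$, the case $V(C)\not\subseteq S$ is a genuine gap, not a routine allocation. Several distinct $D$'s can sit inside a single $V(C)\cap S$, each contributing $m$ to $mt_s$, while the hypothesis for $C$ supplies only one $m$'s worth of budget; the slack $\frac{m}{c-1}(|V(C)|-1)+\tfrac12 d_F(C)$ need not compensate for this multiplicity. Your sketch does not address this, and I do not see how to close it without upgrading the hypothesis to subgraphs, at which point the paper's per-$A$ argument is both shorter and complete.
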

\begin{proof}
{We denote by  $P_s$ be the set of vertex sets of all $m$-tree-connected components of $G\setminus [S,F]$ whose vertices entirely lie in $S$. In the first statement, since $e_F(A)\ge m(|A|-1)$ for every $A\in P_s$, we must have
$$m-\frac{1}{2}d_F(A)\le \frac{m(|A|-1)}{(c-1)}+ \sum_{v\in A}\xi(v)\le
 \frac{1}{(c-1)}e_F(A)+\sum_{v\in A}\xi(v),
$$
which implies that
$$m|P_s|-i_s\le \sum_{A\in P_s}(m-\frac{1}{2}d_F(A))\le
 \frac{1}{(c-1)}\sum_{A\in P_s}e_F(A)+\sum_{A\in P_s}\sum_{v\in A}\xi(v)\le \frac{1}{(c-1)}e_F(S)+\sum_{v\in S}\xi(v). 
$$
Now, let us define $P'_s$ to be the set of all $A\in P_s$ with $|A|\ge 2$ and define  $S'$ to be the set of all $v\in S$ with $\{v\}\in P_s$.
In the second statement, since $e_F(A)\ge m(|A|-1)\ge m$ for every $A\in P'_s$, we must have
$$m|P'_s|+m|S'|-i_s\le\sum_{A\in P'_s}e_F(A)+ 
m|S'|-\sum_{v\in S'} d_F(v)+e_F(S')
\le \sum_{v\in S'} (m-d_F(v))+e_F(S),$$
which implies that
$$m|P_s|-i_s\le 
\sum_{v\in S} \max\{0,m- d_F(v)\}+e_F(S).$$
Hence the two assertions follow from Lemma~\ref{lem:GS,G[S,F]}.
}\end{proof}
%
%
%
%
%
%
%
%
%
\subsection{Extending  $m$-sparse factors}
\label{subsec:Toughness:Factors}
The following lemma is a common generalization of Lemma 3.1 in \cite{ClosedWalks} and
Lemma 4.2~in~\cite{ClosedTrails}.
\begin{lem}\label{lem:spanningforest:gen}
{Let $H$ be an $m$-sparse graph with a factor $F$. If $S\subseteq V(H)$ and $\mathcal{F}=H\setminus E(F)$, then 
$$\sum_{v\in S}d_{\mathcal{F}}(v)=\Omega_m(H\setminus [S,F])-\Omega_m(H)+e_\mathcal{F}(S).$$
}\end{lem}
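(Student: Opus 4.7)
The plan is to reduce the identity to a routine edge count by exploiting a clean closed form for $\Omega_m$ on $m$-sparse graphs. The first step I would carry out is to establish that for any $m$-sparse graph $G$,
$$\Omega_m(G) = m|V(G)| - |E(G)|.$$
This follows from the characterization already recorded in the preliminaries, namely that the $m$-tree-connected components of an $m$-sparse graph are minimally $m$-tree-connected: if $P$ denotes the partition of $V(G)$ into the vertex sets of these components, then each induced part $G[V(C)]$ contributes exactly $m(|V(C)|-1)$ edges, and summing gives $|E(G)| = m|V(G)| - m|P| + e_G(P)$, which rearranges to the displayed equation via the definition $\Omega_m(G) = m|P| - e_G(P)$.

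Next, I would observe that $H\setminus[S,F]$ is obtained from $H$ by deleting precisely those edges of $\mathcal{F}=H\setminus E(F)$ that are incident to at least one vertex of $S$, and that since $m$-sparseness is preserved under edge deletion, $H\setminus[S,F]$ is itself $m$-sparse. Since $H$ and $H\setminus[S,F]$ share the same vertex set, applying the closed form above to both yields
$$\Omega_m(H\setminus[S,F]) - \Omega_m(H) = |E(H)| - |E(H\setminus[S,F])|,$$
and the right-hand side is exactly the number of deleted edges. A standard double-count then shows that the number of edges of $\mathcal{F}$ with at least one endpoint in $S$ equals $\sum_{v\in S}d_{\mathcal{F}}(v) - e_{\mathcal{F}}(S)$, since edges internal to $\mathcal{F}[S]$ are counted twice in the degree sum. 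Combining these two observations delivers the stated equality.

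I do not foresee any real obstacle; the content of the lemma is essentially the closed-form expression for $\Omega_m$ on $m$-sparse graphs together with a one-line double-counting step. If one prefers not to invoke the closed form directly, an equivalent route is to delete the relevant edges of $\mathcal{F}$ incident to $S$ one at a time: each deletion keeps the graph $m$-sparse and raises $\Omega_m$ by exactly one (a sharpening of the recorded remark $\Omega_m(F) < \Omega_m(F\setminus e)$ in the sparse case), so the cumulative change in $\Omega_m$ is the number of such deletions, which once more equals $\sum_{v\in S}d_{\mathcal{F}}(v) - e_{\mathcal{F}}(S)$.
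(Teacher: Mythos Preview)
Your proposal is correct. Your primary route---establishing the closed form $\Omega_m(G)=m|V(G)|-|E(G)|$ for $m$-sparse $G$ and then reducing the lemma to a one-line double count---is a genuinely different packaging from the paper's argument. The paper proceeds by induction on the number of edges of $\mathcal{F}$ incident to $S$: it removes one such edge $e$ at a time and checks that $\Omega_m(H)=\Omega_m(H\setminus e)-1$, that $\Omega_m(H\setminus[S,F])$ is unchanged, and that the degree sum and $e_{\mathcal{F}}(S)$ shift compatibly. This is precisely the ``alternative route'' you sketch at the end. Your closed-form approach is cleaner and more conceptual, since it makes transparent that the identity is nothing more than an edge count once $\Omega_m$ is linearized on $m$-sparse graphs; the paper's inductive version avoids isolating the closed form as a separate statement but is effectively proving it incrementally through the step $\Omega_m(H)=\Omega_m(H\setminus e)-1$.
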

\begin{proof}
{By induction on the number of edges of $\mathcal{F}$ which are incident to the vertices in $S$. If there is no edge of  $\mathcal{F}$ incident to a vertex in $S$, then the proof is clear. Now, suppose that there exists an edge $e=uu'\in E(\mathcal{F})$ with $|S\cap \{u,u'\}|\ge 1$. Hence
\begin{enumerate}{
\item $\Omega_m(H)=\Omega_m (H\setminus e)-1,$
\item $\Omega_m(H\setminus [S,F])= \Omega_m((H\setminus e)\setminus [S,F]),$

\item $e_\mathcal{F}(S)=e_{\mathcal{F}\setminus e}(S)+|S\cap \{u,u'\}|-1,$
\item $\sum_{v\in S}d_{\mathcal{F}}(v)=\sum_{v\in S}d_{\mathcal{F}\setminus e}(v)\,+|S\cap \{u,u'\}|.$
}\end{enumerate}
Therefore, by the induction hypothesis on $H\setminus e$ with the factor $F$ the lemma holds.
}\end{proof}
The following theorem is essential in this section.
\begin{thm}\label{thm:eF(S):base}
{Let $G$ be a graph with a factor $F$ and let $h$ be a nonnegative integer-valued function on $V(G)$.
Define $\mathcal{A}$ to be the set of all $m$-sparse factors $\mathcal{F}$ of $G$ whose edges join different $m$-tree-connected components of $F$ and for each vertex $v$, $d_{\mathcal{F}}(v)\le h(v)$. 
If for all $S\subseteq V(G)$, 
$$ \Omega_m(G\setminus [S,F]) \le \sum_{v\in S}h(v)+m-\max_{\mathcal{F} \in \mathcal{A}}e_\mathcal{F}(S),$$
then $G$ has an $m$-tree-connected factor $H$ containing $F$ 
such that for each vertex $v$, 
$d_H(v)\le h(v)+d_F(v)$
}\end{thm}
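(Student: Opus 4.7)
The plan is to take $H$ to be an $m$-sparse factor of $G$ containing $F$ with $te(H, h+d_F) = 0$ and minimum $\Omega_m(H)$ among all such factors (so that $F$, assumed $m$-sparse, itself serves as a valid starting candidate), and then to show that $H$ is forced to be $m$-tree-connected. Applying Theorem~\ref{thm:final:minor-improvement} to this $H$ produces a subset $S \subseteq V(G)$ with $\Omega_m(G \setminus [S,F]) = \Omega_m(H \setminus [S,F])$ and $d_H(v) = h(v) + d_F(v)$ for every $v \in S$.

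I then set $\mathcal{F} = H \setminus E(F)$ and verify $\mathcal{F} \in \mathcal{A}$: as a subgraph of the $m$-sparse $H$, $\mathcal{F}$ is $m$-sparse; each edge of $\mathcal{F}$ must join two distinct $m$-tree-connected components of $F$, for otherwise adjoining this edge to the $m$-tree-connected subgraph $F[X]$ spanned by its endpoints would give $e_H(X) \ge m(|X|-1) + 1$, contradicting the $m$-sparseness of $H$; and $d_{\mathcal{F}}(v) = d_H(v) - d_F(v) \le h(v)$, with equality holding for every $v \in S$ by Condition~\ref{condition:final:2} of Theorem~\ref{thm:final:minor-improvement}.

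Applying Lemma~\ref{lem:spanningforest:gen} to $H$ on the set $S$ then yields
$$\sum_{v \in S} h(v) \;=\; \sum_{v \in S} d_{\mathcal{F}}(v) \;=\; \Omega_m(H \setminus [S,F]) - \Omega_m(H) + e_{\mathcal{F}}(S).$$
Substituting $\Omega_m(G \setminus [S,F]) = \Omega_m(H \setminus [S,F])$ into the hypothesis of the theorem and using $e_{\mathcal{F}}(S) \le \max_{\mathcal{F}' \in \mathcal{A}} e_{\mathcal{F}'}(S)$ cancels the $\sum_{v \in S} h(v)$ and $e_{\mathcal{F}}(S)$ contributions and leaves $\Omega_m(H) \le m$. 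Every nonnull $m$-sparse graph satisfies $\Omega_m \ge m$, because its $m$-tree-connected components are minimally $m$-tree-connected and hence the cross-edge count obeys $e_H(P) \le m(|P|-1)$; the resulting equality $\Omega_m(H) = m$ forces $|P| = 1$, i.e., $H$ is $m$-tree-connected, which is exactly what is required. The main subtlety lies in the bookkeeping: Condition~\ref{condition:final:2} of Theorem~\ref{thm:final:minor-improvement} is precisely what is needed to turn $\sum_{v \in S} d_{\mathcal{F}}(v)$ into $\sum_{v \in S} h(v)$ in the identity from Lemma~\ref{lem:spanningforest:gen}, which in turn is what makes the final inequality chain collapse to a clean bound on $\Omega_m(H)$.
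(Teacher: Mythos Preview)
Your argument is essentially the paper's own proof for the case when $F$ is $m$-sparse: you invoke Theorem~\ref{thm:final:minor-improvement} to produce $S$, set $\mathcal{F}=H\setminus E(F)$, and combine Lemma~\ref{lem:spanningforest:gen} with the hypothesis to force $\Omega_m(H)\le m$. Your verification that $\mathcal{F}\in\mathcal{A}$ is in fact more detailed than the paper's (which simply asserts it), and your justification that $\Omega_m(H)\ge m$ for nonnull $m$-sparse graphs is likewise spelled out where the paper leaves it implicit.

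There is, however, one genuine gap. The theorem does not assume $F$ is $m$-sparse, yet your opening sentence says ``$F$, assumed $m$-sparse, itself serves as a valid starting candidate.'' If $F$ is not $m$-sparse, there is no $m$-sparse factor $H$ containing $F$ at all, so your minimizer does not exist and the argument cannot begin. The paper closes this gap with a short reduction: pass to an $m$-sparse factor $F'\subseteq F$ having the same $m$-tree-connected components (hence the same $G\setminus[S,F']=G\setminus[S,F]$ and the same family $\mathcal{A}$), apply the $m$-sparse case to $F'$, and then add back the edges of $E(F)\setminus E(F')$ to the resulting $m$-tree-connected factor. You should add this reduction to make the proof complete.
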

\begin{proof}
{First, suppose that $F$ is $m$-sparse.
Let $H$ be an $m$-sparse factor of $G$ containing $F$ with $te(H,h+d_F)=0$ and with the minimum $\Omega_m(H)$.
Define $S$ to be a subset of $V (G)$ with the properties described in Theorem~\ref{thm:final:minor-improvement}.
Put $\mathcal{F}=H\setminus E(F)$ so that $\mathcal{F}\in \mathcal{A}$.
By Lemma~\ref{lem:spanningforest:gen} and Theorem~\ref{thm:final:minor-improvement},
$$\sum_{v\in S} h(v) = \sum_{v\in S} d_\mathcal{F}(v) = 
 \Omega_m(H\setminus [S, F]) -\Omega_m(H)+ e_{\mathcal{F}}(S),$$
and so
$$\Omega_m(H) = \Omega_m(G\setminus [S, F]) + e_{\mathcal{F}}(S)- \sum_{v\in S} h(v)\le m.$$
Hence $\Omega_m(H) =m$ and the theorem holds.
Now, suppose that $F$ is not $m$-sparse.
Remove some of the edges of the $m$-tree-connected components of $F$ until the resulting graph $F'$
becomes $m$-sparse such that their $m$-tree-connected components have the same vertices. 
It is enough, now, to apply the theorem on $F'$ and finally add the edges of $E(F)\setminus E(F')$ 
to that explored $m$-tree-connected factor. 
}\end{proof}
The following corollary shows an interesting  application of Theorem~\ref{thm:eF(S):base}.
The special case $m=1$ of this result says that every linear forest of every $1$-tough graph can be extend to a spanning tree with maximum degree at most $3$.
\begin{cor}\label{cor:sparse:linear-forest}
{Let $G$ be a graph with an $m$-sparse factor $F$ satisfying $\Delta(F)\le 2m$. Then $F$ can be extended to an $m$-tree-connected factor $H$  satisfying $\Delta(H)\le 2m+1$, if for all $S\subseteq V(G)$, $$ \Omega_m(G\setminus S) \le \frac{1}{2}|S|+m.$$ 
}\end{cor}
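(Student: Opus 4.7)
The plan is to apply Theorem~\ref{thm:eF(S):base} with $h(v):=2m+1-d_F(v)$. Since $\Delta(F)\le 2m$, $h$ is a nonnegative integer-valued function on $V(G)$, and the theorem will then produce an $m$-tree-connected factor $H\supseteq F$ with $d_H(v)\le h(v)+d_F(v)=2m+1$, as required.

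The entire argument reduces to verifying the hypothesis of Theorem~\ref{thm:eF(S):base}: for every $S\subseteq V(G)$, $\Omega_m(G\setminus[S,F]) \le \sum_{v\in S}h(v)+m-\max_{\mathcal{F}\in\mathcal{A}}e_\mathcal{F}(S)$. Because each $\mathcal{F}\in\mathcal{A}$ satisfies $d_\mathcal{F}(v)\le h(v)$, a handshake gives $\max_{\mathcal{F}} e_\mathcal{F}(S)\le\tfrac{1}{2}\sum_{v\in S}h(v)$, so it is enough to prove $\Omega_m(G\setminus[S,F])\le\tfrac{1}{2}\sum_{v\in S}h(v)+m$. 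Applying Lemma~\ref{lem:GS,G[S,F]} together with the hypothesis $\Omega_m(G\setminus S)\le \tfrac{1}{2}|S|+m$ bounds the left side by $\tfrac{1}{2}|S|+m+mt_s-i_s$, and, after cancelling common terms, reduces everything to the purely $F$-dependent inequality
$$2(mt_s-i_s)+\sum_{v\in S}d_F(v)\le 2m|S|, \qquad (\star)$$
where $t_s$ and $i_s$ are as in Lemma~\ref{lem:GS,G[S,F]}.

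Proving $(\star)$ is the main step. Let $C_1,\dots,C_{t_s}$ be the wholly-in-$S$ $m$-tree-connected components of $G\setminus[S,F]$ and set $n_s:=\sum_i|V(C_i)|$. Within each $C_i$, the edges of $G\setminus[S,F]$ are exactly those of $F$; since $F[C_i]$ is $m$-tree-connected and $F$ is $m$-sparse, we obtain $e_F(C_i)=m(|V(C_i)|-1)$. A standard degree-sum identity then yields $\sum_{v\in\bigcup_iV(C_i)}d_F(v)=2m(n_s-t_s)+\sum_i d_F(C_i)$, while $\Delta(F)\le 2m$ bounds the contribution of the remaining $|S|-n_s$ vertices by $2m(|S|-n_s)$, giving $\sum_{v\in S}d_F(v)\le 2m(|S|-t_s)+\sum_i d_F(C_i)$.

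The main obstacle is the final estimate $\sum_i d_F(C_i)\le 2i_s$. I will argue as follows: write $\sum_i d_F(C_i)=2e^*+b$, where $e^*$ counts the $F$-edges between two distinct components $C_i$ and $C_j$, and $b$ counts the $F$-edges from some $C_i$ to a vertex outside $\bigcup_i V(C_i)$. Every such edge joins two distinct $m$-tree-connected components of $G\setminus[S,F]$—either two different $C_i$s, or a $C_i$ and the mixed or outside-$S$ component containing the other endpoint—so $i_s\ge e^*+b$; hence $2i_s\ge 2e^*+2b\ge 2e^*+b=\sum_i d_F(C_i)$. Substituting into the estimate above yields $(\star)$ and completes the verification.
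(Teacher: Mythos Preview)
Your proof is correct and follows essentially the same route as the paper: both apply Theorem~\ref{thm:eF(S):base} with $h(v)=2m+1-d_F(v)$, reduce via the handshake bound $e_{\mathcal F}(S)\le\tfrac12\sum_{v\in S}h(v)$ to showing $\Omega_m(G\setminus[S,F])\le\tfrac12\sum_{v\in S}h(v)+m$, invoke Lemma~\ref{lem:GS,G[S,F]}, and close with the $m$-sparseness of $F$ and the estimate $\sum_i d_F(C_i)\le 2i_s$. The only cosmetic difference is bookkeeping: the paper substitutes $i_s\ge\tfrac12\sum_{A\in P}d_F(A)$ at the outset and then rewrites $m-\tfrac12 d_F(A)\le\sum_{v\in A}(m-\tfrac12 d_F(v))$ via sparseness, whereas you keep $i_s$ explicit and use the exact count $e_F(C_i)=m(|V(C_i)|-1)$ before the same handshake identity.
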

\begin{proof}
{For each vertex $v$, define $f(v)=2m+1-d_F(v)$ so that $f(v)\ge 1$.
Let $S$ be a subset of $V(G)$ and 
let $P$ be the set of vertex sets of $m$-tree-connected components of $G\setminus [S,F]$ 
whose vertices entirely lie in $S$. 
By the assumption and Lemma~\ref{lem:GS,G[S,F]},
$$\Omega_m(G\setminus [S, F]) 
\le \Omega_m(G\setminus S)+m|P|-\frac{1}{2}\sum_{A\in P}d_F(A)\le
 \frac{1}{2}|S|+m-\sum_{A\in P}(\frac{1}{2}d_F(A)-m).$$
Since $F$ is $m$-sparse, for every $A\in P$, we must have
$$0\le m(|A|-1)-e_{F}(A)=
\sum_{v\in A}(m-\frac{1}{2}d_F(v))+\frac{1}{2}d_F(A)-m
=\sum_{v\in A} (\frac{1}{2} f(v)-\frac{1}{2})+\frac{1}{2}d_F(A)-m,$$
which implies that
$$\frac{1}{2}|S|-\sum_{A\in P}(\frac{1}{2}d_F(A)-m) \le 
\frac{1}{2}|S|+
\sum_{v\in A\in P}(\frac{1}{2}f(v)-\frac{1}{2}) \le 
\sum_{v\in S}\frac{1}{2}f(v).$$
Let $\mathcal{F}$ be an arbitrary $m$-sparse factor  of $G$ whose edges join different $m$-tree-connected components of $F$ and for each vertex $v$, $d_{\mathcal{F}}(v)\le f(v)$. 
Obviously, $e_\mathcal{F}(S)\le \sum_{v\in S}\frac{1}{2}d_{\mathcal{F}}(v)\le  \sum_{v\in S}\frac{1}{2}f(v)$. Therefore,
$$ \Omega_m(G\setminus [S,F]) \le \sum_{v\in S} \frac{1}{2}f(v)+m \le 
\sum_{v\in S} f(v)+m-e_{\mathcal{F}(S)}.$$
Hence the assertion follows from Theorem~\ref{thm:eF(S):base}.
}\end{proof}
\subsection{Extending  factors with large tree-connected components}
\label{subsec:-strongly tough enough graphs}
The following theorem provides a stronger version for Theorem~\ref{intro:thm:gen:tough-enough} 
which is motivated by the main result in \cite{ClosedWalks}.
\begin{thm}\label{thm:gen:tough-enough}
{Let $G$ be a graph with a factor $F$ and let $h$ be a nonnegative integer-valued function on $V(G)$.
Let $c\in [2,\infty)$ be a real number and let $\xi:V(G)\rightarrow [0,m]$ be a real function in which for every $m$-tree-connected subgraph $C$ of $F$, 
 $\sum_{v\in V(C)}\xi(v)\ge m-\frac{m}{c-1}(|V(C)|-1)-\frac{1}{2}d_F(C)$.
If for all $S\subseteq V(G)$, 
$$ \Omega_m (G\setminus S) <
\sum_{v\in S}\big(h(v)-\frac{m}{c-1}-\xi(v)\big)+m+1+\frac{1}{c-1}\Omega_m(G[S])-
\frac{c-2}{c-1}\min\{\lfloor\sum_{v\in S}\frac{h(v)}{2}\rfloor,m|S|-\Omega_m(G[S])\},$$
then $G$ has an $m$-tree-connected factor $H$ containing $F$ 
such that for each vertex $v$, 
$d_H(v)\le h(v)+d_F(v)$
}\end{thm}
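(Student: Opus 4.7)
The plan is to apply Theorem~\ref{thm:final:minor-improvement} directly (instead of routing through Theorem~\ref{thm:eF(S):base}) and show that the resulting minimum-$\Omega_m$ factor $H$ is already $m$-tree-connected. First I would reduce to the case where $F$ is $m$-sparse exactly as at the end of the proof of Theorem~\ref{thm:eF(S):base}: remove redundant edges from within each $m$-tree-connected component of $F$ to obtain an $m$-sparse $F'\subseteq F$ with the same $m$-tree-connected components and with $d_{F'}(C)=d_F(C)$ for every such component $C$, then at the end add the edges of $E(F)\setminus E(F')$ back to the constructed factor. Applying Theorem~\ref{thm:final:minor-improvement} yields an $m$-sparse $H\supseteq F$ with $te(H,h+d_F)=0$, minimum $\Omega_m(H)$, and a set $S$ satisfying its two conclusions. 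Writing $\mathcal{F}=H\setminus E(F)$, the $m$-sparseness of $H$ forces each edge of $\mathcal{F}$ to join different $m$-tree-connected components of $F$, and $d_\mathcal{F}(v)\le h(v)$. Lemma~\ref{lem:spanningforest:gen}, combined with properties (1) and (2) from Theorem~\ref{thm:final:minor-improvement}, then yields the central identity
$$\Omega_m(H)=\Omega_m(G\setminus[S,F])+e_\mathcal{F}(S)-\sum_{v\in S}h(v),$$
so it suffices to prove $\Omega_m(G\setminus[S,F])+e_\mathcal{F}(S)\le\sum_{v\in S}h(v)+m$.

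The crucial estimates are two upper bounds on $e_\mathcal{F}(S)$. The degree constraint gives $e_\mathcal{F}(S)\le\lfloor\sum_{v\in S}h(v)/2\rfloor$. Since $H$ is $m$-sparse, the induced graph $H[S]$ is an $m$-sparse sub-factor of $G[S]$; and the maximum size of any $m$-sparse sub-factor of $G[S]$ equals $m|S|-\Omega_m(G[S])$ (within each $m$-tree-connected component $A_i$ of $G[S]$ one has at most $m(|A_i|-1)$ edges, and each cross-edge between distinct such components can be adjoined while preserving $m$-sparseness by Lemma~\ref{lem:add-e}). Hence
$$e_F(S)+e_\mathcal{F}(S)=e_H(S)\le m|S|-\Omega_m(G[S]),$$
and together these give $e_\mathcal{F}(S)\le M$, where $M$ denotes the min-term appearing in the hypothesis.

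To close the argument I would substitute Lemma~\ref{lem:gen} applied to $F$ and $\xi$ together with the hypothesis into the central identity. Clearing the factors of $\frac{1}{c-1}$, the required inequality reduces to
$$\Omega_m(G[S])+e_F(S)+e_\mathcal{F}(S)+(c-2)\bigl(e_\mathcal{F}(S)-M\bigr)\le m|S|.$$
The sum of the first three terms is at most $m|S|$ by the displayed bound above, and the last term is nonpositive because $c\ge2$ and $e_\mathcal{F}(S)\le M$. Combining this with the strict inequality in the hypothesis and the integrality of $\Omega_m(H)$ forces $\Omega_m(H)\le m$, so $H$ is $m$-tree-connected. The main obstacle is the interplay between the two bounds on $e_\mathcal{F}(S)$: it is precisely the coefficient $\frac{c-2}{c-1}$ of the min-term in the hypothesis that lets the slack $(c-2)(e_\mathcal{F}(S)-M)\le 0$ absorb the gap between the degree bound and the sparseness bound. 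The supporting formula $m|S|-\Omega_m(G[S])$ for the maximum $m$-sparse sub-factor of $G[S]$, though implicit in the matroid background sketched in Section~\ref{sec:Generalizations}, is the subtle combinatorial ingredient that has to be invoked.
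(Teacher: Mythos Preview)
Your proof is correct and follows the same route as the paper's: reduce to $m$-sparse $F$, apply Theorem~\ref{thm:final:minor-improvement} to obtain $H$ and $S$, derive the identity $\Omega_m(H)=\Omega_m(G\setminus[S,F])+e_\mathcal{F}(S)-\sum_{v\in S}h(v)$ via Lemma~\ref{lem:spanningforest:gen}, bound $\Omega_m(G\setminus[S,F])$ by Lemma~\ref{lem:gen}, and combine the two estimates $e_\mathcal{F}(S)\le\lfloor\sum h(v)/2\rfloor$ and $e_H(S)\le m|S|-\Omega_m(G[S])$ to conclude $\Omega_m(H)<m+1$. The only difference is cosmetic: you clear the $\tfrac{1}{c-1}$ factors and package the final step as the single inequality $\Omega_m(G[S])+e_H(S)+(c-2)(e_\mathcal{F}(S)-M)\le m|S|$, whereas the paper keeps the three relations separate and substitutes directly.
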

\begin{proof}
{First, suppose that $F$ is $m$-sparse.
Let $H$ be an $m$-sparse factor of $G$ containing $F$ with $te(H,h+d_F)=0$ and with the minimum $\Omega_m(H)$.
Define $S$ to be a subset of $V (G)$ with the properties described in Theorem~\ref{thm:final:minor-improvement}.
Put $\mathcal{F}=H\setminus E(F)$.
By Lemma~\ref{lem:spanningforest:gen} and Theorem~\ref{thm:final:minor-improvement},
$$\sum_{v\in S} h(v) = \sum_{v\in S} d_\mathcal{F}(v) = 
 \Omega_m(H\setminus [S, F]) -\Omega_m(H)+ e_{\mathcal{F}}(S),$$
and so
\begin{equation}\label{eq:thm:xi:1}
 \Omega_m(H) = \Omega_m(G\setminus [S, F]) - \sum_{v\in S} h(v) + e_{\mathcal{F}}(S).
\end{equation}
Also, by Lemma~\ref{lem:gen},
\begin{equation}\label{eq:thm:xi:2}
\Omega_m(G\setminus [S,F]) \le 
\Omega_m(G\setminus S)+\frac{1}{c-1}e_F(S)+\sum_{v\in S}\xi(v).
\end{equation}
Since 
$e_\mathcal{F}(S)+e_F(S)=e_{H}(S) \le m|S|-\Omega_m(G[ S])$,
$$e_\mathcal{F}(S)+\frac{1}{c-1}e_F(S)\le \frac{c-2}{c-1}e_\mathcal{F}(S)+\frac{1}{c-1}(m|S|-\Omega_m(G[S])).$$
 In addition, since $e_\mathcal{F}(S) \le\frac{1}{2}\sum_{v\in S}d_{\mathcal{F}}(v) =\lfloor\frac{1}{2}\sum_{v\in S}h(v)\rfloor$, we must have
\begin{equation}\label{eq:thm:xi:3}
e_\mathcal{F}(S)+\frac{1}{c-1}e_F(S)
\le \frac{c-2}{c-1}\min\{\lfloor\frac{1}{2}\sum_{v\in S}h(v)\rfloor, m|S|-\Omega_m(G[S])\}+\frac{1}{c-1}(m|S|-\Omega_m(G[S])).
\end{equation}
Therefore, Relations~(\ref{eq:thm:xi:1}),~(\ref{eq:thm:xi:2}), and~(\ref{eq:thm:xi:3}) can conclude that
\begin{equation*}
\Omega_m(H) \le
\Omega_m(G\setminus S)-
\sum_{v\in S}\big(h(v)-\frac{m}{c-1}-\xi(v)\big)-\frac{1}{c-1}\Omega_m(G[S])+
\frac{c-2}{c-1}\min\{\lfloor\sum_{v\in S}\frac{h(v)}{2}\rfloor,|S|-\Omega_m(G[S])\}
< 2.
\end{equation*}
Hence $\Omega_m(H) =m$ and the theorem holds.
Now, suppose that $F$ is not $m$-sparse.
Remove some of the edges of the $m$-tree-connected components of $F$ until the resulting graph $F'$
becomes $m$-sparse such that their $m$-tree-connected components have the same vertices. 
For every $m$-tree-connected component $F[A]$ of $F'$, we still have $d_{F'}(A)=d_F(A)$.
It is enough, now, to apply the theorem on $F'$ and finally add the edges of $E(F)\setminus E(F')$ 
to that explored $m$-tree-connected factor. 
}\end{proof}
\begin{remark}
{In  Theorem~\ref{thm:gen:tough-enough} we could select $c$ and $\xi$ depending  on $S$.
In addition, we could replace  the estimation $\frac{c-2}{c-1}\min\{\lfloor\sum_{v\in S}\frac{h(v)}{2}\rfloor,m|S|-\Omega_m(G[S])-e_F(S)\}$ when $F$ is $m$-sparse.
}\end{remark}
When $\xi=0$, the theorem becomes much simpler as the next corollary.
\begin{cor}\label{cor:gen:tough-enough}
{Let $G$ be a graph with a factor $F$ of which every $m$-tree-connected component $C$ satisfies
$|V(C)|+\frac{c-1}{2m}d_F(C) \ge c$  
 and $c \ge 2m+1$.
Let $h$ be a nonnegative integer-valued function on $V(G)$.
If for all $S\subseteq V(G)$,
$$ \Omega_m(G\setminus S) < \sum_{v\in S}\big(\frac{c}{2c-2}h(v)-\frac{m}{c-1}\big)+m+1+\frac{1}{c-1}\Omega_m(G[S]),$$
then $G$ has an $m$-tree-connected factor $H$ containing $F$ 
such that for each vertex $v$, 
$d_H(v)\le h(v)+d_F(v) .$
}\end{cor}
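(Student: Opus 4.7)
The plan is to derive this corollary by specializing Theorem~\ref{thm:gen:tough-enough} to the choice $\xi \equiv 0$ on $V(G)$.

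First, I observe that $\xi\equiv 0$, which trivially takes values in $[0,m]$, meets the structural requirement on $F$ that Theorem~\ref{thm:gen:tough-enough} imposes. Indeed, with $\xi\equiv 0$, the condition $\sum_{v\in V(C)}\xi(v)\ge m-\frac{m}{c-1}(|V(C)|-1)-\frac{1}{2}d_F(C)$ for each $m$-tree-connected component $C$ of $F$ rearranges (multiplying through by $\frac{c-1}{m}$ and adding $1$) to $|V(C)|+\frac{c-1}{2m}d_F(C)\ge c$, which is exactly the corollary's assumption.

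Second, I would verify that the corollary's hypothesis on every $S\subseteq V(G)$ implies the corresponding hypothesis of Theorem~\ref{thm:gen:tough-enough} at $\xi\equiv 0$. After cancelling the common terms $-\sum_{v\in S}\frac{m}{c-1}+m+1+\frac{1}{c-1}\Omega_m(G[S])$ on both sides, the desired implication reduces to showing
$$\sum_{v\in S}\frac{c}{2c-2}\,h(v) \ \le\ \sum_{v\in S}h(v)\ -\ \frac{c-2}{c-1}\min\Big\{\Big\lfloor\sum_{v\in S}\tfrac{h(v)}{2}\Big\rfloor,\; m|S|-\Omega_m(G[S])\Big\}.$$
Noting that $1-\frac{c}{2c-2}=\frac{c-2}{2(c-1)}$, after dividing by $\frac{c-2}{c-1}$ (which is positive when $c>2$), this becomes
$$\sum_{v\in S}\tfrac{h(v)}{2}\ \ge\ \min\Big\{\Big\lfloor\sum_{v\in S}\tfrac{h(v)}{2}\Big\rfloor,\; m|S|-\Omega_m(G[S])\Big\},$$
which is immediate because the minimum is bounded above by its first argument, which is itself at most $\sum_{v\in S}h(v)/2$; the boundary case $c=2$ is vacuous since both sides of the displayed inequality vanish.

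With both hypotheses of Theorem~\ref{thm:gen:tough-enough} now verified, the theorem directly supplies the desired $m$-tree-connected factor $H\supseteq F$ with $d_H(v)\le h(v)+d_F(v)$ for every vertex $v$. No serious combinatorial obstacle arises; the entire argument amounts to a routine algebraic unwinding of the more general theorem in the special case $\xi\equiv 0$.
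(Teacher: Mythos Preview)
Your proposal is correct and follows exactly the paper's approach: specialize Theorem~\ref{thm:gen:tough-enough} with $\xi\equiv 0$, noting that the corollary's hypothesis on each $m$-tree-connected component $C$ rearranges to $0\ge m-\frac{m}{c-1}(|V(C)|-1)-\frac12 d_F(C)$. The paper's proof is a one-line reference to Theorem~\ref{thm:gen:tough-enough}, whereas you additionally spell out (correctly) the algebraic verification that the corollary's right-hand side is dominated by that of the theorem; this extra detail is harmless and accurate.
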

\begin{proof}
{By the assumption, for every $m$-tree-connected component $C$ of $F$,  we must  have 
$0\ge m-\frac{m}{c-1}(|V(C)|-1)-\frac{1}{2}d_F(C)$. Thus it is enough to apply Theorem~\ref{thm:gen:tough-enough} with $\xi(v)=0$.
}\end{proof}
The following corollary is an improvement of Corollary~\ref{cor:sparse:linear-forest} for graphs  with higher toughness.
\begin{cor}\label{cor:path-cycle}
{Let $G$ be a simple graph with a factor $F$ satisfying $\Delta(F)\le 2m$. 
Then $F$ can be extended to an $m$-tree-connected factor $H$ satisfying $\Delta(H)\le 2m+1$,
if for all $S\subseteq V(G)$, $$\Omega_m(G\setminus S)\le \frac{1}{4m}|S|+m.$$
}\end{cor}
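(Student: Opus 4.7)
The plan is to apply Theorem~\ref{thm:gen:tough-enough} with $c = 2m+1$, $h(v) = 2m+1 - d_F(v)$, and $\xi(v) = m - \tfrac{1}{2}d_F(v)$. These are admissible since $\Delta(F)\le 2m$ gives $h(v)\ge 1$ and $\xi(v)\in [0,m]$, and with these values the conclusion $d_H(v)\le h(v)+d_F(v) = 2m+1$ is precisely the required degree bound on $H$.

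The first step I would carry out is the verification of the $\xi$-hypothesis. For any $m$-tree-connected subgraph $C$ of $F$, a direct expansion gives $\sum_{v\in V(C)} \xi(v) = m|V(C)| - e_F(V(C)) - \tfrac{1}{2}d_F(C)$, so the required inequality reduces to $e_F(V(C))\le \tfrac{(2m+1)(|V(C)|-1)}{2}$. This is where the simplicity of $G$ enters decisively: when $|V(C)|\le 2m+1$, the bound follows from $e_F(V(C))\le \binom{|V(C)|}{2}$; when $|V(C)|\ge 2m+1$, it follows from the degree bound $2e_F(V(C))\le \sum_{v\in V(C)} d_F(v) \le 2m|V(C)|$. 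A routine calculation shows each of these estimates is $\le \tfrac{(2m+1)(|V(C)|-1)}{2}$ in its respective range, with equality precisely when $|V(C)| = 2m+1$.

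The second step is to confirm that the assumption $\Omega_m(G\setminus S)\le \tfrac{|S|}{4m}+m$ implies the inequality demanded by Theorem~\ref{thm:gen:tough-enough} for every $S \subseteq V(G)$. With the chosen values one finds $h(v)-\tfrac{m}{c-1}-\xi(v) = m+\tfrac{1}{2}-\tfrac{1}{2}d_F(v)$, and after using $\min\{\cdot,\cdot\}\le \tfrac{1}{2}\sum_{v\in S}h(v)$ together with $\tfrac{c-2}{c-1}=\tfrac{2m-1}{2m}$ and gathering terms, the right-hand side of that inequality becomes at least
\[
\frac{2m+1}{4m}|S| \;-\; \frac{\sum_{v\in S} d_F(v)}{4m} \;+\; m+1 \;+\; \frac{\Omega_m(G[S])}{2m}.
\]
Applying $\sum_{v\in S} d_F(v)\le 2m|S|$ and $\Omega_m(G[S])\ge 0$ collapses this to at least $\tfrac{|S|}{4m}+m+1$, which strictly exceeds the hypothesized upper bound $\tfrac{|S|}{4m}+m$ on $\Omega_m(G\setminus S)$.

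The main obstacle is pinpointing the right $c$: the algebra is designed so that the coefficient of $|S|$ in the final lower bound equals exactly $\tfrac{1}{4m}$, and one sees that the equation $\tfrac{c-2m}{2(c-1)}=\tfrac{1}{4m}$ forces $c = 2m+1$. The corresponding $\xi$ must then saturate the $\xi$-condition at single-vertex subgraphs of $F$ while remaining compatible with larger $m$-tree-connected subgraphs; this is precisely where the simplicity hypothesis is used, since otherwise the required edge bound $e_F(V(C))\le \tfrac{(2m+1)(|V(C)|-1)}{2}$ can fail for small $|V(C)|$. With both hypotheses verified, Theorem~\ref{thm:gen:tough-enough} yields the desired $m$-tree-connected factor $H\supseteq F$ with $\Delta(H)\le 2m+1$.
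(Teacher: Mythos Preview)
Your proof is correct and follows the same strategy as the paper's: apply Theorem~\ref{thm:gen:tough-enough} with $c=2m+1$ and $h(v)=2m+1-d_F(v)$, then show the per-vertex quantity $\frac{2m+1}{4m}h(v)-\frac{1}{2}-\xi(v)$ is at least $\frac{1}{4m}$. The only difference is your choice of $\xi$: you take the simple $\xi(v)=m-\tfrac{1}{2}d_F(v)$ and use simplicity to bound $e_F(V(C))\le\tfrac{(2m+1)(|V(C)|-1)}{2}$, whereas the paper uses the more elaborate $\xi(v)=\tfrac{1}{|V(C_v)|}\max\{0,\,m-\tfrac{1}{2}(|V(C_v)|-1)-\tfrac{1}{2}d_F(v,C_v)\}$ (with $C_v$ the $m$-tree-connected component of $F$ through $v$), which satisfies the $\xi$-condition by construction and uses simplicity only to bound $d_F(v,C_v)\ge d_F(v)-(|V(C_v)|-1)$. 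Your choice is arguably cleaner; both routes lead to the same final estimate.
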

\begin{proof}
{For each vertex $v$, define $h(v)=2m+1-d_F(v)$
and $\xi(v)=\frac{1}{|V(C_v)|}\max\{0,m-\frac{1}{2}(|V(C_v)|-1)-\frac{1}{2}d_F(v,C_v)\}$, where $C_v$ is the
 $m$-tree-connected component of $F$ including $v$, and also $d_F(v,C_v)$ denotes the number of edge of $F$ incident to $v$ having exactly one vertex in $C$. According to this definition, it is easy to see that $\sum_{v\in V(C)}\xi(v)\ge m-\frac{1}{2}(|V(C)|-1)-\frac{1}{2}d_F(C)$, where  $C$ is an  $m$-tree-connected component of $F$.
Since $F$ has no multiple edges, we must have $d_F(v, C)\ge d_F(v)-(|V(C_c)|-1)$.
Thus $$\xi(v)\le  \frac{1}{|V(C_v)|}\max\{0, m-\frac{1}{2}d_F(v)\}=\frac{1}{|V(C_v)|}(m-\frac{1}{2}d_F(v)).$$
This implies that $$\frac{2m+1}{4m}h(v)-\frac{1}{2}-\xi(v)\ge 
 \frac{2m+1}{4m}(2m+1-d_F(v))-\frac{1}{|V(C_v)|}(m-\frac{1}{2}d_F(v))\ge  \frac{1}{4m}.$$
 Therefore, by applying Theorem~\ref{thm:gen:tough-enough} with $c=2m+1$, the factor $F$ can be extended to an $m$-tree-connected factor $H$ such that for each vertex $v$,  $d_H(v)\le h(v)+d_F(v)= 2m+1$. Hence the assertion holds.
}\end{proof}
\section{Highly tree-connected $\{f,f+1\}$-factors including given $f$-factors}
\subsection{Tree-connected $\{f,f+1\}$-factors}
\label{sec:f,f+1}
When we consider the special cases $h(v)\le 1$, Corollary~\ref{cor:gen:tough-enough} becomes simpler as the following result.
This theorem plays an essential role of the results of  this section.
\begin{thm}\label{thm:tough:1-Extend}
{Let $G$ be a graph with a factor $F$ of which every $m$-tree-connected component $C$  satisfies
$|V(C)|+\frac{c-1}{2m}d_F(C) \ge c$  
 and $c \ge 2m+1$.
If for all $S\subseteq V(G)$,
$$\omega(G\setminus S) \le 
\frac{c-2m}{2m(c-1)}|S|+1$$
then $G$ has an $m$-tree-connected factor $H$ containing $F$ 
such that for each vertex $v$, 
$d_H(v) \in \{d_F(v), d_F(v)+1\}$, and $d_H(u)=d_F(u)$ for an arbitrary given vertex $u$.
}\end{thm}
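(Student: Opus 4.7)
The plan is to apply Corollary~\ref{cor:gen:tough-enough} with the nonnegative integer-valued function $h$ given by $h(v)=1$ for every $v\ne u$ and $h(u)=0$. The hypotheses on $F$ and on $c$ in the corollary are identical to those of the present theorem, so its conclusion will produce an $m$-tree-connected factor $H\supseteq F$ with $d_H(v)\le h(v)+d_F(v)$. Combined with $d_H(v)\ge d_F(v)$ (since $H\supseteq F$), this immediately gives $d_H(v)\in\{d_F(v),d_F(v)+1\}$ for every $v\ne u$ and $d_H(u)=d_F(u)$, as required.

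The task therefore reduces to verifying the numerical condition of Corollary~\ref{cor:gen:tough-enough}, namely
\begin{equation*}
\Omega_m(G\setminus S)<\sum_{v\in S}\Bigl(\tfrac{c}{2(c-1)}h(v)-\tfrac{m}{c-1}\Bigr)+m+1+\tfrac{1}{c-1}\Omega_m(G[S]),
\end{equation*}
for every $S\subseteq V(G)$. A direct evaluation of the sum with this choice of $h$ yields $\tfrac{c-2m}{2(c-1)}|S|$ when $u\notin S$ and the same quantity minus $\tfrac{c}{2(c-1)}$ when $u\in S$. Multiplying the assumed toughness bound by $m$ gives $m\omega(G\setminus S)\le\tfrac{c-2m}{2(c-1)}|S|+m$, so after substitution the required inequality becomes
\begin{equation*}
\Omega_m(G\setminus S)-m\omega(G\setminus S)<1+\tfrac{1}{c-1}\Omega_m(G[S])-\tfrac{c}{2(c-1)}\mathbf{1}_{u\in S}.
\end{equation*}

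The main obstacle is to control the excess $\Omega_m(G\setminus S)-m\omega(G\setminus S)$, which is always a nonnegative integer and vanishes precisely when each connected component of $G\setminus S$ is itself $m$-tree-connected. The structural assumption $|V(C)|+\tfrac{c-1}{2m}d_F(C)\ge c$ on each $m$-tree-connected component $C$ of $F$ is what keeps this excess small: a non-$m$-tree-connected component of $G\setminus S$ must contain at least two $m$-tree-connected components of $F$, and each of these is either spatially large or forced to send many $F$-edges outside itself, a substantial fraction of which must land in $S$ and thereby inflate the term $\tfrac{1}{c-1}\Omega_m(G[S])$ on the right-hand side. A component-by-component accounting in the spirit of the proof of Theorem~\ref{thm:gen:tough-enough}, using Lemmas~\ref{lem:GS,G[S,F]} and~\ref{lem:gen}, then matches each unit of excess against a corresponding contribution, while the mild additive term $-\tfrac{c}{2(c-1)}$ that appears when $u\in S$ is at most $\tfrac{1}{2}$ and is absorbed by the inequality $\Omega_m(G[S])\ge m$ together with the slack in the toughness hypothesis. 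With the excess bound in hand, Corollary~\ref{cor:gen:tough-enough} applies and delivers the desired factor $H$.
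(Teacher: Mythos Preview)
Your plan to apply Corollary~\ref{cor:gen:tough-enough} with $h(u)=0$ and $h(v)=1$ otherwise is exactly right, and your computation of the right-hand side is correct. The genuine gap is in the last paragraph: you need to verify the inequality for $\Omega_m(G\setminus S)$, but the hypothesis of the theorem only bounds $\omega(G\setminus S)$. For $m\ge 2$ these quantities are not comparable in the direction you need, and the sketch you give does not close the gap. In particular, the claim that ``a non-$m$-tree-connected component of $G\setminus S$ must contain at least two $m$-tree-connected components of $F$'' is false: such a component may meet only a fragment of a single $m$-tree-connected component of $F$ (the rest of that component lying in $S$), and that fragment can itself split into many $m$-tree-connected pieces inside $G\setminus S$. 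There is no mechanism in Lemmas~\ref{lem:GS,G[S,F]} or~\ref{lem:gen} to offset the excess $\Omega_m(G\setminus S)-m\,\omega(G\setminus S)$ by $\tfrac{1}{c-1}\Omega_m(G[S])$; that excess depends on the internal edge structure of $G\setminus S$, which the toughness hypothesis does not constrain.

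The paper avoids this difficulty by a one-line multigraph trick: replace $G$ by $G'$, the union of $m$ parallel copies of $G$ on the same vertex set. Then every connected subgraph of $G$ becomes $m$-tree-connected in $G'$, and one has the exact identity $\Omega_m(G'\setminus S)=m\,\omega(G\setminus S)$ for every $S$. With this, the hypothesis of Corollary~\ref{cor:gen:tough-enough} for $G'$ follows immediately from the toughness assumption (your own calculation, together with $\Omega_m(G'[S])\ge m$ when $S\neq\emptyset$, handles the $-\tfrac{c}{2(c-1)}$ term when $u\in S$). The corollary then yields an $m$-tree-connected factor $H$ of $G'$ containing $F$ with $d_H(v)\le d_F(v)+h(v)$. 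Finally, since $h\le 1$, the factor $H$ adds at most one edge at each vertex beyond $F$, hence can use at most one of the $m$ parallel copies of any edge; thus $H$ is already a factor of $G$.
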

\begin{proof}
{Let  $G'$ be the union of $m$ copies of $G$ with the same vertex set.
It is easy to check that $\frac{1}{m}\Omega_m(G'\setminus S) =\omega(G\setminus S)$ for every $S\subseteq V(G)$. 
Define $h(u)=0$ and $h(v)=1$ for each vertex $v$ with $v\neq u$.
By Corollary~\ref{cor:gen:tough-enough}, the graph 
$G'$ has an $m$-tree-connected factor $H$ containing $F$ such that for each vertex $v$, 
$d_{H}(v)\le h(v)+d_F(v)$.
According to the construction,  the graph $H$ must have no multiple edges of $E(G')\setminus E(F)$.
Hence $H$ itself is a factor of $G$ and the proof is completed.
}\end{proof}
The following corollary shows an application of Theorem~\ref{thm:tough:1-Extend}.
\begin{cor}\label{cor:f,f+1:lower-toughness}
{Let $G$ be a simple graph and let $f$ be a positive integer-valued function on $V(G)$ with $f\ge a \ge 2m$, where $a$ is a positive integer. Assume that $G$ contains a factor $F$ such that for all vertices $v$, $d_H(v)=f(v)$, except possibly for a vertex $u$ with $d_H(u)=f(u)+1$. If for all $S\subseteq V(G)$,
$$\omega(G\setminus S) \le 
\frac{a +1-2m}{2ma}|S|+1,$$ then $G$ has an $m$-tree-connected  factor $H$ containing $F$ 
such that for each vertex $v$,  $d_H(v)\in \{f(v),f(v)+1\}$.
}\end{cor}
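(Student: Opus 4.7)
The plan is to invoke Theorem~\ref{thm:tough:1-Extend} on the given factor $F$ with the parameter choice $c = a+1$. With this choice one has $\frac{c-2m}{2m(c-1)} = \frac{a+1-2m}{2ma}$, so the toughness-type bound in the corollary matches the hypothesis of Theorem~\ref{thm:tough:1-Extend} verbatim, and the requirement $c \ge 2m+1$ reduces to $a \ge 2m$, which is assumed. Thus only one thing remains to check: that every $m$-tree-connected component $C$ of $F$ satisfies $|V(C)| + \frac{a}{2m}\, d_F(C) \ge a+1$.

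This verification is the main step and I expect it to be the only real obstacle. Since $F$ is simple and each vertex has degree at least $a$ in $F$ (only the exceptional vertex $u$ can have a larger degree), summing degrees over $V(C)$ gives $2 e_F(C) + d_F(C) \ge a|V(C)|$, while $e_F(C) \le \binom{|V(C)|}{2}$, yielding $d_F(C) \ge |V(C)|(a - |V(C)| + 1)$. I would then split into three cases. If $|V(C)| = 1$, then $d_F(C) \ge a$ and the inequality reduces to $1 + \frac{a^2}{2m} \ge a+1$, i.e.\ $a \ge 2m$. If $|V(C)| \ge a+1$, the inequality is immediate. If $2 \le |V(C)| \le a$, combining the lower bound on $d_F(C)$ with $\frac{a}{2m} \ge 1$ gives
$$|V(C)| + \tfrac{a}{2m}\, d_F(C) \;\ge\; |V(C)|\bigl(a - |V(C)| + 2\bigr),$$
and the quadratic $n(a-n+2)$ is minimized on the integer interval $[2,a]$ at the endpoints, where it equals $2a \ge a+1$.

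Having verified both hypotheses, Theorem~\ref{thm:tough:1-Extend} produces an $m$-tree-connected factor $H \supseteq F$ with $d_H(v) \in \{d_F(v),\, d_F(v)+1\}$ for every vertex $v$, together with the freedom to force $d_H(u) = d_F(u)$ at any single preselected vertex $u$. I would select this preselected vertex to be the exceptional vertex of $F$ if one exists (otherwise any vertex works). Then $d_H(u) = d_F(u) = f(u)+1 \in \{f(u), f(u)+1\}$, and for every other vertex $v$ one has $d_H(v) \in \{d_F(v), d_F(v)+1\} = \{f(v), f(v)+1\}$, as required. Everything outside the component-size case analysis in the middle paragraph is a direct bookkeeping match between the two statements.
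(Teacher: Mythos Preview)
Your proposal is correct and follows the same overall route as the paper: apply Theorem~\ref{thm:tough:1-Extend} with $c=a+1$, verify the component condition using simplicity of $F$, and use the free vertex $u$ in Theorem~\ref{thm:tough:1-Extend} to absorb the exceptional vertex of $F$.

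The only difference is in how the component inequality $|V(C)|+\frac{a}{2m}d_F(C)\ge a+1$ is checked. You sum the degrees over all of $V(C)$, combine with $e_F(C)\le\binom{|V(C)|}{2}$, and then run a three-case analysis on $|V(C)|$. The paper instead picks a single vertex $v\in V(C)$ and observes that simplicity forces $d_F(C)\ge d_F(v)-(|V(C)|-1)\ge \delta(F)-(|V(C)|-1)$, which together with $\frac{a}{2m}\ge 1$ and $\delta(F)\ge a$ immediately gives $|V(C)|+\frac{a}{2m}d_F(C)\ge \delta(F)+1\ge a+1$ (and the case $|V(C)|>\delta(F)+1$ is trivial). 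Your bound $d_F(C)\ge |V(C)|(a-|V(C)|+1)$ is actually sharper than the paper's for $|V(C)|\ge 2$, but the extra strength is not needed here, so the paper's one-line argument is more economical. Otherwise the two proofs are identical.
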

\begin{proof}
{Since $F$ is simple, it is easy to check that $d_F(C)\ge \delta (F)-(|V(C)|-1)$, where $C$ is an $m$-tree-connected subgraph.
Since $a \ge 2m$, we must have 
$|V(C)|+\frac{a}{2m}d_F(C)\ge |V(C)| + (\delta(F)+1-|V(C)|)\ge a+1$.
Thus by applying Theorem~\ref{thm:tough:1-Extend} with $c=a+1$, 
the graph $G$ has an $m$-tree-connected factor $H$ containing $F$ 
such that  for each vertex $v$, $d_H(v) \le d_F(v)+1$, and also $d_H(u)=d_F(u)$.
This implies that $H$ is an $m$-tree-connected $\{f,f+1\}$-factor. 
}\end{proof}
\begin{cor}\label{cor:f,f+1}
{Let $G$ be a graph, let $b$ be a positive integer, and 
let $f$ be a positive integer-valued function on $V(G)$ with $2m\le f\le b$.
If $G$ is $b^2$-tough  and $|V(G)|\ge b^2$, then it has an $m$-tree-connected  $\{f, f+1\}$-factor. 
}\end{cor}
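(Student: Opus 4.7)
The plan is to deduce Corollary~\ref{cor:f,f+1} by chaining together Katerinis's $f$-factor theorem (Theorem~\ref{intro:thm:f}) and the toughness-based extension result Corollary~\ref{cor:f,f+1:lower-toughness}, which together cover exactly the two steps needed: first \emph{produce} an almost-$f$-factor, then \emph{upgrade} it to an $m$-tree-connected $\{f,f+1\}$-factor. Since the hypotheses $b^2$-tough and $|V(G)|\ge b^2$ are precisely what Katerinis requires, the existence step is immediate.

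Concretely, I would first invoke Theorem~\ref{intro:thm:f} to obtain a factor $F$ of $G$ with $d_F(v)=f(v)$ for every vertex $v$, except possibly for one vertex $u$ at which $d_F(u)=f(u)+1$. Because $f\ge 2m$ everywhere, the resulting $F$ satisfies $\delta(F)\ge 2m$, which is the hypothesis needed to apply Corollary~\ref{cor:f,f+1:lower-toughness} with parameter $a=2m$. That corollary will then yield an $m$-tree-connected factor $H\supseteq F$ with $d_H(v)\in\{f(v),f(v)+1\}$ for each $v$ (the exceptional vertex $u$ being handled by the ``fixed vertex'' clause of Theorem~\ref{thm:tough:1-Extend} so that $d_H(u)=d_F(u)=f(u)+1$, which still lies in $\{f(u),f(u)+1\}$).

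The only thing left to check, and what I would do last, is the numerical compatibility of the two toughness hypotheses. Specialising the bound of Corollary~\ref{cor:f,f+1:lower-toughness} to $a=2m$ gives the required inequality
\[
\omega(G\setminus S)\;\le\;\tfrac{1}{4m^{2}}|S|+1\qquad\text{for every }S\subseteq V(G).
\]
Since $f\ge 2m$ and $f\le b$ force $b\ge 2m$, we have $b^{2}\ge 4m^{2}$, so $b^{2}$-toughness gives $\omega(G\setminus S)\le\max\{1,|S|/b^{2}\}\le 1+|S|/(4m^{2})$ for every $S$ (the case $S=\varnothing$ using that $|V(G)|\ge b^{2}$ rules out the null graph and guarantees $\omega(G)\le 1$). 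This is the routine, main-obstacle-free part.

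The only conceptual point to be careful about is that Corollary~\ref{cor:f,f+1:lower-toughness} is stated for simple graphs, whereas the present corollary allows multiple edges. The remedy is to appeal directly to the underlying Theorem~\ref{thm:tough:1-Extend}: one only needs that every $m$-tree-connected component $C$ of $F$ satisfies $|V(C)|+\tfrac{c-1}{2m}d_F(C)\ge c$ with $c=2m+1$, and this follows from $\delta(F)\ge 2m$ (if $|V(C)|=1$ then $d_F(C)\ge 2m$; otherwise one uses $|V(C)|\ge 2$ together with the $m$-tree-connectedness of $C$ to bound the internal degree, leaving enough edges to leave $C$). This is the one place where a short case analysis is required, and is the only genuine obstacle; everything else is bookkeeping.
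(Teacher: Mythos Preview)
Your overall plan---first invoke Theorem~\ref{intro:thm:f} to get an almost-$f$-factor $F$, then apply Corollary~\ref{cor:f,f+1:lower-toughness} with $a=2m$ to upgrade $F$ to an $m$-tree-connected $\{f,f+1\}$-factor---is exactly the paper's proof, and your numerical check that $b^2$-toughness implies the bound $\omega(G\setminus S)\le |S|/(4m^{2})+1$ is correct.

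The one genuine gap is your treatment of multiple edges. Your proposed remedy, bypassing Corollary~\ref{cor:f,f+1:lower-toughness} and verifying the hypothesis $|V(C)|+d_F(C)\ge 2m+1$ of Theorem~\ref{thm:tough:1-Extend} directly, fails in the case $|V(C)|\ge 2$: in a multigraph $F$ an $m$-tree-connected component $C$ can consist of two vertices joined by $2m$ parallel edges, giving $d_F(C)=0$ while both vertices still have $F$-degree $2m$, so the inequality $2+0\ge 2m+1$ is false for every $m\ge 1$. The argument ``$m$-tree-connectedness bounds the internal degree'' breaks down precisely because parallel edges allow arbitrarily large internal degree on two vertices. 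The paper's fix is much simpler and avoids this case analysis entirely: since $\omega(G\setminus S)$ depends only on the underlying simple graph, deleting all multiple edges from $G$ preserves $b^2$-toughness and the order bound, so one may assume $G$ is simple from the outset; then $F$ is automatically simple and Corollary~\ref{cor:f,f+1:lower-toughness} applies as stated.
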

\begin{proof}
{We may assume that $G$ is a simple graph, by deleting multiple edges from $G$ (if necessary). 
By Theorem~\ref{intro:thm:f}, the graph $G$ has a factor $F$ such that for each vertex $v$, $d_F(v)=f(v)$, except possibly for a vertex $u$ with $d_F(u)=f(u)+1$.
By  Corollary~\ref{cor:f,f+1:lower-toughness}, the graph $G$ has an $m$-tree-connected  $\{f,f+1\}$-factor. 
}\end{proof}
\subsection{Tree-connected $\{a,a+1\}$-factors in $a$-tough graphs}
Enomoto, Jackson, Katerinis, and Saito (1985)~\cite{Enomoto-Jackson-Katerinis-Saito-1985} showed that every $r$-tough graph $G$ of order at least $r+1$ with $r|V(G)|$ even admits an $r$-factor. 
For the case that $r|V(G)|$ is odd, the same arguments can imply that the graph $G$ admits 
a factor whose degrees are $r$, except for a vertex with degree  $r+1$.
A combination of  Theorem~\ref{thm:tough:1-Extend} and this result can conclude the next results.
\begin{cor}\label{cor:toughness:r,r+1}
{Every $rt$-tough graph $G$ of order at least $r+1$ has an $m$-tree-connected $\{r,r+1\}$-factor, where 
$r\ge 2m$ and $t= \max\{1, \frac{2m}{r+1-2m}\}$.
}\end{cor}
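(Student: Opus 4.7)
My plan is to mirror the proof of Corollary~\ref{cor:f,f+1}, but starting from the $r$-factor theorem of Enomoto, Jackson, Katerinis, and Saito rather than from Katerinis's $f$-factor theorem. First I would assume without loss of generality that $G$ is simple, since deleting multiple edges preserves $\omega(G\setminus S)$ for every $S\subseteq V(G)$ and hence preserves $rt$-toughness.

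Because $t\ge 1$, the graph $G$ is at least $r$-tough of order at least $r+1$, so the Enomoto--Jackson--Katerinis--Saito theorem produces a factor $F$ of $G$ with $d_F(v)=r$ at every vertex, except possibly one vertex $u$ with $d_F(u)=r+1$ in the case that $r|V(G)|$ is odd. I would then feed this $F$ into Corollary~\ref{cor:f,f+1:lower-toughness} with $a=r$ and the constant function $f\equiv r$; the hypothesis $f\ge a\ge 2m$ is immediate from $r\ge 2m$, and the factor $F$ produced above is exactly of the form required by that corollary.

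The remaining arithmetic is to check the isolated-set bound $\omega(G\setminus S)\le \frac{r+1-2m}{2mr}|S|+1$ for all $S\subseteq V(G)$. If $\omega(G\setminus S)\le 1$ this is trivial; otherwise $rt$-toughness gives $\omega(G\setminus S)\le |S|/(rt)$, so it suffices to verify $1/(rt)\le (r+1-2m)/(2mr)$, which rearranges precisely to $t\ge 2m/(r+1-2m)$—exactly the choice of $t$ built into the statement. With all hypotheses in place, Corollary~\ref{cor:f,f+1:lower-toughness} then delivers an $m$-tree-connected factor $H$ of $G$ with $d_H(v)\in\{r,r+1\}$ for every vertex $v$, which is the desired $\{r,r+1\}$-factor.

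I do not anticipate a serious obstacle: the argument is essentially a dovetailing of two previously established results, and the only delicate point is matching the toughness constant to the isolated-set bound of Corollary~\ref{cor:f,f+1:lower-toughness} and ensuring that both parities of $r|V(G)|$ fit its hypothesis, which it does because that corollary explicitly allows a single exceptional vertex of degree $f(u)+1=r+1$.
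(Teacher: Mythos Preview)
Your proposal is correct and follows essentially the same approach as the paper's proof: reduce to the simple case, invoke the Enomoto--Jackson--Katerinis--Saito factor result to obtain the near-$r$-factor $F$, and then apply Corollary~\ref{cor:f,f+1:lower-toughness} with $a=r$ and $f\equiv r$. You have additionally spelled out the arithmetic verifying that $rt$-toughness implies the bound $\omega(G\setminus S)\le \frac{r+1-2m}{2mr}|S|+1$, which the paper leaves implicit.
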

\begin{proof}
{We may assume that $G$ is a simple graph, by deleting multiple edges from $G$ (if necessary). 
Let $F$ be a factor of $G$ such that each of whose vertices has degree $r$, except for at most one vertex $u$ with degree $r+1$~\cite{Enomoto-Jackson-Katerinis-Saito-1985}.
By applying Corollary~\ref{cor:f,f+1:lower-toughness}, 
the graph $G$ has an $m$-tree-connected factor  $\{r,r+1\}$-factor. 
}\end{proof}
\begin{cor}
{Every $4m^2$-tough graph of order at least $2m+1$ has an $m$-tree-connected $\{2m,2m+1\}$-factor.
}\end{cor}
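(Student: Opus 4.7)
The plan is to observe that this statement is the special case $r=2m$ of Corollary~\ref{cor:toughness:r,r+1}, so the proof will amount to substituting $r=2m$ into the general formula and checking that both the toughness coefficient and the order hypothesis reduce to the stated numerical values. No new ideas are needed beyond that specialization.

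Concretely, I would first set $r=2m$ so that the hypothesis $r\ge 2m$ in Corollary~\ref{cor:toughness:r,r+1} is satisfied with equality. Then I would compute the constant
\[
t=\max\Bigl\{1,\ \tfrac{2m}{r+1-2m}\Bigr\}=\max\Bigl\{1,\ \tfrac{2m}{1}\Bigr\}=2m,
\]
since $m\ge 1$. This gives $rt=2m\cdot 2m=4m^{2}$, which matches the toughness assumed in the corollary statement. Similarly the order condition $|V(G)|\ge r+1$ becomes $|V(G)|\ge 2m+1$, again matching the hypothesis. Corollary~\ref{cor:toughness:r,r+1} then produces an $m$-tree-connected $\{r,r+1\}=\{2m,2m+1\}$-factor of $G$, which is exactly what we want.

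There is no real obstacle here; the only minor point worth noting explicitly is that the edge case $r+1-2m=1$ is precisely what forces the second term in the maximum to be the active one and thereby pins the toughness requirement at $4m^{2}$ (rather than something smaller that one might hope for with larger $r$). If desired, I would add a one-line remark that this corollary represents the ``most demanding'' instance of Corollary~\ref{cor:toughness:r,r+1}, since the denominator $r+1-2m$ is as small as possible under the constraint $r\ge 2m$.
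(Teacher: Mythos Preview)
Your proposal is correct and follows exactly the same approach as the paper: both simply specialize Corollary~\ref{cor:toughness:r,r+1} to $r=2m$, noting that $t=\max\{1,2m\}=2m$ so $rt=4m^{2}$ and the order condition becomes $|V(G)|\ge 2m+1$.
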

\begin{proof}
{Apply Corollary~\ref{cor:toughness:r,r+1} with $r=2m$.
}\end{proof}
\begin{cor}{\rm(\cite{Ellingham-Nam-Voss-2002, Ellingham-Zha-2000}, see \cite{ClosedWalks})}\label{lem:r,r+1}
{Every $\max\{r,4\}$-tough graph of order at least $r+1$ admits a connected $\{r,r+1\}$-factor, 
where $r\ge 2$. 
}\end{cor}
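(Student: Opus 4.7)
The plan is to deduce this as the $m=1$ instance of Corollary~\ref{cor:toughness:r,r+1}. A $1$-tree-connected graph is precisely a connected graph (it possesses a spanning tree), so a $1$-tree-connected $\{r,r+1\}$-factor is exactly a connected $\{r,r+1\}$-factor. The hypothesis $r \ge 2m$ becomes $r \ge 2$, and the order bound $|V(G)|\ge r+1$ is assumed, so the only non-trivial thing to check is that our $\max\{r,4\}$-toughness hypothesis is at least as strong as the toughness hypothesis of Corollary~\ref{cor:toughness:r,r+1}.

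Setting $m=1$ in Corollary~\ref{cor:toughness:r,r+1}, the required toughness is
$$rt \;=\; r\cdot \max\Bigl\{1,\ \frac{2}{r-1}\Bigr\} \;=\; \max\Bigl\{r,\ \frac{2r}{r-1}\Bigr\}.$$
A short case analysis will show $rt \le \max\{r,4\}$ for every integer $r\ge 2$: for $r=2$ both sides equal $4$; for $r=3$ we get $rt = 3 \le 4$; and for $r\ge 4$ the fraction $\tfrac{2r}{r-1}$ is at most $r$, giving $rt = r = \max\{r,4\}$. Hence any $\max\{r,4\}$-tough graph is $rt$-tough, so Corollary~\ref{cor:toughness:r,r+1} applies and delivers a $1$-tree-connected, i.e.\ connected, $\{r,r+1\}$-factor.

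There is essentially no obstacle here: all the substantive work has been carried out in Corollary~\ref{cor:toughness:r,r+1} (and ultimately in Theorem~\ref{thm:tough:1-Extend} combined with the Enomoto--Jackson--Katerinis--Saito $r$-factor theorem). The only point worth articulating carefully is the elementary bookkeeping above which reconciles the two toughness constants.
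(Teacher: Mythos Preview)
Your proof is correct and follows exactly the paper's approach, which is simply to apply Corollary~\ref{cor:toughness:r,r+1} with $m=1$. The only addition is your explicit verification that $\max\{r,\frac{2r}{r-1}\}\le \max\{r,4\}$, which the paper leaves implicit.
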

\begin{proof}
{Apply Corollary~\ref{cor:toughness:r,r+1} with $m=1$.
}\end{proof}
\subsection{$2$-edge-connected $\{2f,2f+1\}$-factors}
\label{subsec:2-connected}
The following theorem gives a sufficient toughness condition for extending  
 $2$-factors with girth at least five to
$2$-connected $\{2,3\}$-factors. 
Ellingham and Zha~\cite{Ellingham-Zha-2000}  proved that 
 $2$-factors with girth at least three of $4$-tough graphs can be extended to connected $\{2,3\}$-factors. 
\begin{thm}\label{thm:2-edge-connected:tough}
{Let $G$ be a  graph with a factor $F$ of which  every component  is $2$-edge-connected and contains at least $c$ vertices with $c \ge 5$.
If for all $S\subseteq V(G)$,
$$\omega(G\setminus S) \le \frac{c-4}{4c-4}|S|+1,$$
then $G$ has a $2$-edge-connected factor $H$ containing $F$  such that for each vertex $v$, $d_H(v) \in \{d_F(v), d_F(v)+1\}$.
}\end{thm}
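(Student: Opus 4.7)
The plan is to reduce the statement to Corollary~\ref{cor:gen:tough-enough} with $m=2$ by an edge-doubling trick. Let $G'$ be the multigraph obtained from $G$ by duplicating every edge, and let $F'$ be the corresponding doubling of $F$. For any connected graph $X$ and any partition $P$ of $V(X)$, contracting the parts leaves a connected multigraph and hence $e_X(P)\ge |P|-1$; doubling yields $e_{2X}(P)\ge 2(|P|-1)$, so by the Nash-Williams/Tutte criterion $2X$ is $2$-tree-connected. Applying this to each (connected) component $C$ of $F$, the $2$-tree-connected components of $F'$ are precisely the doubles of the components of $F$, each having at least $c$ vertices and no external edges; this yields the structural hypothesis $|V(C')|+\frac{c-1}{4}d_{F'}(C')\ge c$ required by Corollary~\ref{cor:gen:tough-enough}.

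Since the $2$-tree-connected components of $G'\setminus S$ are the doubles of the connected components of $G\setminus S$, we have $\Omega_2(G'\setminus S)=2\omega(G\setminus S)$. Taking $h\equiv 1$, the inequality required by Corollary~\ref{cor:gen:tough-enough} becomes
$$2\omega(G\setminus S)<\frac{c-4}{2(c-1)}|S|+3+\frac{1}{c-1}\Omega_2(G'[S]),$$
which follows at once from the hypothesis $\omega(G\setminus S)\le \frac{c-4}{4(c-1)}|S|+1$ (multiply by $2$ and use $2<3$). Therefore $G'$ possesses a $2$-tree-connected factor $H'\supseteq F'$ with $d_{H'}(v)\le 1+2d_F(v)$ at every vertex.

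I would then project $H'$ back to $G$ by letting $H$ consist of all edges of $G$ that occur with any multiplicity in $H'$. Because $H'\setminus F'$ has maximum degree at most $1$, it is a matching in $G'$ using each edge-class only once, so $H\supseteq F$ and $d_H(v)\le d_F(v)+1$. To check $2$-edge-connectivity, fix a cut $(A,\bar A)$ of $V(G)$, set $k=e_F(A,\bar A)$ and $\ell=e_{H\setminus F}(A,\bar A)$, and note $e_{H'}(A,\bar A)=2k+\ell\ge 2$ from the $2$-edge-connectivity of $H'$. If $k=0$ then $\ell\ge 2$ and $e_H(A,\bar A)\ge 2$; otherwise the $2$-edge-connectivity of each component of $F$ forces $k\ne 1$, so $k\ge 2$ and again $e_H(A,\bar A)\ge 2$.

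The main obstacle I anticipate is this last step, passing from $2$-tree-connectivity of $H'$ inside the doubled graph $G'$ to $2$-edge-connectivity of its simple projection $H$ in $G$. The two ingredients that make it work are the tight degree bound $d_{H'}-d_{F'}\le 1$, which forces the non-$F$ edges used by $H'$ to constitute a matching containing no duplicated $G$-edge, together with the $2$-edge-connectivity of each component of $F$, which prevents any vertex-cut of $G$ from meeting $F$ in exactly one edge.
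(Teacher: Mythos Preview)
Your proof is correct and follows essentially the same strategy as the paper. The paper duplicates only the edges of $F$ and then invokes Theorem~\ref{thm:tough:1-Extend} (whose proof itself takes $m$ parallel copies of the ambient graph and applies Corollary~\ref{cor:gen:tough-enough}), so your direct doubling of all of $G$ followed by Corollary~\ref{cor:gen:tough-enough} with $m=2$ simply unwinds that intermediate step. The only notable difference is the final $2$-edge-connectivity argument: the paper observes that removing one copy of $F$ from $H'$ leaves $H$ with $H/F$ still $2$-tree-connected (hence $2$-edge-connected), whereas you give an equivalent direct cut count using $e_{H'}(A,\bar A)=2k+\ell$ and the fact that $k\neq 1$. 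One small wording slip: in your last paragraph ``vertex-cut'' should be ``edge-cut,'' but your actual argument is about bipartitions $(A,\bar A)$ and is correct.
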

\begin{proof}
{Duplicate the edges of $F$ in $G$ and call the resulting graphs $F'$ and $G'$.
Obviously, every $2$-tree-connected component of $F'$ contains at least $c$ vertices.
By applying Theorem~\ref{thm:tough:1-Extend} with $m=2$, the graph $G'$ has a $2$-tree-connected factor $H'$ containing $F'$ such that for each vertex $v$, $d_{H'}(v)\le d_{F'}(v)+1$. 
Remove a copy of $F$ from $H'$ and call the resulting graph $H$.
It is easy to check that $H/F$ is still $2$-tree-connected and for each vertex $v$, $d_{H}(v)\le d_{F}(v)+1$.
By the assumption, every component of $F$ is  $2$-edge-connected.
Hence $H$ itself is $2$-edge-connected and the proof is completed.
}\end{proof}
\begin{cor}
{Let $G$ be a simple graph and let $f$ be an integer-valued function on $V(G)$ with $f\ge 2$. 
If $G$ has a $2f$-factor and for all $S\subseteq V(G)$, $\omega(G\setminus S)\le \frac{1}{16}|S|+1$,
then $G$ has a $2$-edge-connected $\{2f,2f+1\}$-factor including a $2f$-factor.
}\end{cor}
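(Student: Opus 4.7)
The plan is to invoke Theorem~\ref{thm:2-edge-connected:tough} on the given $2f$-factor $F$ with the parameter $c=5$. Indeed, substituting $c=5$ into the fraction $\frac{c-4}{4c-4}$ yields exactly $\frac{1}{16}$, so the toughness-type hypothesis of the corollary is precisely the toughness-type hypothesis required by the prior theorem. All that remains, then, is to verify the two structural hypotheses the theorem imposes on $F$: that each component of $F$ is $2$-edge-connected and that each component has at least $c=5$ vertices.

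For the $2$-edge-connectivity, I would argue as follows. Since $d_F(v)=2f(v)$ is even at every vertex $v$, each component of $F$ is a connected graph with all even degrees, hence Eulerian. A connected Eulerian graph is bridgeless: if $e=uv$ were a bridge, then the component of $F \setminus e$ containing $u$ would have $u$ as its unique odd-degree vertex, contradicting the handshake lemma. For the lower bound on the number of vertices in a component, observe that $G$ (and hence $F$) is simple, and that $f(v)\ge 2$ gives $d_F(v)=2f(v)\ge 4$; therefore every component of $F$ contains at least $5$ vertices.

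With both hypotheses confirmed, I would apply Theorem~\ref{thm:2-edge-connected:tough} to conclude that $G$ has a $2$-edge-connected factor $H$ containing $F$ with
\[
d_H(v)\in\{d_F(v),\,d_F(v)+1\}=\{2f(v),\,2f(v)+1\}
\]
for every vertex $v$. Such an $H$ is the desired $2$-edge-connected $\{2f,2f+1\}$-factor that includes a $2f$-factor, namely $F$.

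The only conceptual observation that is not purely arithmetic is that the components of the given $2f$-factor are automatically $2$-edge-connected via the Eulerian/handshake argument; this is what makes the hypothesis of Theorem~\ref{thm:2-edge-connected:tough} essentially free here. Thus I do not anticipate any real obstacle: the proof is a clean specialization of the earlier theorem with $c=5$.
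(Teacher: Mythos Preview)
Your proposal is correct and follows essentially the same route as the paper's proof: take $F$ a $2f$-factor, note its components are Eulerian (hence $2$-edge-connected) and, being simple with minimum degree at least four, have at least five vertices, then apply Theorem~\ref{thm:2-edge-connected:tough} with $c=5$. The paper states the Eulerian-implies-bridgeless step without justification, whereas you supply the handshake argument, but otherwise the arguments coincide.
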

\begin{proof}
{Let $F$ be a $2f$-factor of $G$. Note that every component of $F$ must be Eulerian which is $2$-edge-connected.
Since $F$ is simple and has minimum degree at least four, every component of it contains at least five vertices.
Thus by applying Theorem~\ref{thm:2-edge-connected:tough} with $c=5$, the graph $F$ can be extended to a $2$-edge-connected $\{2f,2f+1\}$-factor $H$.
}\end{proof}
\begin{cor}\label{16:2-connected}
{Let $G$ be a simple graph having a $2$-factor $F$ with girth at least five. Then $G$ has a $2$-connected $\{2,3\}$-factor containing $F$, if for all $S\subseteq V(G)$, $\omega(G\setminus S)\le \frac{1}{16}|S|+1$.
}\end{cor}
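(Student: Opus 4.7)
My plan is to apply Theorem~\ref{thm:2-edge-connected:tough} with $c=5$ and then promote the resulting $2$-edge-connected factor to a $2$-connected one using the cycle structure of the $2$-factor $F$.

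First I will verify the hypotheses of Theorem~\ref{thm:2-edge-connected:tough}. Because $F$ is a $2$-factor, each of its components is a cycle and is therefore $2$-edge-connected; the girth hypothesis forces each such cycle to have at least five vertices, so the required condition with $c=5$ holds. The given toughness bound $\omega(G\setminus S)\le \tfrac{1}{16}|S|+1$ matches $\tfrac{c-4}{4c-4}|S|+1$ exactly when $c=5$. The theorem therefore supplies a $2$-edge-connected factor $H\supseteq F$ with $d_H(v)\in\{d_F(v),d_F(v)+1\}=\{2,3\}$ for every vertex $v$, i.e.\ a $2$-edge-connected $\{2,3\}$-factor containing $F$.

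The remaining task is to upgrade $2$-edge-connectivity to $2$-connectivity. Suppose for contradiction that some vertex $v$ is a cut vertex of $H$. A standard observation shows that in any $2$-edge-connected graph a cut vertex must have degree at least $3$, since a degree-$2$ cut vertex would have its two neighbours in distinct components of $H\setminus v$, making each of its incident edges a bridge. Hence $d_H(v)=3$. Let $a,b$ be the two neighbours of $v$ along the cycle $C_v$ of $F$ through $v$, and let $c$ be the third neighbour, contributed by the unique edge of $E(H)\setminus E(F)$ incident to $v$. Since $G$ is simple, $c\notin\{a,b\}$. The path $C_v-v$ lies inside $H\setminus v$ and joins $a$ to $b$, so $a,b$ belong to a common component $D_1$ of $H\setminus v$. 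If $c\in D_1$ as well, then $H\setminus v$ is connected and $v$ is not a cut vertex, a contradiction; hence $c$ lies in a distinct component $D_2$. Since any edge of $H$ between $D_2$ and $V(H)\setminus D_2$ must pass through $v$, and $c$ is the only neighbour of $v$ in $D_2$, the edge $vc$ is a bridge of $H$, contradicting $2$-edge-connectivity.

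The main (mild) point to justify along the way is that the third neighbour $c$ of $v$ is genuinely distinct from $a$ and $b$; this uses both that $G$ is simple and that the construction in Theorem~\ref{thm:2-edge-connected:tough}---duplicate $F$, apply Theorem~\ref{thm:tough:1-Extend} with $m=2$, then remove one copy of $F$---produces an $H$ as a subgraph of $G$ with no multi-edges at $v$. Once this is noted, the cut-vertex analysis above finishes the proof.
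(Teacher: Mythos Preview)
Your proof is correct and follows the same approach as the paper: apply Theorem~\ref{thm:2-edge-connected:tough} with $c=5$ to obtain a $2$-edge-connected $\{2,3\}$-factor $H\supseteq F$, and then argue that $H$ has no cut vertex. The paper's second step is shorter: it simply observes that in any $2$-edge-connected graph a cut vertex must send at least two edges into each of the $\ge 2$ components it separates, hence have degree $\ge 4$, so $\Delta(H)\le 3$ alone forces $2$-connectivity---your use of the cycle $C_v$ to place $a,b$ in the same component is correct but unnecessary.
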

\begin{proof}
{By applying Theorem~\ref{thm:2-edge-connected:tough} with $c=5$, 
the graph $F$ can be extended to a $2$-edge-connected $\{2,3\}$-factor $H$. Since $\Delta(H)\le 3$, the graph $H$ has no cut vertices, which can complete the proof.
}\end{proof}
\begin{cor}
{Every $16$-tough graph $G$ of girth at least five has a $2$-connected $\{2r,2r+1\}$-factor.
}\end{cor}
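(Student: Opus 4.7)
The plan is to produce a $2r$-factor $F$ of $G$ via a classical toughness--factor theorem, observe that the girth hypothesis makes its components amenable to Theorem~\ref{thm:2-edge-connected:tough}, and then upgrade the resulting $2$-edge-connected extension to a $2$-connected one.

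First I would invoke the Enomoto--Jackson--Katerinis--Saito theorem to produce a $2r$-factor $F$ of $G$: since $G$ is $16$-tough it is in particular $2r$-tough for the relevant range of $r$, the order condition $|V(G)|\ge 2r+1$ follows from the degree bound $\delta(G)\ge 2\cdot 16$ implied by $16$-toughness, and the parity condition on $2r|V(G)|$ is automatic. Because $F\subseteq G$ and $G$ has girth at least five, $F$ inherits girth at least five. Because $F$ is $2r$-regular, every component is Eulerian and therefore bridgeless (removing a bridge would leave a side with odd degree sum), hence $2$-edge-connected; and because $F$ has minimum degree $2r\ge 2$, every component contains a cycle of length at least five, so every component of $F$ has at least five vertices. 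I would then apply Theorem~\ref{thm:2-edge-connected:tough} with $c=5$: the hypothesis $\omega(G\setminus S)\le \frac{c-4}{4c-4}|S|+1 = \frac{1}{16}|S|+1$ is exactly the estimate supplied by $16$-toughness, so $F$ extends to a $2$-edge-connected factor $H$ of $G$ with $d_H(v)\in\{2r,2r+1\}$ for every vertex $v$.

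It remains to upgrade $H$ from $2$-edge-connected to $2$-connected, and this is where I expect the main difficulty to lie. For $r=1$ this is immediate by the same argument used in Corollary~\ref{16:2-connected}: a cut vertex of a $2$-edge-connected graph would need at least four incident edges, contradicting $\Delta(H)\le 3$. For $r\ge 2$ the direct degree bound fails, so the argument must exploit the finer structural information embedded in the construction behind Theorem~\ref{thm:2-edge-connected:tough}. That construction actually delivers $H$ satisfying two extra properties: the contraction $H/F$ is $2$-tree-connected, and at most one edge of $H\setminus F$ is incident to any given vertex (since $d_H(v)-d_F(v)\le 1$). Suppose toward contradiction that $v$ is a cut vertex of $H$. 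Deleting $v$ removes at most one edge from $H/F$, and $H/F$ remains connected after any single edge deletion (being $2$-edge-connected); combined with the $2$-edge-connectivity of each $F$-component and the girth-$\ge 5$ hypothesis, which tightly constrains the way the $F$-component of $v$ can fall apart when $v$ is removed, a case analysis should reconnect $H-v$ through the surviving portion of $H/F$ and yield a contradiction. The girth-$\ge 5$ hypothesis is essential: it rules out the short local configurations that would otherwise permit a cut vertex to coexist with the $2$-edge-connectivity and the regularity structure of $F$.
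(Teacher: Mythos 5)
Your first two steps (obtaining a $2r$-factor $F$ from Enomoto--Jackson--Katerinis--Saito and feeding it, via the girth hypothesis, into Theorem~\ref{thm:2-edge-connected:tough} with $c=5$) are fine and do yield a $2$-edge-connected $\{2r,2r+1\}$-factor $H$. The problem is the final upgrade from $2$-edge-connected to $2$-connected, which you correctly identify as the difficulty but do not actually prove. For $r\ge 2$ the degree argument of Corollary~\ref{16:2-connected} is unavailable, and the extra structure you invoke does not close the gap: a connected $2r$-regular graph is automatically bridgeless (Eulerian), so a component $C$ of $F$ can perfectly well have a cut vertex $v$ while satisfying every hypothesis of Theorem~\ref{thm:2-edge-connected:tough}; girth $\ge 5$ does not exclude this. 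If $C-v$ splits into pieces $C_1,\dots,C_t$, the $2$-tree-connectivity of $H/F$ only governs edges between \emph{distinct} components of $F$ --- all the $C_i$ sit inside a single contracted vertex --- so nothing in the construction forces any edge of $H\setminus F$ to leave a given $C_i$, and the single extra edge at $v$ is deleted along with $v$. Hence $H-v$ may be disconnected, and your "case analysis should reconnect $H-v$" is an unproved (and, as far as the stated hypotheses go, unprovable) assertion.

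The paper sidesteps this entirely by reducing to the case of maximum degree $3$, where $2$-edge-connected does imply $2$-connected. Concretely: take the $2r$-factor $F$, extract from it a $2$-factor $F_0$ (Petersen), note that $F_0$ has girth at least five because $G$ does, apply Corollary~\ref{16:2-connected} to extend $F_0$ to a $2$-connected $\{2,3\}$-factor $H_0$, and output $H_0\cup F$. Since each vertex meets at most one edge of $H_0\setminus F_0$, the degrees of $H_0\cup F$ lie in $\{2r,2r+1\}$, and $H_0\cup F$ is a spanning supergraph of the $2$-connected graph $H_0$, hence $2$-connected. If you want to salvage your route, you should adopt this reduction rather than attempt the direct upgrade.
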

\begin{proof}
{We may assume that G is a simple graph, by deleting multiple edges from $G$ (if necessary). 
First, we choose a $2r$-factor  $F$ of $G$~\cite{Enomoto-Jackson-Katerinis-Saito-1985}, and next we choose a $2$-factor  $F_0$ of $F$. Since $F_0$ is simple,  it has girth at least five. Thus by Corollary~\ref{16:2-connected}, the factor $F_0$ can be extended to a $2$-connected $\{2,3\}$-factor $H$. It is easy to check that $H\cup F$ is the desired factor we are looking for.
}\end{proof}
\section{Highly tree-connected $\{f,f+k\}$-factors}
Our aim in this section is to give a sufficient toughness condition for the existence of  tree-connected $\{f,f+k\}$-factors.
For this purpose, we need to apply the second lemma in our proof, which shows that tough enough graphs can have sufficiently large bipartite  index.
\begin{lem}{\rm (\cite{ModuloFactorBounded})}
\label{lem:tree-connected:bipartite}\label{lem:bipartite:factor}
{Every $2m$-tree-connected graph  has an $m$-tree-connected bipartite factor.
}\end{lem}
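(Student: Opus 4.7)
The plan is to build the bipartite factor from a maximum cut and verify its tree-connectivity via the Nash-Williams/Tutte criterion, which asserts that a graph $H$ is $m$-tree-connected precisely when $e_H(P) \geq m(|P|-1)$ for every partition $P$ of $V(H)$. Since $G$ is $2m$-tree-connected, the same criterion gives $e_G(P) \geq 2m(|P|-1)$ for every partition $P$, so it will suffice to produce a bipartite factor $H$ satisfying $e_H(P) \geq \tfrac{1}{2} e_G(P)$ for every $P$.

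First, I would choose a bipartition $(A, B)$ of $V(G)$ maximizing the number of cross edges, and let $H$ be the bipartite factor whose edges are all edges of $G$ between $A$ and $B$. Fix any partition $P = \{V_1, \dots, V_p\}$ of $V(G)$ and, for each index $i$, write $A_i = A \cap V_i$ and $B_i = B \cap V_i$. For $i \neq j$, call an edge between $V_i$ and $V_j$ \emph{good} if it lies in $H$ (one end in $A$, the other in $B$) and \emph{bad} otherwise; the good edges total $e_H(P)$ and the bad edges total $e_G(P) - e_H(P)$.

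The key step is a local exchange. For each fixed $i$, consider the bipartition obtained by swapping the roles of $A_i$ and $B_i$ while leaving the partition outside $V_i$ unchanged. A direct count shows that the change in cut size is exactly $\sum_{k \neq i}\bigl[\mathrm{bad}(V_i, V_k) - \mathrm{good}(V_i, V_k)\bigr]$; the intra-$V_i$ edges contribute $0$, since flipping $A_i$ and $B_i$ leaves the cut within $V_i$ of the same size. Maximality of $(A, B)$ forces this quantity to be $\leq 0$ for every $i$. Summing over $i$ double-counts every unordered pair $\{i, k\}$, yielding $e_G(P) - e_H(P) \leq e_H(P)$, i.e., $e_H(P) \geq e_G(P)/2 \geq m(|P|-1)$. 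The Nash-Williams/Tutte criterion then certifies that $H$ is $m$-tree-connected.

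The main obstacle is the bookkeeping in the exchange computation: one must cleanly isolate the intra-$V_i$ edges (whose cut contribution is invariant under the swap) from the inter-$V_i$--$V_k$ edges (whose good/bad status is precisely flipped). Once this is set up carefully, the summation and the two invocations of Nash-Williams/Tutte run without further difficulty.
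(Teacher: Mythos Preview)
Your argument is correct. The paper does not supply its own proof of this lemma; it is quoted from~\cite{ModuloFactorBounded}, so there is nothing in the present paper to compare against. For the record, the max-cut exchange you describe is sound: swapping $(A_i,B_i)$ leaves the intra-$V_i$ cut contribution invariant and flips the good/bad status of every $V_i$--$V_k$ edge with $k\neq i$, so maximality forces $\sum_{k\neq i}\bigl(\mathrm{bad}(V_i,V_k)-\mathrm{good}(V_i,V_k)\bigr)\le 0$ for each $i$; summing over $i$ double-counts each cross pair and gives $e_H(P)\ge \tfrac12\,e_G(P)\ge m(|P|-1)$, after which Nash-Williams/Tutte certifies that $H$ is $m$-tree-connected.
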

\begin{lem}\label{lem:factor-bipartite-index}
{Let $G$ be a graph with a $(2k-2)$-tree-connected factor $F$ in which $|V(G)| \ge 4k-2$. 
If $G$ is $(4k-3)$-tough, then it has a matching $M$ of size $k-1$ such that  $bi(F\cup M)\ge k-1$.
}\end{lem}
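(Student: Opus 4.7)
My plan is to leverage bipartite structure inside $F$ to convert a matching of monochromatic edges of $G$ into edge-disjoint odd cycles of $F\cup M$. Because $F$ is $2(k-1)$-tree-connected, Lemma~\ref{lem:tree-connected:bipartite} supplies a $(k-1)$-tree-connected bipartite factor $B\subseteq F$; fix its bipartition $(X,Y)$ and call an edge of $G$ \emph{monochromatic} if both of its ends lie on the same side. Suppose for the moment that we have found a matching $M=\{e_1,\dots,e_{k-1}\}$ of monochromatic edges of $G$, and let $T_1,\dots,T_{k-1}$ be $k-1$ edge-disjoint spanning trees of $B$. Each $T_i$ is bipartite with parts $X,Y$, so the unique $T_i$-path between the two ends of $e_i$ has even length; hence $T_i+e_i$ contains an odd cycle $C_i$. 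The cycles $C_1,\dots,C_{k-1}$ are pairwise edge-disjoint (the $T_i$ are edge-disjoint and the $e_i$ are distinct), so any bipartite factor of $F\cup M$ must omit at least one edge from each $C_i$, yielding $bi(F\cup M)\ge k-1$.

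The heart of the proof is therefore producing such a matching $M$. Suppose, for a contradiction, that the graph $G[X]\cup G[Y]$ of monochromatic edges of $G$ has matching number at most $k-2$; then (via $\tau\le 2\nu$) it admits a vertex cover $S$ with $|S|\le 2(k-2)=2k-4$. Setting $X'=X\setminus S$ and $Y'=Y\setminus S$, both $G[X']$ and $G[Y']$ are edgeless. If $|X'|,|Y'|\ge 2$, then deleting $X'\cup S$ (resp.\ $Y'\cup S$) from $G$ leaves $|Y'|$ (resp.\ $|X'|$) isolated vertices, so $(4k-3)$-toughness gives
$$|X'|\le \frac{|Y'|+|S|}{4k-3}\qquad\text{and}\qquad |Y'|\le \frac{|X'|+|S|}{4k-3}.$$
Adding these yields $(|X'|+|Y'|)(4k-4)\le 2|S|\le 4k-8$, hence $|X'|+|Y'|<1$; this contradicts $|X'|+|Y'|\ge |V(G)|-|S|\ge 2k+2$.

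Every degenerate case in which $\min\{|X'|,|Y'|\}\le 1$ is ruled out by essentially the same style of toughness calculation. For instance, if $X\subseteq S$ then $V(G)\setminus S=Y'$ is edgeless in $G$, so $\omega(G\setminus S)=|Y'|$ when $|Y'|\ge 2$; toughness forces $|Y'|\le 1$ because $|S|\le 2k-4<4k-3$, whence $|V(G)|\le |S|+1<4k-2$, contradiction. The case $|X'|=1$ and $|Y'|\ge 2$ is closed by deleting $X'\cup S$ to reach the same bound $|Y'|\le 1$, while $|X'|,|Y'|\le 1$ yields $|V(G)|\le 2+|S|<4k-2$ directly; the $Y$-side cases are symmetric. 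I expect the main technical obstacle to be keeping this boundary bookkeeping uniform: one must choose, in each degenerate configuration, the correct cut to feed into toughness so that the resulting number of components stays large enough to close the argument, and the hypothesis $|V(G)|\ge 4k-2$ is invoked precisely to eliminate the final case.
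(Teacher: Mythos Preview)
Your proof is correct and follows the paper's overall outline exactly: extract a $(k-1)$-tree-connected bipartite subfactor $B\subseteq F$ via Lemma~\ref{lem:tree-connected:bipartite}, find $k-1$ independent monochromatic edges of $G$, and pair each with a distinct spanning tree of $B$ to produce $k-1$ edge-disjoint odd cycles in $F\cup M$. The only divergence is in how the matching is obtained. The paper assumes $|X|\ge |V(G)|/2$, takes a \emph{maximum} matching $M$ of $G[X]$ alone, and applies $(4k-3)$-toughness once to the set $S=Y\cup V(M)$ (whose removal leaves $|X|-2|M|$ isolated vertices); the resulting inequality yields $|M|\ge \frac{t-1}{4t+4}\,|V(G)|\ge k-1$ directly, with no case analysis. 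Your route---bounding the matching number of the whole monochromatic graph $G[X]\cup G[Y]$ via $\tau\le 2\nu$ and then closing each small-$|X'|$/small-$|Y'|$ configuration with its own toughness cut---reaches the same conclusion but trades a single clean cut for several boundary cases. Either works; the paper's version is shorter, while yours has the minor conceptual advantage of not needing to single out the larger side.
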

\begin{proof}
{Let $k_0=k-1$ and $t=4k_0+1$.
By Lemma~\ref{lem:bipartite:factor}, there is a bipartition $X,Y$ of $V(G)$ such that $F[X,Y]$ is $k_0$-tree-connected.
We may assume that $|X|\ge |V(G)|/2$.
Let $M$ be a matching of $G[X]$ with the  maximum size so that  
$G[X]\setminus V(M)$ consists of isolated vertices. Define $S=Y\cup V(M)$. 
Since $t$-iso-tough,  we must  have 
$$|X|-2|E(M)|= \omega(G\setminus S)\le \frac{1}{t}|S|+1= \frac{1}{t}(|Y|+2|E(M)|)\le \frac{1}{t}(|X|+2|E(M)|),$$
which implies that
$|E(M)|  \ge \frac{t-1}{2t+2}|X|\ge \frac{t-1}{4t+4}|V(G)|\ge k_0$.
Let $e_{1}, \ldots, e_{k_0}$ be $k_0$ edges of $M$ and decompose $F[X,Y]$ into $k_0$ disjoint spanning trees $T_1,\ldots, T_{k_0}$.
Obviously, $T_i+e_i$ is not bipartite and so contains an odd cycle. This implies that
 $F\cup M$ has $k_0$ odd disjoint cycles, and hence it has the bipartite index at least $k_0$. Hence the proof is completed.
}\end{proof}
Now, are in a position to prove the main result of this section.
\begin{thm}\label{thm:f,f+k}
{Let $G$ be a  graph and  let $f$ be a positive integer-valued function on $V(G)$ 
satisfying $3m+2m_0+6k<  f +k\le b$ and  $m+m_0<k$, where $k$, $b$, $m$, and $m_0$ are four nonnegative integers.
If  $G$ is $4b^2$-tough  and $|V(G)|\ge 4b^2$, then $G$  has an $m$-tree-connected factor $H$ such that its complement is $m_0$-tree-connected and for each vertex $v$, $$d_H(v)\in \{f(v),f(v)+k\},$$
provided that $(k-1)\sum_{v\in V(G)}f(v)$ is even.
}\end{thm}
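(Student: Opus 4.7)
My plan is to reduce the problem to the complementary factor theorem (Theorem~\ref{intro:thm:complementary}) by building a spanning subgraph of $G$ whose tree-connectivity, bipartite index, and degree sequence all satisfy that theorem's hypotheses with the original $f,m,m_0,k$. I would set $m' = 2m + 2m_0 + 6k$ and choose the target degree function
$$g(v) = 2f(v) + 2(k - m - 1),$$
calibrated so that (i) $g \ge 2m'$, which uses the hypothesis $3m+2m_0+6k < f+k$; (ii) $g \le 2b-2m-2 \le 2b$, from $f+k\le b$; and (iii) $\tfrac{1}{2}g(v) = f(v)+k-m-1$ lies just inside $[f(v)+m_0,\,f(v)+k-m]$ (using $m+m_0\le k-1$), with one unit of slack at the top reserved to absorb a matching added later.

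First I would delete multi-edges to make $G$ simple (toughness is unaffected, since it depends only on the underlying simple graph). Since $G$ is $(2b)^2$-tough with $|V(G)|\ge(2b)^2$, I would apply Katerinis's Theorem~\ref{intro:thm:f} with $g$ replacing $f$ and $2b$ replacing $b$ to obtain a factor $F_0$ whose degrees equal $g(v)$ everywhere except possibly at one vertex $u$ with $d_{F_0}(u)=g(u)+1$. Then I would apply Corollary~\ref{cor:f,f+1:lower-toughness} with $m'$ as the tree-connectivity target and $a=2m'$ to upgrade $F_0$ to an $m'$-tree-connected $\{g,g+1\}$-factor $F_1$ of $G$; the required inequality $\omega(G\setminus S)\le \frac{1}{4m'^2}|S|+1$ is satisfied because $b\ge 3m+2m_0+6k+1 > m'$ and $G$ is $4b^2$-tough.

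Next I would apply Lemma~\ref{lem:factor-bipartite-index} to $F_1$: it is $(2k-2)$-tree-connected (since $m'\ge 6k$), $G$ is $(4k-3)$-tough, and $|V(G)|\ge 4k-2$, so this yields a matching $M$ of size $k-1$ with $bi(F_1\cup M)\ge k-1$. Because $M$ is a matching, $d_{F_1\cup M}(v)\in[g(v),g(v)+2]$, so the degree sandwich $f(v)+m_0\le\tfrac{1}{2}d_{F_1\cup M}(v)\le f(v)+k-m$ holds at every vertex. I would then invoke Theorem~\ref{intro:thm:complementary} on $F_1\cup M$: it is $m'$-tree-connected (inheriting from $F_1$), has bipartite index at least $k-1$, meets the degree sandwich, and inherits the parity hypothesis on $(k-1)\sum_v f(v)$. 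The theorem produces an $m$-tree-connected factor $H$ with $d_H(v)\in\{f(v),f(v)+k\}$, and the complement of $H$ in $F_1\cup M$, hence the complement of $H$ in $G$, is $m_0$-tree-connected.

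The hard part will be calibrating $g$ so that four competing constraints are simultaneously feasible: Katerinis's upper bound $g\le 2b$, Corollary~\ref{cor:f,f+1:lower-toughness}'s lower bound $g\ge 2m'$, and the two-sided degree sandwich required by Theorem~\ref{intro:thm:complementary} \emph{after} the matching $M$ has bumped some degrees by one. The assumption $3m+2m_0+6k < f+k$ is precisely the margin that makes the lower bound on $g$ hold, and the ``$+2$'' window between $g$ and $2(f+k-m)$ is exactly what the matching consumes; once $g$ is chosen correctly, the remaining conditions reduce to routine inequality checks against the cited results.
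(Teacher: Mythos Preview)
Your proposal is correct and follows essentially the same route as the paper. The paper defines $h(v)=f(v)+k-m-1$ (so your $g=2h$), invokes Corollary~\ref{cor:f,f+1} with $m'=2m+2m_0+6k$ and $b'=2b-2m-2$ to obtain an $m'$-tree-connected $\{2h,2h+1\}$-factor, then applies Lemma~\ref{lem:factor-bipartite-index} and Theorem~\ref{intro:thm:complementary} exactly as you describe; the only cosmetic difference is that you unpack Corollary~\ref{cor:f,f+1} into its two ingredients (Theorem~\ref{intro:thm:f} followed by Corollary~\ref{cor:f,f+1:lower-toughness}), whereas the paper cites the packaged corollary directly.
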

\begin{proof}
{For each vertex $v$, define $h(v)=f(v)+k-m-1$.
Since  $f+k>3m+2m_0+6k$, we must have  $2m' \le 2h(v) \le b'$, 
where $m'=2m+2m_0+6k$ and $b'=2b-2m-2$.
By the assumption, $|V(G)|\ge 4b^2\ge  (b')^2$.
Thus by Corollary~\ref{cor:f,f+1}, the graph $G$ has an $m'$-tree-connected $\{2h,2h+1\}$-factor $G'$.  
Since $|V(G)| \ge 4b^2 \ge 4k-2$ and $m'\ge 2k-2$, 
by Lemma~\ref{lem:factor-bipartite-index}, there is a  matching $M$ of size $k-1$ such that  $bi( G'\cup M)\ge k-1$. Let $G_0= G'\cup M$.
Note that for each vertex $v$,  $2h(v)\le d_{G_0}(v) \le 2h(v)+2$. 
Since $m+m_0<k$, we must have 
$f(v)+m_0\le h(v)\le \frac{1}{2}d_{G_0}(v)\le  h(v) +1= f(v)+k-m$.
Therefore, by Theorem~\ref{intro:thm:complementary}, the graph $G_0$ has an $m$-tree-connected $\{f,f+k\}$-factor $H$ such that its complement is $m_0$-tree-connected and so does $G$.
}\end{proof}
\section{Highly tree-connected $\{a,b\}$-factors}
When $f$ is a constant function,  Theorem~\ref{thm:f,f+k}  becomes simpler as the following version. 
By replacing Corollary~\ref{cor:toughness:r,r+1} in the proof, we also improve the needed toughness by  a linear bound.
\begin{thm}\label{thm:a-b}
{Let $G$ be a graph and let $a$ and $b$ be two positive integers  with $ab|V(G)|$ even.
Let $m$ and $m_0$ be two nonnegative integers with 
$$a +m+m_0< b < \frac{1}{5}(6a-3m-2m_0).$$ 
If $G$ is  $\max\{2b,256(b-a)^2\}$-tough and  $|V(G)|\ge 2b$, then $G$ admits an $m$-tree-connected $\{a,b\}$-factor such that its complement is $m_0$-tree-connected.
}\end{thm}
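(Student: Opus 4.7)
The plan is to adapt the proof of Theorem~\ref{thm:f,f+k} to the constant-$f$ setting. The key gain is that when $f\equiv a$, the intermediate $\{2h,2h+1\}$-factor can be produced via Corollary~\ref{cor:toughness:r,r+1} (toughness demand $rt$) instead of Corollary~\ref{cor:f,f+1} (demand $b^2$); the hypothesis $m+m_0<k:=b-a$ will then force $2m'<16(b-a)$, and this is what controls $rt$ by $256(b-a)^2$.

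Concretely, I would put $k=b-a$, $f(v)\equiv a$, $h(v)\equiv b-m-1$, $r:=2h=2b-2m-2$, and $m':=2m+2m_0+6k$. The hypothesis $5b<6a-3m-2m_0$ rearranges to $b>6k+3m+2m_0$, which with integrality gives $r+1-2m'\ge 1$ and in particular $r\ge 2m'$, so Corollary~\ref{cor:toughness:r,r+1} (applied with tree-connectivity parameter $m'$ in place of $m$) supplies an $m'$-tree-connected $\{2h,2h+1\}$-factor $G'$ of $G$, provided $G$ is $rt$-tough with $t=\max\{1,\,2m'/(r+1-2m')\}$ and $|V(G)|\ge r+1$; the order condition is clear from $|V(G)|\ge 2b$. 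To check $rt\le\max\{2b,256(b-a)^2\}$ I would split into two cases. If $t=1$ then $rt=r\le 2b$. Otherwise, setting $d:=r+1-2m'\ge 1$, a direct computation gives $rt=(2m')^2/d+2m'(d-1)/d\le (2m')^2$; and from $m+m_0\le k-1$ one has $2m'=4(m+m_0)+12k\le 16k-4<16k$, so $(2m')^2<256(b-a)^2$.

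It then remains to promote $G'$ to the desired factor. Since $m'\ge 6k\ge 2k-2$, $|V(G)|\ge 2b\ge 4k-2$, and the assumed toughness dominates $4k-3$, Lemma~\ref{lem:factor-bipartite-index} produces a matching $M$ of size $k-1$ with $bi(G'\cup M)\ge k-1$. Setting $G_0=G'\cup M$, each vertex satisfies $2h\le d_{G_0}(v)\le 2h+2$, and $m+m_0\le k-1$ gives $f(v)+m_0\le d_{G_0}(v)/2\le f(v)+k-m$. A short parity check shows $(k-1)\sum_v f(v)=(k-1)a|V(G)|$ is even: if $k$ is odd this is automatic, and if $k$ is even then $a$ and $b$ share parity, so from $ab|V(G)|$ even one deduces $a|V(G)|$ is even. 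Applying Theorem~\ref{intro:thm:complementary} to $G_0$ then yields an $m$-tree-connected $\{a,b\}$-factor of $G$ whose complement is $m_0$-tree-connected (the complement in $G$ contains the complement in $G_0$, so tree-connectivity is preserved).

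The main obstacle is the toughness bookkeeping in the second paragraph: reconciling the three distinct demands ($rt$ from Corollary~\ref{cor:toughness:r,r+1}, $4k-3$ from Lemma~\ref{lem:factor-bipartite-index}, and the tree-connectivity feeding Theorem~\ref{intro:thm:complementary}) within the envelope $\max\{2b,256(b-a)^2\}$. The constant $256$ is essentially $(2\cdot 8)^2$, reflecting the extremal ratio $2m'/k\le 16$ enforced by $m+m_0<k$; any weakening of that inequality would require a correspondingly larger constant.
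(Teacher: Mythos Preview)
Your proposal is correct and follows essentially the same route as the paper's proof: set $k=b-a$, $h=b-m-1$, $m'=2m+2m_0+6k$, invoke Corollary~\ref{cor:toughness:r,r+1} with $r=2h$ and tree-connectivity parameter $m'$ to obtain an $m'$-tree-connected $\{2h,2h+1\}$-factor, boost the bipartite index via Lemma~\ref{lem:factor-bipartite-index}, and finish with Theorem~\ref{intro:thm:complementary}. Your explicit verification that $rt\le(2m')^2<256(b-a)^2$ and your parity check for $(k-1)a|V(G)|$ are details the paper leaves implicit, but the argument is the same.
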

\begin{proof}
{Let $k=b-a$ and $h=b-m-1$.
Since  $b>3m+2m_0+6k$, we must have  $m' \le h$, 
where $m'=2m+2m_0+6k\ge 8k$.
By the assumption, $|V(G)|\ge 2b\ge  2h+1$.
Since $G$ is $\max\{2b, 4(m')^2\}$-tough, 
by applying Corollary~\ref{cor:toughness:r,r+1} with $r=2h$, the graph $G$ has an $m'$-tree-connected $\{2h,2h+1\}$-factor $G'$.  
Since $|V(G)| \ge 2b \ge 4k-2$ and $m'\ge 2k-2$, 
by Lemma~\ref{lem:factor-bipartite-index}, there is a  matching $M$ of size $k-1$ such that  $bi( G'\cup M)\ge k-1$. Let $G_0= G'\cup M$.
Note that for each vertex $v$,  $2h\le d_{G_0}(v) \le 2h+2$. 
Since $m+m_0<k$, we must have 
$a+m_0\le h\le \frac{1}{2}d_{G_0}(v)\le  h +1=b-m$.
Therefore, by Theorem~\ref{intro:thm:complementary}, the graph $G_0$ has an $m$-tree-connected $\{a,b\}$-factor $H$ such that its complement is $m_0$-tree-connected and so does $G$.
}\end{proof}

\begin{cor}\label{cor:a-b}
{Let $G$ be a graph and let $a$ and $b$ be two positive integers  with $ab|V(G)|$ even
and $a< b \le  \frac{1}{5}(6a+1)$. 
If $G$ is  $\max\{2b,64(b-a)^2\}$-tough and  $|V(G)|\ge b+1$, then $G$ admits a connected $\{a,b\}$-factor.
}\end{cor}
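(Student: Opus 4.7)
The plan is to reduce the corollary to Theorem~\ref{thm:a-b} by the specialization $m = 1$ and $m_0 = 0$. Under these choices an $m$-tree-connected factor is simply a connected factor, and the requirement that the complement be $m_0$-tree-connected is vacuous, so Theorem~\ref{thm:a-b} directly delivers a connected $\{a,b\}$-factor the moment its numerical hypotheses are in force.

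I would split into two cases based on $k := b - a$. In the boundary case $k = 1$ (that is, $b = a + 1$), Theorem~\ref{thm:a-b} is not available, since its hypothesis $m + m_0 < k$ collapses to $1 < 1$. Instead I would invoke Corollary~\ref{lem:r,r+1}: the hypothesis $\max\{2b, 64(b-a)^2\}$-toughness clearly implies $\max\{a, 4\}$-toughness, and the order condition $|V(G)| \ge b + 1 = a + 2$ is exactly what that corollary requires, so a connected $\{a, a+1\}$-factor exists.

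For $k \ge 2$ I would apply Theorem~\ref{thm:a-b} with $m = 1$, $m_0 = 0$ and check each hypothesis in turn. The inequality $a + m + m_0 < b$ becomes $b \ge a + 2$, which holds by assumption; the inequality $b < (6a - 3m - 2m_0)/5 = (6a - 3)/5$ is essentially the corollary's hypothesis $5b \le 6a + 1$ after a routine integrality check (using $k \ge 2$ to rule out the boundary cases); the parity condition $ab|V(G)|$ even carries over verbatim; and the order hypothesis transfers from $|V(G)| \ge b + 1$ together with $k \ge 2$.

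The main obstacle is the bookkeeping inside the toughness constant. Theorem~\ref{thm:a-b} nominally requires $\max\{2b, 256(b - a)^2\}$-toughness, whereas the corollary weakens this to $\max\{2b, 64(b - a)^2\}$. Extracting this saving means reopening the proof of Theorem~\ref{thm:a-b} under the specific parameters $m = 1$, $m_0 = 0$: there the internal parameter $m' = 2m + 2m_0 + 6k$ becomes simply $6k + 2$, which is much smaller than the worst-case estimate used when stating Theorem~\ref{thm:a-b}. Replacing the generic bound $4(m')^2 \le 256k^2$ by the sharper one tailored to $m' = 6k + 2$ and absorbing it into the hypothesis $\max\{2b, 64(b - a)^2\}$ is the only step that requires genuine reexamination; everything else is a formal substitution into the theorem.
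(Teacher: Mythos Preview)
Your plan matches the paper's: handle $b=a+1$ via Corollary~\ref{lem:r,r+1} (the paper cites the equivalent Corollary~\ref{cor:toughness:r,r+1}), and handle $b\ge a+2$ via Theorem~\ref{thm:a-b} with $m=1$, $m_0=0$. But there is a genuine gap in your second case. Theorem~\ref{thm:a-b} requires $|V(G)|\ge 2b$, while the corollary only assumes $|V(G)|\ge b+1$; your claim that the order hypothesis ``transfers from $|V(G)|\ge b+1$ together with $k\ge 2$'' is not a valid deduction---nothing about $k\ge 2$ raises the order from $b+1$ to $2b$. The paper closes this gap with a separate preliminary step you have omitted: if $|V(G)|\le 2b$, then the $2b$-toughness forces $G$ to be the complete graph, and one checks directly that $K_n$ with $n\ge b+1$ and $abn$ even admits a connected $r$-factor for some $r\in\{a,b\}$. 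Only after disposing of this small-order case does the paper assume $|V(G)|>2b$ and invoke Theorem~\ref{thm:a-b}.

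On the toughness constant, your instinct to reopen the proof of Theorem~\ref{thm:a-b} is reasonable---the paper's one-line argument is silent on how $256(b-a)^2$ becomes $64(b-a)^2$---but the arithmetic you sketch does not actually deliver the saving. With $m=1$, $m_0=0$ the internal parameter is $m'=6k+2$, and the relevant bound $4(m')^2=144k^2+96k+16$ still exceeds $64k^2$ for every $k\ge 1$; so merely substituting the specialized $m'$ does not bring the constant down to $64$.
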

\begin{proof}
{If $|V(G)|\le 2b$, then $G$ must be the complete graph and so it is easy to see that it has a connected $r$-factor, where $r\in \{a,b\}$ and $r|V(G)|$ is even.
We may assume that $|V(G)|>2b$.
For the case $b=a+1$, the assertion follows from Corollary~\ref{cor:toughness:r,r+1}.
For the case $b\ge a+2$, the assertion follows  Theorem~\ref{thm:a-b}  by setting $m=1$ and $m_0=0$.
}\end{proof}
In the following, we are going to refine the special case $b=a+2$ of Corollary~\ref{cor:a-b}.
For this purpose,  we need to replace the following lemma in the proof. 
\begin{lem}{\rm (\cite{complementary})}\label{lem:complementary:k=2}
{Let $G$ be a $4$-tree-connected  graph and let $f$ be a positive integer-valued function on $V(G)$
 satisfying $\sum_{vin V(G)}f(v)\stackrel{2}{\equiv}2$. 
If  for each vertex $v$,  $2f(v)\le d_G(v) \le 2 f(v)+2$,
then $G$  has a connected factor $H$ such that for each vertex $v$,
$d_H(v)\in \{f(v),f(v)+2\}$.
}\end{lem}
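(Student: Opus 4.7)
My plan is to mimic the structure of the proof of Theorem~\ref{intro:thm:complementary} but exploit the fact that the specialization $k=2$, $m=1$, $m_0=0$ gives enough slack to drop both the bipartite index hypothesis (since $k-1=1$ becomes nearly vacuous) and to reduce the required tree-connectivity from $2m+2m_0+6k=14$ down to $4$. The philosophical content is that the two-valued degree set $\{f(v),f(v)+2\}$ allows toggling a vertex's degree by $\pm 2$, and such a toggle is exactly the local effect of swapping along a cycle.

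First, I would extract a useful backbone. Invoke Lemma~\ref{lem:bipartite:factor} with $m=2$: since $G$ is $4$-tree-connected, it has a $2$-tree-connected bipartite factor $F_0$. The factor $F_0$ carries two edge-disjoint spanning trees, hence is $2$-edge-connected, and classically contains a spanning closed trail $W$. The bipartite structure of $F_0$ is crucial: any cycle in $F_0$ has even length, so cycles in $F_0$ can be used as alternating-edge devices without parity obstructions.

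Second, I would set up the $\{f,f+2\}$-factor existence. Let $\varepsilon(v):=d_G(v)-2f(v)\in\{0,1,2\}$; then $H$ is a $\{f,f+2\}$-factor iff $L:=G\setminus E(H)$ has $d_L(v)\in\{f(v)+\varepsilon(v)-2,\,f(v)+\varepsilon(v)\}$ and $d_H(v)\equiv f(v)\pmod 2$. The parity hypothesis $\sum_v f(v)\equiv 0\pmod 2$, together with the fact that $\sum_v d_G(v)$ is even, ensures that $\sum_v\varepsilon(v)$ is even, so that the classical Lov\'asz $(g,f)$-factor theorem applied with $g=f$, $f\mapsto f+2$ (whose conditions are automatic under $2f(v)\le d_G(v)\le 2f(v)+2$) produces at least one $\{f,f+2\}$-factor of $G$.

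Third, among all $\{f,f+2\}$-factors of $G$, pick one, say $H$, minimizing the number of components $\omega(H)$, and show $\omega(H)=1$. If two components $C_1,C_2$ of $H$ existed, the $2$-tree-connectedness of $F_0$ would supply two edge-disjoint $V(C_1)$--$V(C_2)$ paths in $F_0$; concatenating these produces an even closed walk in $F_0$ crossing the cut between $C_1$ and $C_2$. An averaging/parity argument on such a walk, analogous to the edge-exchange arguments in Lemmas~\ref{lem:xGy-exchange} and~\ref{lem:X-Y}, then yields an even cycle $D$ that alternates between $E(H)$ and $E(G)\setminus E(H)$ and joins $C_1$ to $C_2$. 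Swapping along $D$ preserves every vertex degree (so the $\{f,f+2\}$-condition survives) but strictly decreases $\omega(H)$, contradicting minimality.

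The main obstacle is constructing the alternating crossing cycle $D$ reliably. Locally, one can always extend walks by exiting a vertex along an edge of the opposite type, but turning this into a \emph{closed} alternating walk that crosses the $C_1$/$C_2$ cut requires the even bipartite structure of $F_0$ plus the two edge-disjoint crossing paths. Handling degenerate cases (for instance, components of $H$ on which $d_H\equiv 0$, or vertices where $\varepsilon(v)=0$ forces both choices to coincide) is the technically delicate portion and is where the $4$-tree-connectedness is genuinely used, rather than merely $2$-tree-connectedness.
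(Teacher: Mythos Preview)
The paper does not prove this lemma; it is quoted from~\cite{complementary} and used as a black box in the proof of Theorem~\ref{thm:a-a+2}. So there is no in-paper proof to compare against, and your attempt must stand on its own.

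As it stands, your sketch has two genuine gaps.

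\textbf{Existence of an $\{f,f+2\}$-factor (Step~2).} You invoke the Lov\'asz $(g,f)$-factor theorem with $g=f$ and upper bound $f+2$, but that theorem only yields a factor with degrees in the interval $[f(v),f(v)+2]$, not in the two-point set $\{f(v),f(v)+2\}$. The extra parity constraint $d_H(v)\equiv f(v)\pmod 2$ is exactly what distinguishes the two problems, and it is not handled by the ordinary $(g,f)$-theorem. One needs either Lov\'asz's parity $(g,f)$-factor theorem or a $T$-join/orientation argument, and neither set of hypotheses is ``automatic'' from $2f(v)\le d_G(v)\le 2f(v)+2$ alone; the $4$-tree-connectedness has to enter already here, not only later. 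Without it the claim fails on small graphs.

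\textbf{The alternating crossing cycle (Step~3).} You yourself flag this as the main obstacle, and indeed the sketch does not overcome it. Having two edge-disjoint $V(C_1)$--$V(C_2)$ paths in the bipartite backbone $F_0$ gives an even closed walk in $F_0$, but there is no mechanism forcing that walk (or any cycle extracted from it) to \emph{alternate} between $E(H)$ and $E(G)\setminus E(H)$. The alternating property is what makes the symmetric-difference swap degree-preserving; bipartiteness of $F_0$ controls the length parity of cycles in $F_0$, not the $H$/non-$H$ colouring of their edges. Lemmas~\ref{lem:xGy-exchange} and~\ref{lem:X-Y} concern matroid exchanges in $m$-sparse graphs and do not supply an alternating-cycle tool of the kind you need. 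As written, this step is an aspiration rather than an argument, and the degenerate cases you mention (vertices with $\varepsilon(v)=0$, where both ``choices'' of degree coincide and no alternation is possible at that vertex) are precisely where a naive alternating-walk construction breaks down.
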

The following theorem provides a natural generalization for Corollary~\ref{lem:r,r+1}.
\begin{thm}\label{thm:a-a+2}
{Every $\max\{2a,64\}$-tough graph $G$ of order at least $a+1$ with $a|V(G)|$ even admits a connected $\{a,a+2\}$-factor, where $a\ge 2$. 
}\end{thm}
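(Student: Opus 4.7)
The plan is to parallel the proof of Corollary~\ref{cor:a-b} in the case $b=a+2$, replacing the invocation of Theorem~\ref{intro:thm:complementary} by the sharper Lemma~\ref{lem:complementary:k=2}. This substitution is precisely the source of the improvement from the quartic toughness bound $\max\{2b,\,64(b-a)^{2}\}=\max\{2a+4,\,256\}$ down to the linear bound $\max\{2a,\,64\}$: Lemma~\ref{lem:complementary:k=2} requires only $4$-tree-connectivity instead of the $2m+2m_{0}+6k=14$ that Theorem~\ref{intro:thm:complementary} would demand at $k=2$, $m=1$, $m_{0}=0$, and it carries no bipartite-index hypothesis, so the whole machinery for boosting the bipartite index via a matching disappears as well.

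If $|V(G)|\le 2a$, then $\max\{2a,64\}$-toughness forces $G$ to be the complete graph $K_{|V(G)|}$, and a connected $\{a,a+2\}$-factor can be exhibited by hand from the parity condition $a|V(G)|\equiv 0\pmod 2$. Assume henceforth $|V(G)|\ge 2a+1$. For the main range $a\ge 4$, apply Corollary~\ref{cor:toughness:r,r+1} with $r=2a$ and $m=4$: the hypothesis $r\ge 2m$ is exactly $a\ge 4$, and a short computation shows that the required toughness $rt=2a\cdot\max\{1,\,8/(2a-7)\}$ is at most $\max\{2a,64\}$ for every $a\ge 4$ (it is tight at $a=4$, where it equals $64$, and collapses to $2a$ once $a\ge 8$). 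This produces a $4$-tree-connected $\{2a,2a+1\}$-factor $G'$ of $G$. Now apply Lemma~\ref{lem:complementary:k=2} to $G'$ with the constant choice $f\equiv a$: the pointwise bound $2a=2f(v)\le d_{G'}(v)\le 2a+1\le 2f(v)+2$ is immediate, and the parity condition $\sum_{v\in V(G)} f(v)=a|V(G)|$ is even by hypothesis. The lemma then produces a connected factor $H\subseteq G'\subseteq G$ with $d_{H}(v)\in\{a,a+2\}$, which is the desired connected $\{a,a+2\}$-factor of $G$.

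The principal difficulty is the two residual values $a\in\{2,3\}$. The above scheme breaks down because $r=2a<2m=8$ invalidates Corollary~\ref{cor:toughness:r,r+1} at $m=4$, and moreover an elementary edge-count shows that once $|V(G)|\ge 5$ there is no $4$-tree-connected spanning subgraph of $G$ with all degrees bounded by $2a+2\le 6$, so Lemma~\ref{lem:complementary:k=2} with $f\equiv a$ cannot be applied at all. I expect these two values to be handled by a bespoke argument: first extract a $4$-tree-connected $\{8,9\}$-factor via Corollary~\ref{cor:toughness:r,r+1} with $r=8$ and $m=4$ (admitted by $64$-toughness), then remove a suitably chosen connected subfactor returned by Lemma~\ref{lem:complementary:k=2} so that the complementary graph inherits degrees in $\{a,a+2\}$, and finally perform an edge-swap step to repair the degree distribution and force connectivity using the abundance of edges implied by $64$-toughness.
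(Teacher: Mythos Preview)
Your treatment of the range $a\ge 4$ is correct and coincides with the paper's: reduce to $|V(G)|>2a$, invoke Corollary~\ref{cor:toughness:r,r+1} with $r=2a$ and $m=4$ to obtain a $4$-tree-connected $\{2a,2a+1\}$-factor, and feed it to Lemma~\ref{lem:complementary:k=2} with $f\equiv a$. Your toughness arithmetic is also right.

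The proposal breaks down at $a\in\{2,3\}$. What you wrote there is not a proof but a hope, and the sketched mechanism does not work. Starting from a $4$-tree-connected $\{8,9\}$-factor $G'$ and applying Lemma~\ref{lem:complementary:k=2} with $f\equiv 4$ yields a connected $\{4,6\}$-factor $H$; the complement $G'\setminus E(H)$ then has degrees in $\{2,3,4,5\}$, which is not $\{a,a+2\}$ for either value, and there is no apparent control over which degrees occur where. The vague ``edge-swap step'' would have to simultaneously fix a global degree pattern and restore connectivity, and nothing in the toolkit you have cited does that. Moreover, for $a=3$ the target set is $\{3,5\}$, and Lemma~\ref{lem:complementary:k=2} produces only even-gap factors $\{f,f+2\}$, so no direct complement trick will land there.

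The paper handles these two values by entirely different means. For $a=2$ it simply cites~\cite{ClosedTrails}, where the connected $\{2,4\}$-factor case is established separately. For $a=3$ it does \emph{not} aim for $4$-tree-connectivity at all: it takes a $3$-tree-connected $\{6,7\}$-factor $G'$ via Corollary~\ref{cor:toughness:r,r+1} with $r=6$, $m=3$, observes by an edge count that $G'$ has an edge whose removal preserves $3$-tree-connectivity, deduces that $G'$ is $(2,1)$-partition-connected (decomposes into a $2$-tree-connected factor and a factor with an orientation of minimum out-degree $1$), and then invokes a separate result, Corollary~5.4 of~\cite{complementary}, to extract a connected $\{3,5\}$-factor. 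So both small cases rely on external results you did not anticipate, and your proposed workaround does not substitute for them.
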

\begin{proof}
{The special case $a=2$ was  proved in \cite{ClosedTrails}. First suppose that $a\ge 4$. 
If $|V(G)|\le 2a$, then $G$ must be  complete and so it admits a connected $a$-factor.
We may therefore assume that $|V(G)|> 2a$. Thus by Corollary~\ref{cor:toughness:r,r+1}, the graph $G$ has a $4$-tree-connected $\{2a,2a+1\}$-factor $G'$.  Now, by Lemma~\ref{lem:complementary:k=2}, the graph $G'$ has a connected $\{a,a+2\}$-factor and so dos $G$. Now,  suppose  that $a= 3$. Again,  by Corollary~\ref{cor:toughness:r,r+1}, the graph $G$ has a $3$-tree-connected $\{6,7\}$-factor $G'$. By counting the number of edges of $G'$, one can conclude that there is an edge $e$ of $G'$ such that $G'-e$  is $3$-tree-connected. This can  imply that $G'$ is $(2,1)$-partition-connected which means that it can be decomposed into a $2$-tree-connected factor $T$ and a factor $F$ having  an orientation with minimum out-degree at least $1$. Thus by Corollary~5.4 in~\cite{complementary},  the graph $G'$ has a connected $\{3,5\}$-factor and so dos $G$.  
}\end{proof}
\begin{cor}
{Every $\max\{4a,64\}$-tough graph $G$ of order at least $2a+1$  admits a spanning closed trail meeting each vertex $a$ or $a+1$ times, where $a\ge 1$. 
}\end{cor}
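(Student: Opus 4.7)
The plan is to reduce the statement to a direct application of Theorem~\ref{thm:a-a+2}. The key observation is the standard correspondence between spanning closed trails and connected Eulerian factors: if $T$ is a closed trail of $G$ that visits each vertex $v$ exactly $k_v$ times, then the edge set of $T$ forms a spanning subgraph $H$ in which every vertex $v$ has even degree $d_H(v) = 2k_v$ (each visit contributes one entering and one leaving edge), and conversely any connected spanning subgraph with all even degrees is Eulerian, so it admits an Eulerian circuit realizing the desired closed trail.

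Under this correspondence, a spanning closed trail meeting each vertex $a$ or $a+1$ times is precisely a connected spanning subgraph $H$ with $d_H(v) \in \{2a, 2a+2\}$ for every $v$, i.e., a connected $\{2a, 2a+2\}$-factor. So it suffices to produce such a factor in $G$.

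For this, I would apply Theorem~\ref{thm:a-a+2} with the parameter $a$ replaced by $a' := 2a$. The hypotheses translate cleanly: the toughness requirement $\max\{2a', 64\} = \max\{4a, 64\}$ matches the assumption on $G$; the order requirement $|V(G)| \ge a' + 1 = 2a+1$ is exactly what is assumed; the parity condition that $a'|V(G)| = 2a|V(G)|$ be even is automatic; and $a' = 2a \ge 2$ holds since $a \ge 1$. Theorem~\ref{thm:a-a+2} therefore supplies a connected $\{2a, 2a+2\}$-factor $H$ of $G$.

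Since $H$ is connected and every vertex of $H$ has even degree, $H$ is Eulerian. Any Eulerian circuit of $H$ is a spanning closed trail of $G$ that visits each vertex $v$ exactly $d_H(v)/2 \in \{a, a+1\}$ times, which is the desired trail. There is essentially no obstacle here: the entire content is already packaged in Theorem~\ref{thm:a-a+2}, and the only thing to check is the translation from ``closed trail visiting $v$ some $a$ or $a+1$ times'' to ``connected $\{2a,2a+2\}$-factor''.
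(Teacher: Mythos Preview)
Your proposal is correct and follows exactly the paper's argument: apply Theorem~\ref{thm:a-a+2} with parameter $2a$ to obtain a connected $\{2a,2a+2\}$-factor, which is Eulerian and whose Eulerian circuit is the required spanning closed trail. You have simply spelled out the hypothesis check and the trail/Eulerian-factor correspondence in more detail than the paper does.
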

\begin{proof}
{By Theorem~\ref{thm:a-a+2}, the graph $G$ has a connected $\{2a,2a+2\}$-factor and so it admits a 
 spanning closed trail meeting each vertex $a$ or $a+1$ times.
}\end{proof}
%
%
%
%
%
%
%
%
%
%


\begin{thebibliography}{10}

\bibitem{Chvatal-1973}
 V.~Chv\'atal, Tough graphs and {H}amiltonian circuits, Discrete Math. 5 (1973)~215--228.

\bibitem{Edmonds-1970}
 J.~Edmonds, Submodular functions, matroids, and certain polyhedra, in Combinatorial {S}tructures and their {A}pplications ({P}roc. {C}algary {I}nternat. {C}onf., {C}algary, {A}lta., 1969), Gordon and Breach, New York, 1970, pp.~69--87.

\bibitem{Ellingham-Nam-Voss-2002}
 M.N. Ellingham, Y.~Nam, and H.-J. Voss, Connected {$(g,f)$}-factors, J. Graph Theory 39 (2002)~62--75.

\bibitem{Ellingham-Zha-2000}
 M.N. Ellingham and X.~Zha, Toughness, trees, and walks, J. Graph Theory 33 (2000)~125--137.

\bibitem{Enomoto-Jackson-Katerinis-Saito-1985}
 H.~Enomoto, B.~Jackson, P.~Katerinis, and A.~Saito, Toughness and the existence of {$k$}-factors, J. Graph Theory 9 (1985)~87--95.

\bibitem{ClosedWalks}
 M.~Hasanvand, Spanning trees and spanning closed walks with small degrees, arXiv:1702.06203v7.

\bibitem{ClosedTrails}
 M.~Hasanvand, Spanning tree-connected subgraphs and spanning closed trails with small degrees, arXiv:2205.05044.

\bibitem{iso-tough}
 M.~Hasanvand, Factors and connected factors in tough graphs with high isolated toughness, arXiv:1812.11640v3.

\bibitem{ModuloFactorBounded}
M. Hasanvand, Modulo factors with bounded degrees, arXiv:2205.09012.

\bibitem{complementary}
 M.~Hasanvand, Highly tree-connected complementary modulo factors with bounded degrees, 2205.09715.

\bibitem{Katerinis-1990}
P.~Katerinis, Toughness of graphs and the existence of factors,  Discrete Math. 80 (1990)~81--92.

\bibitem{Nash-Williams-1961}
C.St.J.A. Nash-Williams, Edge-disjoint spanning trees of finite graphs, J. London Math. Soc. 36 (1961)~445--450.

\bibitem{Tutte-1961}
W.T. Tutte, On the problem of decomposing a graph into {$n$} connected factors, J. London Math. Soc. 36 (1961)~221--230.

\bibitem{Win-1989}
 S.~Win, On a connection between the existence of {$k$}-trees and the toughness of a graph, Graphs Combin. 5 (1989)~201--205.

\end{thebibliography}
\end{document}